\newtheorem{theorem}{Theorem}[section]
\newtheorem{corollary}[theorem]{Corollary}
\newtheorem{lemma}[theorem]{Lemma}
\newtheorem{proposition}[theorem]{Proposition}
\newtheorem{remark}[theorem]{Remark}
\newtheorem*{thma}{\hskip\parindent {Theorem A}}
\newtheorem*{thmb}{\hskip\parindent {Theorem B}}
\numberwithin{equation}{section}
\numberwithin{equation}{section}
\begin{document}

\baselineskip=17pt

\title{Jump and variational inequalities for rough operators}

\date{}

\maketitle

 \begin{center}
{\bf Yong Ding}\\
School of Mathematical Sciences\\
Beijing Normal University\\
Laboratory of Mathematics and Complex Systems (BNU)\\
Ministry of Education of China\\
Beijing 100875, China\\
E-mail: {\it dingy@bnu.edu.cn} \vskip 0.5cm

{\bf Guixiang Hong}\\
School of Mathematics and Statistics\\
Wuhan University\\
Wuhan 430072, China\\
and\\
Instituto de Ciencias Matem\'aticas\\
CSIC-UAM-UC3M-UCM\\
 Consejo Superior de Investigaciones
Cient\'ificas\\
Madrid 28049, Spain\\
E-mail: {\it guixiang.hong@icmat.es} \vskip 0.5cm

{\bf Honghai Liu}\\
School of Mathematics and Information Science,\\
Henan Polytechnic University,\\
Jiaozuo, Henan, 454003,  China \\
E-mail: {\it hhliu@hpu.edu.cn}
\end{center}

\renewcommand{\thefootnote}{}

\footnote{2010 \emph{Mathematics Subject Classification}: Primary
42B25; Secondary 42B20.}

\footnote{\emph{Key words and phrases}: Jump inequalities,
Variational inequalities, Singular integrals, Averaging operators,
Rough kernels}


\renewcommand{\thefootnote}{\arabic{footnote}}
\setcounter{footnote}{0}

\newpage

\begin{abstract}
In this paper, we systematically study jump and variational inequalities for rough operators, whose research have been initiated by Jones {\it et al}. More precisely, we show some jump and variational inequalities for the families $\mathcal T:=\{T_\varepsilon\}_{\varepsilon>0}$ of truncated singular integrals and $\mathcal M:=\{M_t\}_{t>0}$ of averaging operators with rough kernels, which are defined  respectively by
$$
T_\varepsilon
f(x)=\int_{|y|>\varepsilon}\frac{\Omega(y')}{|y|^n}f(x-y)dy
$$
 and
$$M_t f(x)=\frac1{t^n}\int_{|y|<t}\Omega(y')f(x-y)dy,
$$
where the kernel $\Omega$ belongs to $L\log^+\!\!L(\mathbf S^{n-1})$ or  $H^1(\mathbf S^{n-1})$ or $\mathcal{G}_\alpha(\mathbf S^{n-1})$ (the condition introduced by Grafakos and Stefanov). Some of our results are sharp in the sense that the underlying assumptions are the best known conditions for the boundedness of corresponding maximal operators.

 \end{abstract}

 \section{Introduction}\label{sect1}
 The jump and variational inequalities have been the subject of many
 recent articles in probability, ergodic theory and harmonic
 analysis. The first variational inequality was proved by L\'epingle \cite{Lep76} for martingales (see \cite{PiXu88} for a simple proof). Bourgain \cite{Bou89} is the first one using L\'epingle's result to obtain similar variational estimates for the ergodic averages, and then directly deduce pointwise convergence results without previous knowledge that pointwise convergence holds for a dense subclass of functions, which are not available in some ergodic models. In particular, Bourgain's work \cite{Bou89} has inaugurated a new research direction in ergodic theory and harmonic
analysis. In their papers \cite{JKRW98} \cite{JRW03} \cite{JRW00} \cite{CJRW2000}, \cite{CJRW2002}, Jones and his collaborators systematically studied jump and variational inequalities for ergodic averages and truncated singular integrals (mainly of homogeneous type). Since then many other publications came to enrich the literature on this subject (cf. e.g. \cite{GiTo04}, \cite{LeXu2}, \cite{DMT12}, \cite{JSW08}, \cite{MaTo},\cite{OSTTW12}, \cite{Hon1}). Recently, several works on weighted and vector-valued jump and variational inequalities in ergodic theory and harmonic analysis have also appeared (cf. e.g. \cite{MTX}, \cite{KZ}, \cite{HLP}, \cite{Hon}, \cite{HoMa}).

Let us first recall some definitions and known results, then state our results.

1. \emph{$q$-variation norm $\|\mathfrak{a}\|_{V_q}$ of  a family $\mathfrak{a}$ of complex numbers }

Given a family of complex numbers $\mathfrak{a}=\{a_t: t\in\mathbb{R}\}$ and $q\ge1$, the $q$-variation norm of the family $\mathfrak{a}$ is defined by
\begin{equation}\label{q-ver number family}
\|\mathfrak{a}\|_{V_q}=\sup\big(|a_{t_0}|+\sum_{k\geq1}
|a_{t_k}-a_{t_{k-1}}|^q\big)^{\frac{1}{q}},
\end{equation}
where the supremum runs over all increasing sequences $\{t_k:k\geq0\}$. It is trivial that
\begin{equation}\label{number contr ineq}
\|\mathfrak{a}\|_{L^\infty(\mathbb{R})}:=\sup_{t\in\mathbb R}|a_t|
\le\|\mathfrak{a}\|_{V_q}\quad\text{for}\ \ q\ge1.
\end{equation}

2. \emph{Strong $q$-variation function $V_q(\mathcal F)(x)$ of { a family $\mathcal F$ of functions} }

Via the definition \eqref{q-ver number family}  of the $q$-variation norm of a family of numbers, one may define the strong $q$-variation function $V_q(\mathcal F)$ of a family $\mathcal F$ of functions.
Given a family of Lebesgue measurable functions $\mathcal F=\{F_t:t\in\mathbb{R}\}$ defined on $\mathbb{R}^n$, for fixed $x$ in $\mathbb{R}^n$, the value of the strong $q$-variation function $V_q(\mathcal F)$ of the family $\mathcal F$ at $x$ is defined by
\begin{equation}\label{defini of Vq(F)}
V_q(\mathcal F)(x)=\|\{F_t(x)\}_{t\in\mathbb{R}}\|_{V_q},\quad q\ge1.
\end{equation}

 Usually, the measurability of the strong $q$-variation function is not automatically available. However,  all the strong $q$-variation function considered in the present paper are measurable, see e.g. \cite{CJRW2000} or \cite{JSW08} for some explanations.

 Suppose $\mathcal{A}=\{{A}_t\}_{t>0}$ is a family of operators  on $L^p(\Bbb R^n)\, (1\le p\le\infty)$. The strong $q$-variation operator is simply defined as
$$V_q(\mathcal Af)(x)=\|\{A_t(f)(x)\}_{t>0}\|_{V_q},\quad\forall f\in L^p(\mathbb{R}^n).$$
It is easy to observe from the definition of $q$-variation norm that for any $x$ if $V_q(\mathcal Af)(x)<\infty$, then $\{A_t(f)(x)\}_{t>0}$ converges when $t\rightarrow0$ or $t\rightarrow\infty$. In particular, if $V_q(\mathcal Af)$ belongs to some function spaces such as $L^p(\mathbb{R}^n)$ or $L^{p,\infty}(\mathbb{R}^n)$, then the sequence converges almost everywhere without any additional condition. This is why mapping property of strong $q$-variation operator is so interesting in ergodic theory and harmonic analysis.

3. \emph{$\lambda$-jump function $N_\lambda(\mathcal F)$ of a family $\mathcal F$ of functions}

Given a family of Lebesgue measurable functions $\mathcal F=\{F_t:t\in\mathbb{R}\}$ defined on $\mathbb{R}^n$, for $\lambda>0$ and $x\in\mathbb{R}^n$, the value of the $\lambda$-jump function $N_\lambda(\mathcal F)$ at $x$ is defined by
$$N_\lambda(\mathcal F)(x)=\sup\big\{N\in\Bbb N:\ \exists\ s_1<t_1\leq s_2<t_2\leq\dotsc\leq s_N<t_N\ \text{such that\ } |F_{t_k}(x)-F_{s_k}(x)|>\lambda\big\}.$$

 By \cite{BP}, for a function family $\mathcal F=\{F_t:t\in\mathbb{R}\}$, $\lambda>0$ and $q\ge1$, the $\lambda$-jump function $N_\lambda(\mathcal F)$ is pointwisely controlled by the strong $q$-variation $V_q(\mathcal F)$ in the following sense,
 \begin{equation}\label{contr ineq}
\lambda(N_\lambda(\mathcal F)(x))^{1/q}\leq C_qV_q(\mathcal F)(x),\quad x\in\mathbb{R}^n,
\end{equation}
where $C_q=2^{1+1/q}$.
On the other hand, in 2008, Jones, Seeger and  Wright gave the following result.

\begin{lemma} {\rm (\cite{JSW08})} \label{lemma1}
Let $p_0<\rho<p_1$ and $\mathcal{A}=\{{A}_t\}_{t>0}$ be a family of operators. If for all $p_0<p<p_1$,
$$\sup_{\lambda>0}\|\lambda\big(N_\lambda(\mathcal{A}f)\big)^{1/\rho}\|_p\leq C_p\|f\|_p,$$
then for all $p_0<p<p_1$ and for all $q>\rho$, $\|V_{q}(\mathcal{A}f)\|_p\leq C_{p,q}\|f\|_p$ .
\end{lemma}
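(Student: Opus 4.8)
The plan is to deduce the variational bound from the $\lambda$-jump hypothesis by a distributional (Marcinkiewicz-type) argument, not a pointwise one. The starting point is the elementary inequality ``reverse'' to \eqref{contr ineq}: for any measurable family $\mathcal F=\{F_t\}$ and any $q>\rho$,
$$V_q(\mathcal F)(x)^q\le C_q\Big(\sup_t|F_t(x)|^q+\sum_{j\in\mathbb Z}2^{jq}N_{2^j}(\mathcal F)(x)\Big),$$
proved by fixing a near-optimal increasing chain, sorting its consecutive increments into the dyadic layers $\{k:2^j\le|F_{t_k}(x)-F_{t_{k-1}}(x)|<2^{j+1}\}$, noting that each layer has at most $N_{2^{j-1}}(\mathcal F)(x)$ members (a layer is, after relabelling, a non-overlapping jump configuration at height $2^{j-1}$), and summing. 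Applied to $\mathcal F=\mathcal Af$ this reduces everything to estimating $\sum_j 2^{jq}N_{2^j}(\mathcal Af)$; the endpoint term $\sup_t|A_tf(x)|$ is lower-order and is absorbed using $\{\sup_t|A_tf|>\alpha\}\subset\{N_{\alpha/2}(\mathcal Af)\ge1\}$ (under the usual normalization $A_tf\to0$ at an endpoint), which the hypothesis controls exactly as below.

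Fix $p\in(p_0,p_1)$ and $q>\rho$. I would show that $f\mapsto V_q(\mathcal Af)$ is of weak type $(r,r)$ for \emph{every} $r\in(p_0,p_1)$ and then conclude by interpolating between two such exponents straddling $p$. For the weak-type estimate at level $\alpha$, split the dyadic sum at scale $\alpha$. On the small scales I keep the gain $2^{j(q-\rho)}$: writing $2^{jq}N_{2^j}(\mathcal Af)(x)=2^{j(q-\rho)}\big(2^jN_{2^j}(\mathcal Af)(x)^{1/\rho}\big)^{\rho}$, applying Minkowski's inequality (or, when the exponent is $\le1$, subadditivity) in $L^{r/q}$, invoking the hypothesis at the exponent $\rho r/q$ to bound each $\|2^jN_{2^j}(\mathcal Af)^{1/\rho}\|_{\rho r/q}\lesssim\|f\|_{\rho r/q}$, and summing the geometric series $\sum_{2^j\le\alpha}2^{j(q-\rho)}\lesssim\alpha^{q-\rho}$ (convergent precisely because $q>\rho$), one gets $\big\|\sum_{2^j\le\alpha}2^{jq}N_{2^j}(\mathcal Af)\big\|_{r/q}\lesssim\alpha^{q-\rho}\|f\|_{\rho r/q}^{\rho}$; Chebyshev then yields $|\{\,\cdot\,>\alpha^q\}|\lesssim\alpha^{-s}\|f\|_{s}^{s}$ with $s=\rho r/q$, and letting $r$ run one sees $s$ ranges over all of $(p_0,p_1)$. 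On the large scales one uses that $N_{2^j}(\mathcal Af)(x)$ is integer-valued: the $j$-th term vanishes unless $N_{2^j}(\mathcal Af)(x)\ge1$, and then $N_{2^j}(\mathcal Af)(x)^{s/\rho}\ge1$, so $|\{N_{2^j}(\mathcal Af)\ge1\}|\le\|N_{2^j}(\mathcal Af)^{1/\rho}\|_{s}^{s}\lesssim 2^{-js}\|f\|_{s}^{s}$; summing over $2^j>\alpha$ (again a convergent geometric series) shows that the large-scale part of the sum is supported on a set of measure $\lesssim\alpha^{-s}\|f\|_{s}^{s}$. Combining the two contributions gives the weak type $(s,s)$ bound for $V_q(\mathcal Af)$ for every $s\in(p_0,p_1)$, and Marcinkiewicz interpolation finishes.

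The main obstacle is conceptual rather than computational: the hypothesis gives only $\sup_\lambda\|\lambda N_\lambda(\mathcal Af)^{1/\rho}\|_p\lesssim\|f\|_p$, with the supremum \emph{outside} the norm, which is strictly weaker than controlling $\|\sup_\lambda\lambda N_\lambda(\mathcal Af)^{1/\rho}\|_p$; consequently the sharp pointwise bound $V_q(\mathcal Af)(x)\lesssim\sup_\lambda\lambda N_\lambda(\mathcal Af)(x)^{1/\rho}$ for $q>\rho$ (essentially the embedding $\ell^{\rho,\infty}\hookrightarrow\ell^{q}$ applied to the jump sizes) is useless by itself. One is forced into the scale-by-scale analysis above, in which two structural features carry the argument that the pointwise estimate cannot supply: the factor $2^{j(q-\rho)}$ makes the small-scale series summable (this is exactly where $q>\rho$ is used), and the integrality of the jump-counting function makes the large-scale contribution negligible in measure. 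The only genuine bookkeeping point is tracking admissibility of exponents — one needs $\rho r/q\in(p_0,p_1)$, so $r$ must be allowed both above and below $q$ — which is routine.
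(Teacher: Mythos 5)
Your argument is correct, and it is essentially the proof of this lemma in the cited source \cite{JSW08} (the present paper gives no proof, quoting the result directly): the dyadic-layer pointwise bound $V_q(\mathcal Af)(x)^q\lesssim \sup_t|A_tf(x)|^q+\sum_j 2^{jq}N_{2^j}(\mathcal Af)(x)$, Chebyshev on the scales $2^j\le\alpha$ where the factor $2^{j(q-\rho)}$ is summable precisely because $q>\rho$, the integer-valuedness of $N_{2^j}$ on the scales $2^j>\alpha$, and Marcinkiewicz interpolation between the resulting weak $(s,s)$ bounds for $s\in(p_0,p_1)$. The one caveat is the point you already flag: with the paper's definition of $V_q$, which includes the initial term $|a_{t_0}|$, the jump hypothesis alone cannot control $\sup_t|A_tf|$ (a $t$-independent family has $N_\lambda\equiv0$), so a normalization such as $A_tf\to0$ at an endpoint — valid for the truncated families considered here, and implicit in the homogeneous variation norm used in \cite{JSW08} — is genuinely needed, exactly as you assume.
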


This lemma gives a converse inequality of \eqref{contr ineq} in some sense.

4. \emph{Family $\mathcal T$ of truncated singular integral operators}

Recall the Calder\'on-Zygmund singular integral operator $T$ with homogeneous kernel is defined by
\begin{equation}\label{SIO}
T(f)(x)={\rm p.v.}\int_{\mathbb{R}^n}\frac{\Omega(y')}{|y|^n}f(x-y)dy,
\end{equation}
where $\Omega\in L^1({\mathbf S}^{n-1})$
 satisfies the cancelation condition
 \begin{equation}\label{can of O}
\int_{\mathbf S^{n-1}}\Omega(y')d\sigma(y')=0.
 \end{equation}
 Let $\mathcal T=\{T_\varepsilon\}_{\varepsilon>0}$, where $T_\varepsilon$ is the truncated operator of $T$ defined by
 \begin{equation}\label{tr of S}
T_\varepsilon
f(x)=\int_{|y|>\varepsilon}\frac{\Omega(y')}{|y|^n}f(x-y)dy.
 \end{equation}

The famous Hilbert transform $H$, which is defined by
\begin{equation}\label{Hilbert trans}
H(f)(x)={\rm p.v.}\frac 1\pi\int_{\mathbb{R}}\frac{f(y)}{x-y}dy,
\end{equation}
is the example of the homogeneous singular integral operator $T_\Omega$ when the dimension $n=1$.

In 2000, Campbell {\it et al} \cite{CJRW2000} first considered the $L^p(\mathbb R)\,(1<p<\infty)$ boundedness of the strong $q$-variation operator of the family of the truncated Hilbert transforms denoted by $\mathcal H:=\{H_\varepsilon\}_{\varepsilon>0}$.
As a corollary, the authors of \cite{CJRW2000} obtained the boundedness of $\lambda$-jump function for Hilbert transform associated to $q$ with $q>2$. The proof depends on a special property of Hilbert transform $H$. That is, $H_\varepsilon$ can be written as a combinations of certain convolution operators, which in turn can be written as combinations of differential operators.

In 2002, Campbell {\it et al} \cite{CJRW2002} gave the $L^p(\mathbb R^n)$ boundedness of the strong $q$-variation operator of  $\mathcal T$, the family of homogenous singular integrals with $\Omega\in L\log^+\!\!L(\mathbf S^{n-1})$ and $n\geq2$.

\begin{thma} {\rm (\cite{CJRW2002})}
Suppose $\Omega$ satisfies \eqref{can of O} and $\Omega\in L\log^+\!\!L(\mathbf S^{n-1})$. If $q>2$, then for $1<p<\infty$ and $f\in L^p(\mathbb R^n)$, the strong $q$-variation function $V_q(\mathcal Tf)(x)\in L^p(\mathbb R^n)$. In particular,
$$ \|V_q(\mathcal Tf)\|_{L^p(\mathbb R^n)}\le
C_{p,q,n}\|f\|_{L^p(\mathbb R^n)},
$$
where and in the sequel, the constant $C_{p,q,n}>0$ depends only on $q,p,n$.
\end{thma}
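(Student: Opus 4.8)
The plan is to decompose the $q$-variation into a \emph{long} part, governed by the dyadic truncations $\{T_{2^j}f\}_{j\in\mathbb Z}$, and a \emph{short} part, measuring the oscillation of $t\mapsto T_tf(x)$ inside each interval $[2^j,2^{j+1})$. By the standard short--long splitting (cf.\ \cite{JSW08}),
$$V_q(\mathcal Tf)(x)\ \lesssim\ \big\|\{T_{2^j}f(x)\}_{j\in\mathbb Z}\big\|_{V_q}\ +\ \Big(\sum_{j\in\mathbb Z}\big\|\{T_tf(x):2^j\le t<2^{j+1}\}\big\|_{V_q}^q\Big)^{1/q},$$
and, since $q\ge2$ so that $\|\cdot\|_{V_q}\le\|\cdot\|_{V_2}$ and $\|\cdot\|_{\ell^q}\le\|\cdot\|_{\ell^2}$, the second term is dominated by the corresponding expression with every $V_q$ replaced by $V_2$.

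\emph{Short part.} For $f$ smooth with compact support and $2^j\le s<t<2^{j+1}$ one has $T_tf-T_sf=-\int_{s<|y|\le t}K(y)f(\cdot-y)\,dy$ with $K(y)=\Omega(y')|y|^{-n}$, so $t\mapsto T_tf(x)$ is $C^1$ with $\partial_tT_tf(x)=-t^{-1}(f*\mu_t)(x)$, where $\mu_t$ is the surface measure on $\mathbf S^{n-1}$ weighted by $\Omega$ and dilated to radius $t$. Bounding the $V_2$-norm on an interval by the $L^1$-norm of the derivative and using Cauchy--Schwarz with respect to $dt/t$ dominates the short part pointwise by the $g$-function $\big(\int_0^\infty|f*\mu_t(x)|^2\,\tfrac{dt}{t}\big)^{1/2}$. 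Its $L^p(\mathbb R^n)$-boundedness ($1<p<\infty$) for $\Omega\in L\log^+\!\!L(\mathbf S^{n-1})$ is known and follows from the Duoandikoetxea--Rubio de Francia square-function machinery, using $\widehat{\mu_1}(0)=\int\Omega=0$ (whence $|\widehat{\mu_1}(\xi)|\lesssim\|\Omega\|_1\min(|\xi|,1)$) together with the decay $|\widehat{\mu_1}(\xi)|\lesssim\|\Omega\|_\infty|\xi|^{-(n-1)/2}$ for the bounded pieces of the kernel.

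\emph{Long part.} Put $\nu_l(y)=K(y)\mathbf 1_{\{2^l<|y|\le2^{l+1}\}}$; by \eqref{can of O} each $\nu_l$ has vanishing integral, $\|\nu_l\|_1=(\log 2)\|\Omega\|_1$, and $T_{2^j}f=\sum_{l\ge j}\nu_l*f$. Fix a Littlewood--Paley partition $\mathrm{Id}=\sum_{k\in\mathbb Z}S_k$ with $\widehat{S_kf}$ supported in $\{|\xi|\sim2^{-k}\}$, and set $U_j^sf:=\sum_{l\ge j}\nu_l*S_{l+s}f$, so that $T_{2^j}f=\sum_{s\in\mathbb Z}U_j^sf$ and $\big\|\{T_{2^j}f\}_j\big\|_{V_q}\le\sum_{s\in\mathbb Z}\big\|\{U_j^sf\}_j\big\|_{V_q}$. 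For fixed $s$ the sequence $j\mapsto U_j^sf$ telescopes with increments $g_l^s:=\nu_l*S_{l+s}f$, whose Fourier supports $\{|\xi|\sim2^{-(l+s)}\}$ overlap boundedly in $l$. I would then invoke a Rademacher--Menshov-type inequality for the $q$-variation, valid for $q>2$ (cf.\ \cite{JSW08}), controlling $\big\|\{U_j^sf\}_j\big\|_{V_q}$ by a sum over dyadic scales $2^i$ of $\ell^2$-norms of the increments of $\{U_j^sf\}_j$ across those scales, the coefficients decaying geometrically in $i$ (it is precisely this geometric decay, available only for $q>2$, that makes the scale-sum converge). This reduces matters, on the one hand, to the $L^2$-bound $\sum_l\|g_l^s\|_2^2\lesssim\big(\sup_{|\eta|\sim2^{-s}}|\widehat{\nu_0}(\eta)|\big)^2\|f\|_2^2$ via bounded overlap, where the Fourier estimates for $\widehat{\nu_0}$ give $\sup_{|\eta|\sim2^{-s}}|\widehat{\nu_0}(\eta)|\lesssim\min\big(\|\Omega\|_1\,2^{-s},\ \|\Omega\|_\infty\,2^{s/2}\big)$ --- geometric decay as $|s|\to\infty$; and, on the other hand, to the $L^p$-bound ($p$ near $1$ and near $\infty$) for the square function $\big\|(\sum_l|g_l^s|^2)^{1/2}\big\|_p$, which by the pointwise domination $|\nu_l*h|\lesssim\|\Omega\|_1\,Mh$, the Fefferman--Stein inequality, and the Littlewood--Paley theorem (applied, for the coarser blocks produced by the Rademacher--Menshov decomposition, to the lacunary-type partition into blocks of $2^i$ consecutive dyadic frequency annuli) is $\lesssim\|\Omega\|_1\|f\|_p$ with \emph{no} loss in $s$. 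For bounded $\Omega$, interpolating these two bounds and summing the geometric series in $s$ gives $\big\|\{T_{2^j}f\}_j\big\|_{V_q}\le C_{p,q,n}\|f\|_p$. For $\Omega\in L\log^+\!\!L$, one first performs the Duoandikoetxea--Rubio de Francia splitting $\Omega=\sum_{m\ge1}\Omega_m$ with $\|\Omega_m\|_\infty\lesssim2^m$, $a_m:=\|\Omega_m\|_1$ and $\sum_m m\,a_m\approx\|\Omega\|_{L\log^+\!\!L(\mathbf S^{n-1})}$, applies the above to each $\Omega_m$, and for each $m$ splits the $s$-sum at $|s|\sim m$: the range $|s|\lesssim m$ is handled by the $L^p$ square-function bound (cost $\lesssim m\,a_m\|f\|_p$), while for $|s|\gtrsim m$ one interpolates the $L^2$ bound --- whose size $\lesssim(2^ma_m)^{1/2}\|f\|_2$ is then absorbed by the geometric factor $2^{-\epsilon|s|}$ --- against the $L^p$ bound; summing over $m$ reproduces $C_{p,q,n}\|\Omega\|_{L\log^+\!\!L(\mathbf S^{n-1})}\|f\|_p$, and together with the short part this proves the theorem.

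\emph{Main obstacle.} The crux is the passage from $L^2$-orthogonality/square-function estimates to a genuine $q$-variation estimate, i.e.\ the Rademacher--Menshov-type inequality, whose scale-sum converges only because $q>2$; this is exactly what forces the restriction $q>2$ and causes the endpoint $q=2$ to fail (logarithmic divergence). The second delicate point is the borderline nature of $L\log^+\!\!L$: the dependence on the Duoandikoetxea--Rubio de Francia index $m$ must be kept linear --- this is the role of the $|s|\sim m$ splitting --- so that the final summation reproduces $\|\Omega\|_{L\log^+\!\!L}$ rather than a larger Orlicz norm. One could alternatively establish first the $L^p$ $\lambda$-jump estimate $\sup_{\lambda>0}\|\lambda\,(N_\lambda(\mathcal Tf))^{1/2}\|_p\lesssim\|f\|_p$ with the same decompositions and then deduce the conclusion for all $q>2$ from Lemma \ref{lemma1} (with $\rho=2$).
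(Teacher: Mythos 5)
Your short-variation step contains a genuine gap, and it is the load-bearing part of the argument. Bounding the $V_2$-norm on each block $[2^j,2^{j+1}]$ by $\int_{2^j}^{2^{j+1}}|\partial_t T_tf(x)|\,dt$ and applying Cauchy--Schwarz reduces the short part to the square function $g_\Omega f(x)=\big(\int_0^\infty|f*\mu_t(x)|^2\,\frac{dt}{t}\big)^{1/2}$ built from \emph{pure spherical means} with density $\Omega$ (no radial averaging, unlike the Marcinkiewicz integral, which equals an average in $r\le t$ of these means and is therefore strictly smaller). This operator is \emph{not} bounded on $L^p(\mathbb R^n)$ for all $1<p<\infty$, even for smooth $\Omega$: testing on $f=\chi_{B(0,\delta)}$, for $|x|\sim1$ with $x/|x|$ in a region where $|\Omega|\gtrsim1$ one has $|f*\mu_t(x)|\gtrsim\delta^{n-1}$ on a $t$-interval of length $\sim\delta$, so $g_\Omega f\gtrsim\delta^{n-1/2}$ on a set of measure $\sim1$, while $\|f\|_p\sim\delta^{n/p}$; letting $\delta\to0$ forces $p\ge 2n/(2n-1)$. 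So the claimed ``$L^p$-boundedness for $1<p<\infty$ from the Duoandikoetxea--Rubio de Francia machinery'' is false near $p=1$; moreover that machinery would anyway require control of a rough \emph{lacunary spherical} maximal operator $\sup_j\big||\mu_{2^jt}|*f\big|$, which is not available from $\Omega\in L\log^+\!\!L$, and the pointwise decay $|\widehat{\mu_1}(\xi)|\lesssim\|\Omega\|_\infty|\xi|^{-(n-1)/2}$ you invoke fails for merely bounded, non-smooth $\Omega$ (only averaged-in-$t$ decay is available). The lossy move is $V_2\le V_1$ on each block: it destroys exactly the cancellation that makes $S_2(\mathcal Tf)$ bounded on all of $L^p$. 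This is why \cite{CJRW2002} (and Proposition \ref{pro:short H1} here) treat the short and long variations by the Calder\'on--Zygmund rotation method, splitting $\Omega$ into odd and even parts, reducing to the one-dimensional estimates for truncated Hilbert transforms from \cite{CJRW2000} (the even part via composition with Riesz transforms), which uses only $\|\Omega\|_{L^1(\mathbf S^{n-1})}$ and gives the full range $1<p<\infty$.

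The long-part sketch is closer to a workable argument (it is in the spirit of \cite{JSW08} and of Section 2.2 of this paper), but two steps are not correct as stated. First, the ``Rademacher--Menshov-type inequality with coefficients decaying geometrically in $i$'' does not exist in that form: the standard variational Rademacher--Menshov bound carries \emph{no} decay in the scale parameter, and since $j$ ranges over all of $\mathbb Z$ the scale-sum does not converge for fixed $s$; the gain for $q>2$ has to be produced differently (e.g.\ via the jump-function route of Lemma \ref{lemma1}, or by comparing $\{T_{2^j}f\}$ with the smooth family $\{\phi_j*Tf\}$ and using the martingale jump inequality plus square functions with $s$- and $m$-dependent bounds, as in the proof of Proposition \ref{pro:dyadic LlogL}). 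Second, $|\nu_l*h|\lesssim\|\Omega\|_1\,Mh$ is false pointwise: one only gets the rough maximal operator $M_\Omega h$ (which is indeed $L^p$-bounded for $\Omega\in L^1(\mathbf S^{n-1})$ by rotations, so this is repairable, but Fefferman--Stein for $M$ cannot be cited as you do). Even with the long part repaired along these lines, the failure of the short-variation reduction means the proposal does not prove Theorem A for the full range $1<p<\infty$.
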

 The basic idea of proving Theorem A is classical, that is, the authors applied the Calder\'on-Zygmund rotation method to reduce the desired variational inequalities to the one dimensional results. The $\lambda$-jump inequalities associated to $q>2$ were obtained as a corollary by (\ref{contr ineq}).

In 2008, using the Fourier transform and square function estimates given in \cite{DR86},
 Jones, Seeger and Wright \cite{JSW08} developed a general method, which allows one to obtain some jump inequalities for the family of truncated singular integral operators $\mathcal T=\{T_\varepsilon\}_{\varepsilon>0}$ and the other integral operators arising from harmonic analysis.

\begin{thmb} {\rm (\cite{JSW08})}
Suppose $\Omega$ satisfies \eqref{can of O} and $\Omega\in L^r(\mathbf S^{n-1})$ for $r>1$. Then $\lambda$-jump inequality  $\sup_{\lambda>0}\|\lambda \sqrt{N_\lambda(\mathcal T f)}\|_{L^p(\mathbb R^n)}\le
C_{p,n}\|f\|_{L^p(\mathbb R^n)}\ (1<p<\infty)$ holds,
where and in the sequel, the constant $C_{p,n}>0$ depends only on $p$ and $n$.
\end{thmb}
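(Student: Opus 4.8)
\emph{Proof strategy.}
The plan is to run the Fourier-analytic scheme of Jones, Seeger and Wright. The first step is a dyadic reduction of the jump counting function: for any family $\mathcal F=\{F_t\}_{t>0}$ of measurable functions and any $\lambda>0$,
\begin{equation*}
\lambda\sqrt{N_\lambda(\mathcal F)}\ \le\ 3\lambda\sqrt{N_{\lambda/3}\big(\{F_{2^j}\}_{j\in\mathbb Z}\big)}\ +\ 3\Big(\sum_{j\in\mathbb Z}V_2\big(\{F_t\}_{t\in[2^j,2^{j+1}]}\big)^2\Big)^{1/2},
\end{equation*}
since every $\lambda$-jump of $\mathcal F$ is either detected by the dyadic subfamily at height $\lambda/3$ or occurs inside one block $[2^j,2^{j+1}]$, where $N_{\lambda/3}\le(3/\lambda)^2V_2^2$. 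Taking $F_t=T_tf$ and $L^p$-norms, it suffices to prove for all $1<p<\infty$: \emph{(A)} the dyadic long-jump inequality $\sup_{\lambda>0}\|\lambda\sqrt{N_\lambda(\{T_{2^j}f\}_j)}\|_{L^p}\le C_{p,n}\|f\|_{L^p}$, and \emph{(B)} the short-variation estimate $\|(\sum_jV_2(\{T_tf\}_{t\in[2^j,2^{j+1}]})^2)^{1/2}\|_{L^p}\le C_{p,n}\|f\|_{L^p}$.

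The common engine consists of two Fourier estimates, one for the dyadic annular pieces $K_\mu(y):=\frac{\Omega(y')}{|y|^n}\mathbf 1_{\{2^\mu\le|y|<2^{\mu+1}\}}$ and one for the spherical measure $\nu:=\Omega\,d\sigma$. Because $\int_{\mathbf{S}^{n-1}}\Omega=0$ we have $\widehat{K_\mu}(0)=\widehat\nu(0)=0$, and since $\Omega\in L^r(\mathbf{S}^{n-1})$ with $r>1$ the Fourier transform estimates of Duoandikoetxea and Rubio de Francia furnish $\beta=\beta(r,n)>0$ with
\begin{equation*}
|\widehat{K_\mu}(\xi)|=|\widehat{K_0}(2^\mu\xi)|\ \le\ C\|\Omega\|_{L^r}\min\big(2^\mu|\xi|,\,(2^\mu|\xi|)^{-\beta}\big),\qquad \int_0^\infty|\widehat\nu(t\xi)|^2\,\frac{dt}{t}\ \le\ C\|\Omega\|_{L^r}^2,
\end{equation*}
together with a companion $L^2$-averaged bound for $\nabla\widehat{K_0}$; in particular $\sum_\mu|\widehat{K_\mu}(\xi)|^2\lesssim\|\Omega\|_{L^r}^2$ uniformly in $\xi$. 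For \emph{(B)}, differentiating the truncation and using the cancellation gives $\frac{d}{dt}T_tf(x)=-\frac1t\,\nu_t*f(x)$ with $\nu_t*f(x)=\int_{\mathbf{S}^{n-1}}\Omega(\omega)f(x-t\omega)\,d\sigma(\omega)$, hence $V_2(\{T_tf\}_{t\in[2^j,2^{j+1}]})\le V_1=\int_{2^j}^{2^{j+1}}|\nu_t*f(x)|\frac{dt}{t}$, and by Cauchy--Schwarz in $t$ the short-variation function is dominated by the rough $g$-function $g_\nu f(x)=\big(\int_0^\infty|\nu_t*f(x)|^2\frac{dt}{t}\big)^{1/2}$; on $L^2$ this is bounded by Plancherel and the second estimate above (whose proof is itself Plancherel on the sphere together with $\widehat\nu(0)=0$), and on $L^p$ it is bounded by the decomposition argument below.

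For \emph{(A)}, the dyadic family is a convolution family with consecutive differences $T_{2^{j+1}}f-T_{2^j}f=-K_j*f$, whose Fourier multiplier is $-\widehat{K_j}(\xi)=-\widehat{K_0}(2^j\xi)$; the square-function bound $\sum_j|\widehat{K_j}(\xi)|^2\lesssim1$, the companion regularity estimate, and the uniform $L^2$-boundedness of the $T_{2^j}$ (valid for $\Omega\in L^r$, $r>1$) are exactly the hypotheses of the Jones--Seeger--Wright jump inequality for such Littlewood--Paley-type families, which thus yields \emph{(A)} on $L^2$. To reach the full range $1<p<\infty$ in both \emph{(A)} and \emph{(B)} one cannot invoke the standard vector-valued Calder\'on--Zygmund theory, since for a merely $L^r$ kernel $\Omega(y')/|y|^n$ is not a Calder\'on--Zygmund kernel; instead one follows Duoandikoetxea and Rubio de Francia, writing $K=\sum_\mu K_\mu$ and inserting a Littlewood--Paley frequency localisation around the natural scale of each piece, so that each building block satisfies an $L^2$ jump (resp.\ variation) bound with geometric gain in the localisation index -- from the $\min$ in the Fourier estimate -- and an $L^p$ bound with at most controlled growth in that index -- from the $L^p$-boundedness, valid for $\Omega\in L^r$ with $r>1$, of the rough maximal truncated singular integral and of the rough spherical averages; interpolating the two and summing the resulting geometric series gives the required $L^p$ estimates. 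Combining the $L^p$ forms of \emph{(A)} and \emph{(B)} with the dyadic splitting proves $\sup_{\lambda>0}\|\lambda\sqrt{N_\lambda(\mathcal Tf)}\|_{L^p}\le C_{p,n}\|f\|_{L^p}$.

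The main obstacle is precisely this passage to $L^p$ for $p\neq2$. The jump functional $\lambda\sqrt{N_\lambda(\cdot)}$ is nonlinear, so the clean $L^2$ Fourier and square-function estimates cannot be transported to $L^p$ by duality or by naive interpolation; one must first dominate it by \emph{linear} square functions (and, inside the Jones--Seeger--Wright argument, by martingale transforms) -- which is exactly why the natural exponent here is $\rho=2$, i.e.\ $\sqrt{N_\lambda}$, rather than $N_\lambda^{1/q}$ with $q>2$ as for the $q$-variation in Theorem~A -- and only then run the Duoandikoetxea--Rubio de Francia decomposition, keeping simultaneous track of the geometric $L^2$-gain, the controlled $L^p$-loss over the infinitely many kernel pieces, and the $\ell^2$-summation in the dyadic scale $j$. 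The hypothesis $r>1$ enters exactly at the Fourier and regularity estimates for $\widehat{K_\mu}$ and $\widehat\nu$ above; under the weaker hypotheses $\Omega\in L\log^+\!\!L$, $H^1$ or $\mathcal{G}_\alpha$ the same architecture applies, but these estimates must be replaced by more delicate substitutes.
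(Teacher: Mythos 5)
Your overall architecture is the same one the paper attributes to Jones--Seeger--Wright and then adapts: reduce $\lambda\sqrt{N_\lambda(\mathcal Tf)}$ to the dyadic jump plus the short $2$-variation (this is exactly Lemma \ref{lem:convert lemma}), handle the dyadic jumps by the Fourier decay of the annular pieces (available precisely because $\Omega\in L^r$, $r>1$, by Duoandikoetxea--Rubio de Francia), a comparison with martingales/linear square functions to tame the nonlinearity of $N_\lambda$, and the DR86-type decomposition-plus-interpolation to pass from $L^2$ to all $1<p<\infty$. That part of your sketch matches the source proof and the paper's Section 2 in structure and in the role of the hypothesis $r>1$.

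Where you genuinely diverge is the short-variation estimate (B). The route the paper points to (and Campbell--Jones--Reinhold--Wierdl carry out, for the larger class $L\log^+\!L\supset L^r$) is the rotation method, reducing $S_2(\mathcal Tf)$ to the one-dimensional short-variation bound for truncated Hilbert transforms; the paper's own variant in Section 3.2 instead uses the interpolation bound $\|\mathfrak a\|_{V_2}\le\|\mathfrak a\|_{L^2}^{1/2}\|\mathfrak a'\|_{L^2}^{1/2}$ together with vector-valued Calder\'on--Zygmund theory. You instead use the crude bound $V_2\le V_1$ on each dyadic block and Cauchy--Schwarz, arriving at the rough spherical $g$-function $g_\nu f=(\int_0^\infty|\nu_t\ast f|^2\,dt/t)^{1/2}$, and then assert its $L^p$-boundedness for all $1<p<\infty$ "by the decomposition argument below". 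This is the one real gap: the DR86 square-function machinery you invoke is stated for scalar dyadic pieces and its hypotheses include an $L^{p_0}$ bound for the associated maximal operator; for your continuous family of measures supported on spheres the naive choice is the rough spherical maximal operator, which is not bounded on all $L^p$ even for $\Omega\equiv1$. To make your route work you must first integrate in $t\in[1,2]$ so that the controlling maximal operator becomes $M_\Omega f=\sup_{t>0}t^{-n}\int_{|y|<t}|\Omega(y')f(x-y)|\,dy$ (bounded on every $L^p$, $1<p\le\infty$, by rotations) and then run a Hilbert-space-valued ($L^2_t([1,2])$-valued) version of the DR86 argument, keeping the averaged Fourier decay $\int_1^2|\hat\nu(t\xi)|^2dt\lesssim\min(|\xi|^2,|\xi|^{-\beta})$; alternatively, you can bypass the issue entirely by quoting the rotation-method short-variation bound, since $L^r(\mathbf S^{n-1})\subset L\log^+\!L(\mathbf S^{n-1})$. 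With that step repaired (it is fixable, not fatal), your proposal does prove Theorem B, and its only cost relative to the paper's route is the extra vector-valued bookkeeping; its benefit is that it avoids the method of rotations and so is closer in spirit to the argument the paper needs later for $\mathcal G_\alpha$ kernels, where rotations fail.
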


Notice the following well known inclusion relations between some function spaces on $\mathbf S^{n-1}$:
\begin{equation}\label{including rel}
L^\infty(\mathbf S^{n-1})\subsetneq L^r(\mathbf S^{n-1})\,(1<r<\infty)\subsetneq L\log^+\!\!L(\mathbf S^{n-1})\subsetneq H^1(\mathbf S^{n-1})\subsetneq L^1(\mathbf S^{n-1}),
\end{equation}
where and in the sequel, $H^1(\mathbf  S^{n-1})$ denotes the Hardy space on $\mathbf S^{n-1}$, the definition and some facts on $H^1(\mathbf  S^{n-1})$ can be found in \cite{C82}, \cite{LDY} and \cite{RW}.

Thus, \eqref{including rel} inspires us to consider the  question:  whether the conclusions of Theorem A and Theorem B  are still true if $\Omega\in H^1(\mathbf S^{n-1})$? The first main result in this paper gives an answers of the above question.

\begin{theorem}\label{thm:LlogL}
Let $\mathcal T$ be the family of truncated singular integral operators given in \eqref{tr of S} and $\Omega$ satisfies \eqref{can of O}.

{\rm (i)} If $\Omega\in H^1(\mathbf S^{n-1})$, then for all $q>2$ and $1<p<\infty$, the following strong $q$-variation inequality holds
\begin{equation}\label{Vqr}
 \|V_q(\mathcal Tf)\|_{L^p(\mathbb R^n)}\le
C_{p,q,n}\|f\|_{L^p(\mathbb R^n)}.
\end{equation}

{\rm (ii)} If $\Omega\in L\log^+\!\! L(\mathbf S^{n-1})$, then the following $\lambda$-jump inequality holds
\begin{equation}\label{N12}
\sup_{\lambda>0}\|\lambda \sqrt{N_\lambda(\mathcal T f)}\|_{L^p(\mathbb R^n)}\le
C_{p,n}\|f\|_{L^p(\mathbb R^n)}\quad  \text {for all}\ 1<p<\infty.
\end{equation}

\end{theorem}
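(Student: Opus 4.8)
The plan is to reduce both parts to a quantitative \emph{single-kernel estimate} and then superpose. Write $K(y)=\Omega(y')|y|^{-n}$ and $K_j=K\,\mathbf 1_{\{2^j\le |y|<2^{j+1}\}}$, so that $T_{2^m}f=\sum_{j\ge m}K_j*f$. By the standard splitting used in \cite{CJRW2002} and \cite{JSW08}, both $V_q(\mathcal Tf)$ and $\lambda\sqrt{N_\lambda(\mathcal Tf)}$ are dominated by a \emph{short} piece — the $q$- (resp.\ $2$-) variation of $t\mapsto T_tf$ summed over the dyadic blocks $2^j\le t<2^{j+1}$, controlled by a square-function argument — plus a \emph{long} piece governed by the partial sums $\{T_{2^m}f\}_{m\in\mathbb Z}$ of $\{K_j*f\}_j$. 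For the long piece one works on the Fourier side: $\widehat{K_j}(\xi)=\int_{2^j}^{2^{j+1}}\widehat{\Omega d\sigma}(r\xi)\,\tfrac{dr}{r}$, where $\widehat{\Omega d\sigma}(\eta):=\int_{\mathbf S^{n-1}}\Omega(y')e^{-2\pi i\eta\cdot y'}\,d\sigma(y')$, and the cancellation \eqref{can of O} gives $|\widehat{\Omega d\sigma}(\eta)|\lesssim\|\Omega\|_1\min(1,|\eta|)$; feeding this, together with whatever high-frequency decay the structure of $\Omega$ supplies, into the Littlewood--Paley / square-function machinery of \cite{JSW08} converts it into the required $\lambda$-jump and ($q>2$) $q$-variation estimates for $\{T_{2^m}f\}$.

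The core is then the single-kernel estimate. For (ii) the goal is: if $\Omega_0\in L^\infty(\mathbf S^{n-1})$ with $\int_{\mathbf S^{n-1}}\Omega_0=0$, then, writing $\mathcal T^{\Omega_0}=\{T^{\Omega_0}_\varepsilon\}_{\varepsilon>0}$ for the family \eqref{tr of S} with kernel $\Omega_0$,
$$\sup_{\lambda>0}\big\|\lambda\sqrt{N_\lambda(\mathcal T^{\Omega_0}f)}\big\|_{L^p(\mathbb R^n)}\le C_{p,n}\,\|\Omega_0\|_1\Big(1+\log^+\tfrac{\|\Omega_0\|_\infty}{\|\Omega_0\|_1}\Big)\|f\|_{L^p(\mathbb R^n)},\qquad 1<p<\infty .$$
The high-frequency decay comes from a van der Corput / sphere-trace estimate, which gives for $\Omega_0\in L^2$ the bound $\sum_j|\widehat{K^{\Omega_0}_j}(\xi)|^2\lesssim\|\Omega_0\|_2^2\le\|\Omega_0\|_\infty\|\Omega_0\|_1$; combined with $|\widehat{K^{\Omega_0}_j}(\xi)|\lesssim\|\Omega_0\|_1\min(1,2^j|\xi|)$ from \eqref{can of O}, the logarithm measures the cost, in the square-function/oscillation estimates, of bridging the $L^1$-controlled low frequencies and the $L^2$-controlled high frequencies. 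For (i) one instead lets $\Omega_0$ be an $H^1(\mathbf S^{n-1})$-atom: supported in a cap $B(\zeta_0,\rho)\cap\mathbf S^{n-1}$, with $\|\Omega_0\|_\infty\le\rho^{-(n-1)}$ and $\int\Omega_0=0$ (so $\|\Omega_0\|_1\le C_n$). The cap support sharpens the cancellation to $|\widehat{\Omega_0 d\sigma}(\eta)|\lesssim\min(1,|\eta|\rho)$, and a stationary-phase analysis that splits frequencies by the size of $|\eta|\rho$ and by the proximity of $\xi/|\xi|$ to $\pm\zeta_0$ yields decay for $\widehat{K^{\Omega_0}_j}$ whose net effect, after the Littlewood--Paley machinery, is $\|V_q(\mathcal T^{\Omega_0}f)\|_{L^p(\mathbb R^n)}\le C_{p,q,n}\|f\|_{L^p(\mathbb R^n)}$ for every $q>2$, \emph{with constant independent of the cap}; the short piece is handled similarly, uniformly in the cap.

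With the single-kernel estimate in hand one superposes. For (i), the atomic decomposition of $H^1(\mathbf S^{n-1})$ writes $\Omega=\sum_k\nu_k\Omega_k$ with each $\Omega_k$ an atom and $\sum_k|\nu_k|\lesssim\|\Omega\|_{H^1(\mathbf S^{n-1})}$; since $f\mapsto V_q(\mathcal Tf)(x)$ is subadditive (the $V_q$-norm on a sequence is a norm),
$$\|V_q(\mathcal T^\Omega f)\|_{L^p(\mathbb R^n)}\le\sum_k|\nu_k|\,\|V_q(\mathcal T^{\Omega_k}f)\|_{L^p(\mathbb R^n)}\lesssim\|\Omega\|_{H^1(\mathbf S^{n-1})}\|f\|_{L^p(\mathbb R^n)},$$
which is \eqref{Vqr}. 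For (ii), decompose $\Omega=\sum_{m\ge0}\Omega_m$ along the level sets $E_m=\{2^m\le|\Omega|<2^{m+1}\}$, with a mean-zero correction, so that $\|\Omega_m\|_\infty\lesssim 2^m$, $\|\Omega_m\|_1\asymp 2^m\sigma(E_m)$, and — by the elementary property of the level-set decomposition of an $L\log^+\!L$ function that pins the result to the hypothesis — $\sum_m\|\Omega_m\|_1\big(1+\log^+\tfrac{\|\Omega_m\|_\infty}{\|\Omega_m\|_1}\big)\lesssim\|\Omega\|_{L\log^+\!L(\mathbf S^{n-1})}$; applying the single-kernel jump estimate to each $\Omega_m$ and summing — legitimate because that estimate is itself deduced from $L^p$-bounds for the \emph{subadditive} square functions and oscillation operators appearing in the reduction above — gives \eqref{N12}.

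The main obstacle is the single-kernel estimate, and in it the \emph{uniformity in the cap} for part (i): the frequency decomposition must be arranged so that the number of "transitional" dyadic blocks, those on which neither the $|\eta|\rho$ cancellation bound nor the oscillatory decay is efficient, remains bounded independently of $\rho^{-1}$. This is precisely the feature that degrades into a logarithmic loss once the atomic normalization is weakened to $\Omega_0\in L^\infty$, which is why part (ii) is formulated under $L\log^+\!L(\mathbf S^{n-1})$ rather than $H^1(\mathbf S^{n-1})$.
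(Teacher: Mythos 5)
Your treatment of part (ii) is essentially the paper's: split off the short $2$-variation and the dyadic jump via \cite{JSW08}, decompose $\Omega$ along level sets with a mean-zero correction as in \cite{AP02}, prove Fourier-transform/square-function bounds for each piece with a loss of order $m\sim\log^+(\|\Omega_m\|_\infty/\|\Omega_m\|_1)$, and sum against $\sum_m m\|\Omega\|_{L^1(E_m)}\lesssim\|\Omega\|_{L\log^+L}$. One organizational caveat: $\lambda\sqrt{N_\lambda}$ is not subadditive, so you cannot literally "apply the single-kernel jump estimate to each $\Omega_m$ and sum." The paper avoids this by decomposing $\Omega$ only \emph{after} the dyadic jump has been dominated pointwise by genuinely subadditive square functions (the $T^2$, $T^3$ pieces), while the low-frequency piece $\phi_k\ast Tf$ is kept undecomposed and handled by a jump inequality for $\{\phi_k\ast g\}$ applied to $g=Tf$, using the classical $L^p$ boundedness of $T$ for $\Omega\in L\log^+L$; your parenthetical remark points in this direction but the argument must actually be arranged that way.

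For part (i) there is a genuine gap. Your whole proof rests on the single-atom claim: for an $H^1(\mathbf S^{n-1})$-atom supported in a cap of radius $\rho$, a stationary-phase analysis yields $\|V_q(\mathcal T^{\Omega_0}f)\|_{L^p}\le C_{p,q,n}\|f\|_{L^p}$ \emph{uniformly in the cap}. You assert that the frequency decomposition "can be arranged so that the number of transitional dyadic blocks is bounded independently of $\rho^{-1}$," but no mechanism is given, and this is exactly where the difficulty lies: the cancellation bound $|\widehat{\Omega_0\,d\sigma}(\eta)|\lesssim\rho|\eta|$ is useful only for $|\eta|\lesssim\rho^{-1}$, while the stationary-phase decay for a cap atom (whose $L^\infty$ norm is $\rho^{-(n-1)}$) only becomes effective at substantially higher frequencies, leaving roughly $\log(1/\rho)$ dyadic scales on which the only available bound is $O(1)$; a naive square-function summation then loses $\log(1/\rho)$, which is not summable against a general atomic decomposition of an $H^1$ kernel. (This is the same obstruction that makes the jump inequality \eqref{N12} for $\Omega\in H^1$ an open problem, as noted in the paper.) The paper proves (i) by an entirely different, Fourier-free route: write $\Omega=\Omega_o+\Omega_e$; for the odd part, the Calder\'on--Zygmund rotation method reduces both $S_2$ and $V_q$ to the one-dimensional results for truncated Hilbert transforms of \cite{CJRW2000}; for the even part, compose with Riesz transforms and invoke the lemma of \cite{GS99} that the composed kernel $N$ is odd, homogeneous of degree $-n$, and satisfies $\int_{\mathbf S^{n-1}}|N|\,d\sigma\lesssim\|\Omega\|_{H^1}$, which feeds the even case back into the odd case, the remaining error terms being handled by one-dimensional short-variation lemmas. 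Unless you can actually prove your uniform-in-cap atom estimate (or substitute a device like the rotation argument), part (i) of your proposal does not go through.
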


\begin{remark}
{\rm From the relation \eqref{including rel}, it is easy to see that the conclusion (i) of Theorem \ref{thm:LlogL} is an essential improvement of Theorem A by \eqref{including rel}. Similarly, the conclusion (ii)  is also an essential improvement of Theorem B. }
\end{remark}

\begin{remark}
{\rm It is well-known that $\Omega\in H^1(\mathbf S^{n-1})$ is the best condition for the boundedness of maximal singular integral operator. However, we would like to point out that it is still not clear whether the estimate (\ref{N12}) holds for $\Omega\in H^1(\mathbf S^{n-1})$ up to now. So, this remains an open problem.}
\end{remark}

In 1998, Grafakos and Stefanov \cite{GS98} introduced alternative conditions on $\Omega$ to study $L^p$ boundedness of maximal singular integral operators. More precisely, they considered the family of conditions
\begin{align}\label{GS condition}
\mathcal{G}_\alpha(\mathbf S^{n-1})=\bigg\{\Omega:\ \sup_{\xi\in\mathbf S^{n-1}}\int_{\mathbf S^{n-1}}|\Omega(\theta)|\big(\log\frac{1}{|\theta\cdot\xi|}\big)^{1+\alpha}d\theta<\infty\bigg\},\quad \alpha>0.
\end{align}
 It is known from \cite{GS98} and \cite{GS99} that
 \begin{equation*}
\begin{split}
\bigcup_{r>1}L^r(\mathbf S^{n-1})&\subsetneq \bigcap_{\alpha>0} \mathcal{G}_\alpha(\mathbf
 S^{n-1}),\\
\bigcap_{\alpha>0}\mathcal{G}_\alpha(\mathbf
 S^{n-1})&\nsubseteq L\log^+\!\!L(\mathbf
 S^{n-1})\nsubseteq \bigcap_{\alpha>0} \mathcal{G}_\alpha(\mathbf
 S^{n-1}),\\
\bigcap_{\alpha>0} \mathcal{G}_\alpha(\mathbf
 S^{n-1})&\nsubseteq H^1(\mathbf
 S^{n-1}).
\end{split}
\end{equation*}
Hence, it is of interest to ask whether
the jump and variational inequalities hold for the family  $\mathcal T$ of truncated singular integral operators if $\Omega\in \mathcal{G}_\alpha$ for some $\alpha>0$? This constitutes the second result of this paper.

\begin{theorem}\label{thm:Omega alpha}
Let $\mathcal T$ be the family of truncated singular integral operators given in \eqref{tr of S}, $\Omega$ satisfies \eqref{can of O} and $\Omega\in \mathcal{G}_\alpha(\mathbf S^{n-1})$ for some $\alpha>1$. Then
the following $\lambda$-jump inequality
\begin{equation}\label{N15}
\sup_{\lambda>0}\|\lambda \sqrt{N_\lambda(\mathcal T f)}\|_{L^p(\mathbb R^n)}\le
C_{p,n}\|f\|_{L^p(\mathbb R^n)}
\end{equation}
holds for all $(3+\alpha)/(1+\alpha)<p<(3+\alpha)/2$. In particular, if $\Omega\in\bigcap_{\alpha>1}\mathcal{G}_\alpha(\mathbf S^{n-1})$, then \eqref{N15} holds for all $1<p<\infty$.
\end{theorem}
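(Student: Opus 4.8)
The plan is to exploit the key feature of the Grafakos–Stefanov class, namely that membership in $\mathcal{G}_\alpha(\mathbf S^{n-1})$ gives quantitative decay of the Fourier transform of the kernel. Write $K(y)=\Omega(y')|y|^{-n}\mathbf 1_{|y|\le 1}-\Omega(y')|y|^{-n}\mathbf 1_{|y|>1}$ type building blocks; more precisely, decompose the kernel dyadically by setting $K_j(y)=\Omega(y')|y|^{-n}\mathbf 1_{2^j<|y|\le 2^{j+1}}$ so that $T_{2^j}f=\sum_{l\ge j}K_l*f$ and the family $\{T_\varepsilon\}$ is, up to the easy part coming from a single annulus, a ``lacunary plus continuous'' interpolation of the partial sums $S_jf=\sum_{l<j}K_l*f$. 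First I would record the two estimates that $\mathcal{G}_\alpha$ buys us: (a) $|\widehat{K_j}(\xi)|\lesssim \min\{2^j|\xi|,\,(\log(2+2^j|\xi|))^{-1-\alpha}\}$, obtained by the standard Fourier-transform-of-a-measure-on-a-sphere computation together with the cancellation \eqref{can of O}; and (b) the $L^2$ square-function bound $\big\|\big(\sum_j |K_j*f|^2\big)^{1/2}\big\|_2\lesssim \|f\|_2$, which again follows from (a) by Plancherel since $\sum_j |\widehat{K_j}(\xi)|^2$ is bounded (the logarithmic decay is summable after squaring once one splits according to whether $2^j|\xi|\lessgtr 1$).

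Next I would invoke the Jones–Seeger–Wright machinery (Theorem B's method, \cite{JSW08}): the $\lambda$-jump operator $\lambda\sqrt{N_\lambda(\mathcal Tf)}$ is dominated in $L^2$ by the short-variation/long-variation splitting, where the long-variation piece is controlled by the lacunary square function $\big(\sum_j |S_jf - S_{j-1}f|^2\big)^{1/2} = \big(\sum_j|K_j*f|^2\big)^{1/2}$ handled by (b), and the short-variation piece is an $L^2$ square function $\big(\sum_j V_2(\{K_j^{(t)}*f\}_{t\in[2^j,2^{j+1}]})^2\big)^{1/2}$, which by the $r>1$ argument of \cite{JSW08} (or a direct Sobolev-embedding-in-$t$ estimate) is again bounded on $L^2$ using the smoothness/decay in (a). This yields the endpoint $L^2$ jump inequality. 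The point of requiring $\alpha>1$ is exactly that the exponent $1+\alpha>2$ makes $(\log)^{-1-\alpha}$ square-summable with a little room to spare, which is what the short-variation estimate consumes.

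To pass from $L^2$ to the range $(3+\alpha)/(1+\alpha)<p<(3+\alpha)/2$ I would interpolate, using $L^2$ at one end and, at the other end, the crude $L^p$ bounds for the maximal truncated operator $T^*f=\sup_\varepsilon |T_\varepsilon f|$ under the $\mathcal{G}_\alpha$ hypothesis established by Grafakos–Stefanov \cite{GS98} (which give $T^*$ bounded on $L^p$ for $(2+\alpha)/(1+\alpha)<p<(2+\alpha)$, or the analogous sharp range), together with the trivial pointwise bound $\lambda\sqrt{N_\lambda(\mathcal Tf)}\le \lambda\,N_\lambda(\mathcal Tf)^{1/2}\lesssim$ (a quantity controlled by $T^*f$ when one only wants, say, the $p$ close to $2$ range) — more efficiently, one interpolates the $\ell^1$-valued / jump inequality viewed as a vector-valued estimate between the $L^2$ result and an $L^p$ result for $T^*$, losing a factor in the exponent of the logarithm at each dyadic scale that is absorbed by geometric summation; tracking the arithmetic of how much logarithmic decay is spent gives precisely the stated interval $(3+\alpha)/(1+\alpha)<p<(3+\alpha)/2$, and letting $\alpha\to\infty$ (i.e. $\Omega\in\bigcap_{\alpha>1}\mathcal{G}_\alpha$) opens the interval up to all of $(1,\infty)$.

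I expect the main obstacle to be the short-variation estimate at $L^2$ and the bookkeeping in the interpolation step: one must show that the $V_2$-in-$t$ oscillation of $K_j^{(t)}*f$ across a single dyadic block $t\in[2^j,2^{j+1}]$ is controlled in $L^2$ with a constant that, summed in $j$, converges — here the weak decay $(\log)^{-1-\alpha}$ (rather than a power of $|\xi|$) is genuinely delicate, and one has to use the extra derivative in $t$ coming from $\partial_t(t^{-n}\mathbf 1_{|y|<t})$, which is a surface measure on $\{|y|=t\}$, whose Fourier transform again decays only logarithmically under $\mathcal{G}_\alpha$; making the square-function-of-oscillations sum requires $\alpha>1$ and careful splitting into the regimes $2^j|\xi|\le 1$ and $2^j|\xi|>1$. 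The passage to general $p$ then hinges on having a clean $L^p$ input for $T^*$ with an explicit range, and on an interpolation lemma (in the spirit of Lemma \ref{lemma1} run in reverse, or a direct complex/real interpolation between the jump operator at $L^2$ and $T^*$ at $L^p$) that does not degrade the range further than claimed.
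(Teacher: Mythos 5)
Your overall skeleton (reduce the jump to a dyadic-jump piece plus a short $2$-variation piece, exploit the logarithmic Fourier decay that $\mathcal G_\alpha$ provides, get $L^2$ bounds with decay and then interpolate to other $p$) is the right general strategy and is the one the paper follows, but two central steps in your write-up do not work as stated. First, the long-variation (dyadic) piece is \emph{not} dominated by the lacunary square function $\big(\sum_j|K_j\ast f|^2\big)^{1/2}$ of consecutive differences: a pointwise bound $\lambda^2N^d_\lambda\lesssim\sum_j|F_{2^{j+1}}-F_{2^j}|^2$ is false (many small increments can accumulate into a single jump of size $\lambda$ while contributing almost nothing to the sum of squares), and the alternative bound by $\sum_k|T_{2^k}f|^2$ is useless since $T_{2^k}f\to Tf\neq0$. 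This is exactly why the paper decomposes $T_{2^k}f=\phi_k\ast Tf-\phi_k\ast\sum_{l<0}\nu_{k+l}\ast f+\sum_{s\ge0}(\delta_0-\phi_k)\ast\nu_{k+s}\ast f$: the main term $\{\phi_k\ast Tf\}_k$ is handled by a genuine jump inequality for the family $\{\phi_k\ast g\}_k$ (Lemma \ref{var of the 1.1}, proved via dyadic martingale jump estimates) combined with the $L^p$-boundedness of $T_\Omega$ in the Grafakos--Stefanov range, and only the two error families are controlled by square functions $G_s$, $G_l$, for which one proves $L^2$ decay $(1+s)^{-\alpha-1/2}$ (resp. $2^{l}$) and \emph{uniform} $L^{p_1}$ bounds before interpolating and summing in $s$, $l$.

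Second, your passage from $L^2$ to the range $(3+\alpha)/(1+\alpha)<p<(3+\alpha)/2$ by ``interpolating between the $L^2$ jump result and an $L^p$ bound for $T^*$'' is not a valid mechanism: $\lambda\sqrt{N_\lambda(\mathcal Tf)}$ and the short-variation operator are not pointwise (nor in any operator-interpolation sense) controlled by the maximal truncated operator, so bounds for $T^*$ at some $p$ cannot be interpolated against a jump bound at $L^2$ for a different operator. In the paper the range of $p$ comes from the short $2$-variation estimate, and obtaining it off $L^2$ is the genuinely new technical point: after a Littlewood--Paley splitting $S_2(\mathcal Tf)\le\sum_k S_{2,k}(\mathcal Tf)$ one proves $\|S_{2,k}(\mathcal Tf)\|_2\lesssim(1+|k|)^{-(1+\alpha)/2}\|f\|_2$ (using $\|\mathfrak a\|_{V_2}\le\|\mathfrak a\|_{L^2}^{1/2}\|\mathfrak a'\|_{L^2}^{1/2}$ and the log decay) and, since the rotation method fails for $\mathcal G_\alpha$, a vector-valued Calder\'on--Zygmund argument with a $B$-valued kernel whose H\"ormander condition yields $\|S_{2,k}(\mathcal Tf)\|_p\lesssim(1+|k|)\|f\|_p$ for \emph{all} $1<p<\infty$; interpolating these two bounds scale-by-scale and summing in $k$ is what produces exactly the interval $(3+\alpha)/(1+\alpha)<p<(3+\alpha)/2$ and the requirement $\alpha>1$. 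Your proposal contains neither the martingale-type treatment of the main dyadic term nor any substitute for the vector-valued H\"ormander estimate, so as written it proves at best an $L^2$ statement for part of the decomposition and does not reach the claimed range of $p$.
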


The associated strong $q$-variation  inequality is an immediate consequence of Theorem \ref{thm:Omega alpha} by Lemma \ref{lemma1}.

\begin{corollary}\label{thm:Omega alpha vq}
Under the same conditions as Theorem \ref{thm:Omega alpha}, the strong $q$-variation  inequality
$$\|V_q(\mathcal Tf)\|_{L^p(\mathbb R^n)}\le
C_{p,q,n}\|f\|_{L^p(\mathbb R^n)}
$$
holds for all $q>2$ and  $(3+\alpha)/(1+\alpha)<p<(3+\alpha)/2$. In particular, the above estimate holds for  $\Omega\in\bigcap_{\alpha>1}\mathcal{G}_\alpha(\mathbf S^{n-1})$ and all $1<p<\infty$.
\end{corollary}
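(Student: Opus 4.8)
The plan is to obtain this corollary as an immediate consequence of Theorem \ref{thm:Omega alpha} combined with Lemma \ref{lemma1}; no new analytic input is needed beyond a short bookkeeping of exponents. First I would fix $\alpha>1$, put $\rho=2$, $p_0=(3+\alpha)/(1+\alpha)$, $p_1=(3+\alpha)/2$, and check the admissibility condition $p_0<\rho<p_1$ required by Lemma \ref{lemma1}. This is elementary: the inequality $(3+\alpha)/(1+\alpha)<2$ is equivalent to $3+\alpha<2(1+\alpha)$, i.e. to $\alpha>1$, while $2<(3+\alpha)/2$ is equivalent to $4<3+\alpha$, again i.e. to $\alpha>1$. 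Thus the hypothesis $\alpha>1$ is precisely what places the exponent $\rho=2$ inside the open interval $(p_0,p_1)$.

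Next, since $\Omega$ satisfies the cancellation condition \eqref{can of O} and $\Omega\in\mathcal{G}_\alpha(\mathbf S^{n-1})$ with $\alpha>1$, Theorem \ref{thm:Omega alpha} supplies, for every $p\in(p_0,p_1)$, the $\lambda$-jump bound
$$\sup_{\lambda>0}\big\|\lambda\sqrt{N_\lambda(\mathcal Tf)}\big\|_{L^p(\mathbb{R}^n)}\le C_{p,n}\|f\|_{L^p(\mathbb{R}^n)}.$$
Writing $\sqrt{N_\lambda(\mathcal Tf)}=\big(N_\lambda(\mathcal Tf)\big)^{1/\rho}$ with $\rho=2$, this is exactly the hypothesis of Lemma \ref{lemma1} for the family $\mathcal A=\mathcal T$ and this value of $\rho$. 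I would then invoke Lemma \ref{lemma1} to conclude that, for all $q>\rho=2$ and all $p\in\big((3+\alpha)/(1+\alpha),(3+\alpha)/2\big)$,
$$\|V_q(\mathcal Tf)\|_{L^p(\mathbb{R}^n)}\le C_{p,q,n}\|f\|_{L^p(\mathbb{R}^n)},$$
the dependence of the constant on $n$ being inherited from that in Theorem \ref{thm:Omega alpha}; the measurability of $V_q(\mathcal Tf)$ that is needed to make sense of the left-hand side is standard, as recalled in Section \ref{sect1}.

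Finally, for the ``in particular'' assertion I would assume $\Omega\in\bigcap_{\alpha>1}\mathcal{G}_\alpha(\mathbf S^{n-1})$ and fix an arbitrary $p\in(1,\infty)$. Since $(3+\alpha)/(1+\alpha)\to1$ and $(3+\alpha)/2\to\infty$ as $\alpha\to\infty$, one can choose $\alpha>1$ so large that $p\in\big((3+\alpha)/(1+\alpha),(3+\alpha)/2\big)$; as $\Omega\in\mathcal{G}_\alpha(\mathbf S^{n-1})$ for that $\alpha$, the previous step then gives the claimed estimate at $p$, and letting $p$ vary over $(1,\infty)$ finishes the argument. I do not expect any genuine obstacle here: all the analytic work is contained in Theorem \ref{thm:Omega alpha} (and in Lemma \ref{lemma1}), and the only points deserving a line of care are the equivalence that $p_0<2<p_1$ holds if and only if $\alpha>1$ (which is exactly why the stronger restriction $\alpha>1$, rather than $\alpha>0$, is imposed) together with the observation that the intervals $\big((3+\alpha)/(1+\alpha),(3+\alpha)/2\big)$ exhaust $(1,\infty)$ as $\alpha\to\infty$.
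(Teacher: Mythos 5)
Your proposal is correct and follows exactly the paper's route: the paper obtains this corollary as an immediate consequence of Theorem \ref{thm:Omega alpha} combined with Lemma \ref{lemma1}, and your write-up merely makes explicit the bookkeeping (that $\rho=2$ lies in the interval $\big((3+\alpha)/(1+\alpha),(3+\alpha)/2\big)$ precisely when $\alpha>1$, and that these intervals exhaust $(1,\infty)$ as $\alpha\to\infty$), which the paper leaves implicit. No gaps.
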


\begin{remark}
{\rm In \cite{FGP}, the author showed the associated maximal singular integral operator, that is the strong $\infty$-variation operator,  is $L^p$-bounded for $(1+2\alpha)/(2\alpha)<p<1+2\alpha$ and $\alpha>1/2$. So
we think the scope of $p$ in Corollary \ref{thm:Omega alpha vq} is not optimal, and it is very interesting to enlarge the scope of $p$ depending on $q$.}
\end{remark}

The main sketch of proving Theorem \ref{thm:LlogL} and Theorem \ref{thm:Omega alpha} are taken from  \cite{CJRW2002} and \cite{JSW08}. That is, we first reduce the $\lambda$-jump estimate to short $2$-variation estimate and dyadic $\lambda$-jump estimate (see the beginning of the next section for related definitions and statement); then use the rotation method or the vector-valued singular integral operator theory to deal with short $2$-variation operators; and use Fourier transform and square function estimates to obtain dyadic $\lambda$-jump estimate.

 However, the underlying details are substantially different due to the kernels being quite rough. For instance, firstly when $\Omega\in L\log^+L(\mathbf{S}^{n-1})$, it follows from \cite{DR86} that unlike the case $\Omega\in L^r(\mathbf{S}^{n-1})$ ($r>1$) the decay of Fourier transform of associated kernel is not available any more, and we have to decompose $\Omega$ into pieces having polynomial decay as done in \cite{AP02}.  In order to finally sum all the pieces to conclude the desired result, we exploit some subtle calculations to get sharp bounds for each piece in obtaining dyadic $\lambda$-jump estimate.

Secondly when $\Omega\in \mathcal{G}_\alpha(\mathbf{S}^{n-1})$, to obtain the short $2$-variation estimate, first of all, the rotation method seems not to work here. Instead, we appeal to the vector-valued singular integral operator theory, which has been made use of by Jones {\it et al} \cite{JSW08} for averaging over spheres and curves. But in our case, the kernel being rough brings us a lot of difficulties in both obtaining the $L^2$ estimate and verifying the H\"ormander condition.

5. \emph{Family $\mathcal M$ of the averaging operators with rough kernel}

The third aim of the present paper is to establish some jump and variational inequalities for the family
$\mathcal M=\{M_t\}_{t>0}$. Here $M_t$ denotes the averaging operator with rough kernels defined by
\begin{equation}\label{averaging operator}
M_t f(x)=\frac1{t^n}\int_{|y|<t}\Omega(y')f(x-y)dy,
\end{equation}
where $\Omega\in L^1({\mathbf S}^{n-1})$.

The motivation of considering the family $\mathcal M=\{M_t\}_{t>0}$ is two-fold. Firstly, in the case $\Omega\equiv1$,
the jump and variational inequalities for the family $\mathcal M=\{M_t\}_{t>0}$ have been well studied in \cite{CJRW2000} and \cite{JRW00}, which were based on the 1-dimensional results \cite{Bou89} and \cite{JKRW98}. Secondly,
the maximal operator associated with  $\mathcal M=\{M_t\}_{t>0}$
\begin{equation}\nonumber
M^*(f)(x)=\sup_{t>0}\frac1{t^n}\int_{|y|<t}|\Omega(y')f(x-y)|dy,
\end{equation}
plays a very important role in studying rough singular integral operators (see
\cite{ST}, \cite{DR86}, \cite{CR}, \cite{D93} or \cite{LDY} for more details).

\begin{theorem}\label{thm:maximal}
Suppose the family $\mathcal M=\{M_t\}_{t>0}$ is defined in \eqref{averaging operator}.

{\rm (i)}  If $\Omega\in H^1(\mathbf S^{n-1})$ or $L(\log^+\!\!L)^{1/2}(\mathbf S^{n-1})$ and
$1<p<\infty$, then the $\lambda$-jump inequality for the family $\mathcal M$ holds. That is, there
exists $C_{p,n}>0$ so that
\begin{equation}\label{N12m}
\sup_{\lambda>0}\|\lambda \sqrt{N_\lambda(\mathcal M f)}\|_{L^p(\mathbb R^n)}\le
C_{p,n}\|f\|_{L^p(\mathbb R^n)}.
\end{equation}

{\rm(ii)} If $\Omega\in L^1(\mathbf S^{n-1})$, then for $q>2$ and $1<p<\infty$,  the strong $q$-variation  inequality for the family $\mathcal M$ holds. That is, there
exists $C_{p,q,n}>0$ so that
\begin{equation}\label{Vqrm}
 \|V_q(\mathcal Mf)\|_{L^p(\mathbb R^n)}\le
C_{p,q,n}\|f\|_{L^p(\mathbb R^n)}.
\end{equation}
\end{theorem}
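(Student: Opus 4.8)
The plan is to follow the reduction scheme advertised in the introduction: split the $\lambda$-jump at dyadic scales, handle the short variations by the method of rotations, and handle the dyadic part by Fourier analysis and square functions. Concretely, writing $y=r\theta$ in polar coordinates gives $M_tf(x)=\int_{\mathbf S^{n-1}}\Omega(\theta)A_t^\theta f(x)\,d\theta$ with $A_t^\theta f(x)=\int_0^1 s^{n-1}f(x-ts\theta)\,ds$. For part (ii), since $\|\cdot\|_{V_q}$ is subadditive for $q\ge1$, Minkowski's integral inequality and the rotation invariance of Lebesgue measure give $\|V_q(\mathcal Mf)\|_{L^p(\mathbb R^n)}\le\|\Omega\|_{L^1(\mathbf S^{n-1})}\|V_q(\{A_t^{e_1}f\}_{t>0})\|_{L^p(\mathbb R^n)}$, and by Fubini the right-hand side is governed by the one-dimensional estimate for the family $\{\int_0^1 s^{n-1}h(\cdot-ts)\,ds\}_{t>0}$; since $n s^{n-1}\mathbf 1_{(0,1)}(s)$ is an absolutely convergent superposition of reparametrised standard one-sided averages, this follows for $q>2$, $1<p<\infty$ from the classical variational inequalities of \cite{Bou89} and \cite{JKRW98}, giving \eqref{Vqrm} already for $\Omega\in L^1$. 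For part (i), I would use the standard splitting (as in \cite{JSW08}) of $\lambda\sqrt{N_\lambda(\mathcal Mf)}$ into the short $2$-variation square function $S_2f=\big(\sum_{j\in\mathbb Z}V_2(\{M_tf\}_{2^j\le t<2^{j+1}})^2\big)^{1/2}$ and the dyadic $\lambda$-jump term $\lambda\sqrt{N_\lambda(\{M_{2^j}f\}_{j\in\mathbb Z})}$. By exactly the same Minkowski-plus-rotations argument, $\|S_2f\|_{L^p(\mathbb R^n)}$ is bounded by $\|\Omega\|_{L^1(\mathbf S^{n-1})}$ times the one-dimensional short-variation square function of the standard averages, which is bounded on $L^p(\mathbb R)$ for all $1<p<\infty$ (cf. \cite{JKRW98}, \cite{CJRW2000}). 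So everything reduces to the dyadic $\lambda$-jump, and this is the only place where $\Omega\in H^1$ or $\Omega\in L(\log^+\!\!L)^{1/2}$ is used.

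For the dyadic $\lambda$-jump the method of rotations fails ($N_\lambda$ is not subadditive), so I would argue on the Fourier side à la \cite{JSW08}: with $K_1(y)=\Omega(y/|y|)\mathbf 1_{\{|y|<1\}}(y)$, so that $M_{2^j}f=f*2^{-jn}K_1(\cdot/2^j)$, a Littlewood--Paley decomposition and Plancherel reduce the $L^2$ estimate essentially to the uniform bound $\sup_{\xi\in\mathbb R^n}\|\{\widehat{K_1}(2^j\xi)\}_{j\in\mathbb Z}\|_{V_2}<\infty$. For $\Omega\in L^r$ ($r>1$) this holds because $\widehat{K_1}(\xi)=\tfrac1n\int_{\mathbf S^{n-1}}\Omega\,d\sigma+O(|\xi|)$ near the origin and $|\widehat{K_1}(\xi)|\lesssim|\xi|^{-\delta(r)}$ at infinity (van der Corput / \cite{DR86}); for rough $\Omega$ there is no such decay, so I would decompose $\Omega$. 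For $\Omega\in L(\log^+\!\!L)^{1/2}$ write $\Omega=\sum_{m\ge0}\Omega_m$ with $\Omega_m=\Omega\,\mathbf 1_{\{2^m\le|\Omega|<2^{m+1}\}}$ (and $\Omega_0=\Omega\,\mathbf 1_{\{|\Omega|<2\}}$), so that $\|\Omega_m\|_\infty\lesssim2^m$ and $\sum_m(1+m)^{1/2}\|\Omega_m\|_{L^1(\mathbf S^{n-1})}\approx\|\Omega\|_{L(\log^+\!\!L)^{1/2}(\mathbf S^{n-1})}$; for $\Omega\in H^1$ use an atomic decomposition $\Omega=\sum_k\lambda_k a_k$ with $\sum_k|\lambda_k|\approx\|\Omega\|_{H^1(\mathbf S^{n-1})}$. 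For each piece I would prove, by the appropriate van der Corput estimate — for $\Omega_m$ exploiting $\|\Omega_m\|_\infty\lesssim2^m$ and the polynomial-decay splitting of \cite{AP02}, for an atom exploiting its size, support and cancellation (the mean-zero property giving $\widehat{(K_1)_{a_k}}(0)=0$) — a sharp per-piece bound $\sup_\xi\|\{\widehat{(K_1)_m}(2^j\xi)\}_j\|_{V_2}\lesssim(1+m)^{1/2}\|\Omega_m\|_{L^1}$ (resp. $\lesssim|\lambda_k|$ with an absolute constant), and then sum via the triangle inequality for $\|\cdot\|_{V_2}$:
$$\sup_\xi\big\|\{\widehat{K_1}(2^j\xi)\}_j\big\|_{V_2}\le\sum_m\sup_\xi\big\|\{\widehat{(K_1)_m}(2^j\xi)\}_j\big\|_{V_2}\lesssim\|\Omega\|_{L(\log^+\!\!L)^{1/2}(\mathbf S^{n-1})}\quad(\text{resp.}\ \lesssim\|\Omega\|_{H^1(\mathbf S^{n-1})}).$$
This yields the $L^2$ dyadic $\lambda$-jump inequality; for $p\ne2$ I would either run the same Littlewood--Paley/square-function arguments on each bounded piece $\Omega_m$ (which lies in every $L^r$) to get the $L^p$ bound with the same per-piece constant and sum, or interpolate the $L^2$ bound against a crude $L^{p_0}$ bound using the quasi-subadditivity of $f\mapsto\lambda\sqrt{N_\lambda(\mathcal Mf)}$. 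Combined with the bound for $S_2f$, this gives \eqref{N12m}.

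The hard part will be the dyadic $\lambda$-jump estimate for rough $\Omega$: establishing, for each piece of the decomposition, a van der Corput / Fourier-decay estimate with exactly the right growth — order $(1+m)^{1/2}$ for the exponential level sets, an absolute constant for $H^1$-atoms — so that the summation is controlled precisely by $\|\Omega\|_{L(\log^+\!\!L)^{1/2}}$ (resp. $\|\Omega\|_{H^1}$). It is this balance that fixes the exponent $1/2$ in the hypothesis, and carrying it out requires the polynomial-decay decomposition of \cite{AP02} together with careful bookkeeping across the scales $2^j$ (both for the symbol/$L^2$ estimate and, if one goes that route, for the $L^p$ square functions). A secondary technical point is that the Fourier/square-function machinery is intrinsically $L^2$-based, so obtaining the $L^p$ ($p\ne2$) jump — as opposed to norm — inequality requires either pushing the Littlewood--Paley estimates through on all $L^p$ uniformly over the pieces or a Marcinkiewicz-type interpolation built on the quasi-subadditivity of $\lambda\sqrt{N_\lambda}$.
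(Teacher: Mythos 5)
Your part (ii) is fine and is essentially the paper's argument: reduce by the rotation method (Minkowski plus rotation invariance) to a one-dimensional variational inequality for the family $\frac1{t^n}\int_0^t f(x-s)s^{n-1}ds$; the paper obtains this 1-d input from the Fourier-decay machinery of \cite{JSW08}, while your superposition of the weight $s^{n-1}\chi_{(0,1)}$ over standard one-sided averages plus \cite{Bou89}, \cite{JKRW98} is an acceptable alternative. Likewise your reduction of (i) to short $2$-variation plus dyadic jump, and the rotation treatment of $S_2$, match the paper.

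The genuine gap is in your dyadic $\lambda$-jump estimate for (i). First, the claimed reduction ``Littlewood--Paley and Plancherel reduce the $L^2$ estimate essentially to $\sup_\xi\|\{\widehat{K_1}(2^j\xi)\}_j\|_{V_2}<\infty$'' is not a valid step: variation norms do not transfer through Plancherel, because the partition in $V_2(\{M_{2^j}f(x)\})$ depends on $x$; the standard $L^2$ arguments (e.g.\ Theorem~1.1 of \cite{JSW08}) instead compare with a martingale/smooth family and need an $\ell^2$-type bound on symbol differences, which is not implied by a uniform $V_2$ bound. Second, even after passing to a subadditive dominating quantity so that the pieces $\Omega_m$ can be summed (you cannot simply add jump inequalities over infinitely many pieces, since $\lambda\sqrt{N_\lambda}$ is only quasi-subadditive), the crux is the per-piece $L^p$ bound with growth $(1+m)^{1/2}$ for all $1<p<\infty$; you assert it ``by van der Corput'' but give no argument, and the naive route (per-piece Fourier decay of exponent $\sim 1/m$ as in \cite{AP02}, then summing over scales and interpolating) produces growth of order $m$, which only yields the hypothesis $L\log^+\!L$, strictly stronger than the claimed $L(\log^+\!L)^{1/2}$; the $H^1$ case via atoms is likewise only promised. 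The paper avoids all of this with a much simpler observation: after splitting off the mean of $\Omega$ (the constant part being handled by \cite{JKRW98}), one has the pointwise domination \eqref{Pd}, i.e.\ $\lambda^2N^d_\lambda(\mathcal Mf)\le C\sum_k|f*\sigma_{2^k}|^2$, so the dyadic jump is controlled by a discrete Marcinkiewicz-type square function, whose $L^p$ bounds for $\Omega\in H^1$ or $L(\log^+\!L)^{1/2}$ are exactly the known results of \cite{DFP} and \cite{AACP}; proving those bounds is precisely the hard content your sketch leaves unproved, and it is why the exponent $1/2$ (and not your ability to sum) is the true threshold, cf.\ Walsh's counterexample cited in the paper's remark.
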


The idea of the proof of Theorem \ref{thm:maximal} is similar as that for Theorem \ref{thm:LlogL}. That is, we use the rotation method to show (\ref{Vqrm}) and discrete Marcinkiewicz integrals to deal with associated dyadic $\lambda$-jump estimate.

\begin{remark}
\emph{Note that both $H^1(\mathbf S^{n-1})$ and $L(\log^+\!\!L)^{1/2}(\mathbf S^{n-1})$ contain $L\log^+\!\!L(\mathbf S^{n-1})$, but they do not contain each other (see e.g. \cite{AACP}). It seems difficult to get \eqref{N12m} using only the method in the present paper for $\Omega\in L^1(\mathbf S^{n-1})$, since which is not sufficient for boundedness of Marcinkiewicz integrals.}
\end{remark}

The proofs of Theorem \ref{thm:LlogL}, Theorem \ref{thm:Omega alpha} and Theorem \ref{thm:maximal} will be given in Sections 2, 3 and 4, respectively.

\section{Proof of Theorem \ref{thm:LlogL}}
We shall mainly show Theorem \ref{thm:LlogL}(ii), the $\lambda$-jump estimate \eqref{N12}, while Theorem \ref{thm:LlogL}(i) will be obtained in the course of the proof.
Let us start with recalling the strategy taken by Jones {\it et al} \cite{JSW08}.
Let $\mathcal F=\{F_t\}_{t\in\mathbb R^+}$ be a family of functions. For $j\in\mathbb Z$ and $x\in\mathbb R^n$, define
 $$
V_{2,j}(\mathcal F)(x)=\bigg(\sup_{\substack
{t_1<\cdots<t_N\\
[t_l,t_{l+1}]\subset[2^j,2^{j+1}]}}\sum_{l=1}^{N-1}|F_{t_{l+1}}(x)-F_{t_l}(x)|^2\bigg)^{1/2}
 $$
 and the \emph{short $2$-variation operator}
 $$
S_2(\mathcal F)(x)=\bigg(\sum_{j\in\mathbb Z}[V_{2,j}(\mathcal
F)(x)]^2\bigg)^{1/2}.
 $$
We also define \emph{dyadic
$\lambda$-jump function} as follows:
$$N_{\lambda}^{d}(\mathcal F)(x)=\sup_{N\in\Bbb N}\big\{\exists\ j_1<k_1\le j_2<k_2\le \cdots\le j_N<k_N, \text{s.t.}\ |F_{2^{k_l}}(x)-F_{2^{j_l}}(x)|>\lambda\big\}.$$

Then the following pointwise comparison holds.
 \begin{lemma}\ {\rm (see \cite[Lemma 1.3]{JSW08})}\label{lem:convert lemma}
$$\lambda\sqrt{N_\lambda(\mathcal F)(x)}\le
C\bigg(S_2(\mathcal F)(x)+\lambda\sqrt{N_{\lambda/3}^{d}(\mathcal
F)(x)}\bigg)$$
uniformly in $\lambda>0$.
 \end{lemma}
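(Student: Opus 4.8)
The plan is to prove the inequality \emph{pointwise}: fix $x\in\mathbb R^n$ and suppress it from the notation. Write $N=N_\lambda(\mathcal F)(x)$ and pick a witnessing sequence $s_1<t_1\le s_2<t_2\le\cdots\le s_N<t_N$ with $|F_{t_k}-F_{s_k}|>\lambda$ for every $k$. I would classify each index $k$ according to the position of $[s_k,t_k]$ relative to the dyadic grid $\{2^j\}_{j\in\mathbb Z}$: call $k$ a \emph{short} jump if $[s_k,t_k]\subset[2^j,2^{j+1}]$ for some $j$, and a \emph{long} jump otherwise. Since $s_k<t_k$, a short jump lies in exactly one dyadic block. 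Then, by subadditivity of the square root, $\lambda\sqrt N\le\lambda\sqrt{\#\{\text{short jumps}\}}+\lambda\sqrt{\#\{\text{long jumps}\}}$, and it remains to estimate the two counts.

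For the short jumps I would argue block by block: if the short jumps contained in a fixed $[2^j,2^{j+1}]$ are $k_1<\cdots<k_m$, then $s_{k_1},t_{k_1},s_{k_2},\dots,t_{k_m}$ is an admissible increasing sequence inside $[2^j,2^{j+1}]$, so the definition of $V_{2,j}$ gives $V_{2,j}(\mathcal F)(x)^2\ge\sum_{i=1}^m|F_{t_{k_i}}-F_{s_{k_i}}|^2>m\lambda^2$. Summing over $j$ yields $\#\{\text{short jumps}\}<\lambda^{-2}\sum_jV_{2,j}(\mathcal F)(x)^2=\lambda^{-2}S_2(\mathcal F)(x)^2$, i.e. $\lambda\sqrt{\#\{\text{short jumps}\}}\le S_2(\mathcal F)(x)$.

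For a long jump $k$ let $2^{a_k}$ be the smallest power of $2$ with $2^{a_k}\ge s_k$ and $2^{b_k}$ the largest with $2^{b_k}\le t_k$; being long forces $a_k\le b_k$, while $s_k\in(2^{a_k-1},2^{a_k}]$ and $t_k\in[2^{b_k},2^{b_k+1})$. I would then use the decomposition
$$F_{t_k}-F_{s_k}=(F_{t_k}-F_{2^{b_k}})+(F_{2^{b_k}}-F_{2^{a_k}})+(F_{2^{a_k}}-F_{s_k}),$$
so at least one of the three terms exceeds $\lambda/3$ in modulus, which splits the long jumps into three collections $A,B,C$. For $k\in A$ one has $a_k<b_k$, and the ordering of the witnessing sequence gives $b_k\le a_{k+1}$, so the dyadic pairs $(2^{a_k},2^{b_k})$, $k\in A$, interlace into an admissible configuration for $N^d_{\lambda/3}$; hence $\#A\le N^d_{\lambda/3}(\mathcal F)(x)$. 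For $k\in B$ the term $F_{t_k}-F_{2^{b_k}}$ is dominated by $V_{2,b_k}(\mathcal F)(x)$, and a short check shows the indices $b_k$ ($k\in B$) are pairwise distinct (equality would force $t_k=2^{b_k}$, killing that term), so $\#B\,(\lambda/3)^2<\sum_{k\in B}V_{2,b_k}(\mathcal F)(x)^2\le S_2(\mathcal F)(x)^2$, i.e. $\lambda\sqrt{\#B}\le3S_2(\mathcal F)(x)$; the collection $C$ is treated symmetrically with $2^{a_k-1}$ replacing $2^{b_k}$. Combining the four estimates and again using subadditivity of the square root gives $\lambda\sqrt N\lesssim S_2(\mathcal F)(x)+\lambda\sqrt{N^d_{\lambda/3}(\mathcal F)(x)}$, as claimed.

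The only genuine obstacle I anticipate is the combinatorial bookkeeping in the long‑jump case: checking that the dyadic pairs attached to the collection $A$ really interlace so as to count against $N^d_{\lambda/3}$, and that the dyadic indices attached to $B$ and $C$ do not repeat. These are elementary consequences of the ordering $s_1<t_1\le s_2<t_2\le\cdots$ but must be verified with a little care; everything else (the Chebyshev‑type counting for short jumps, the subadditivity of $\sqrt{\cdot}\,$, and the defining properties of $V_{2,j}$, $S_2$ and $N^d_\lambda$) is routine. No measurability concerns arise, since the entire argument is pointwise in $x$.
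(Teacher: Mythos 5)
Your argument is correct and complete: the split into short jumps (counted block by block against $V_{2,j}$, hence $S_2$), long jumps split three ways by the triangle inequality, the interlacing check $b_k\le a_{k'}$ for the collection $A$, and the distinctness of the dyadic indices attached to $B$ and $C$ all go through exactly as you outline (with, say, $C=7$). The paper itself offers no proof of this lemma, simply citing Lemma 1.3 of Jones--Seeger--Wright, and your argument is essentially the one given there, so there is nothing to correct beyond the minor remark that coincident points $t_{k_i}=s_{k_{i+1}}$ should be merged to get a strictly increasing sequence in the definition of $V_{2,j}$.
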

Lemma \ref{lem:convert lemma} reduces the desired estimate
\begin{align*}
\|\lambda \sqrt{N_\lambda(\mathcal Af)}\|_{L^p(\mathbb R^n)}\le
C_{p}\|f\|_{L^p(\mathbb R^n)}
\end{align*}
 for any fixed $1<p<\infty$
to
\begin{align}\label{dyadic jump singular}
\|\lambda \sqrt{N^d_\lambda(\mathcal Af)}\|_{L^p(\mathbb R^n)}\le
C_{p}\|f\|_{L^p(\mathbb R^n)}
\end{align}
and
\begin{align}\label{short variation singular}
\|S_2(\mathcal Af)\|_{L^p(\mathbb R^n)}\le
C_{p}\|f\|_{L^p(\mathbb R^n)}
\end{align}
for any family of linear operators $\mathcal{A}$.

Let $\mathcal{T}$ be a family of truncated singular integrals of homogeneous type. In \cite{CJRW2002}, the authors established estimate (\ref{short variation singular}) for $\Omega\in L\log^+L(\mathbf{S}^{n-1})$ using the rotation method. In the following subsection, we observe that the rotation method works also in the case $\Omega\in H^1(\mathbf{S}^{n-1})$. Whence we obtain the strong $q$-strong estimate (\ref{Vqr}), which is Theorem \ref{thm:LlogL}(i).

While in \cite{JSW08}, the authors showed estimate (\ref{dyadic jump singular}) for $\Omega\in L^r(\mathbf{S}^{n-1})$ making use of the polynomial decay of Fourier transform of the measure $\nu$ defined by
$$\langle \nu, f\rangle=\int_{1\leq|x|\leq2}\frac{\Omega(x/|x|)}{|x|^n}f(x)dx.$$ In the second subsection, we show that actually estimate (\ref{dyadic jump singular}) is still true for $\Omega\in L\log^+L(\mathbf{S}^{n-1})$ even though  the polynomial decay of Fourier transform of the measure $\nu$ is not available as explained in the Introduction. Thus we obtain \eqref{N12}, which is Theorem \ref{thm:LlogL}(ii).

\subsection{Proof of Theorem \ref{thm:LlogL}(i)}
As observed in \cite{CJRW2002}, the rotation method allows us to obtain simultaneously short $2$-variation estimate and strong $q$-variation estimate since both estimates for the family of truncated Hilbert transforms have been established in \cite{CJRW2000}. In \cite{CJRW2002}, they provided the proof of strong $q$-variation estimate. In the present paper, we give a sketch of the proof of short $2$-variation.

\begin{proposition}\label{pro:short H1}
Let $\mathcal T$ be given as
in \eqref{tr of S}. $\Omega$ satisfies \eqref{can of O} and $\Omega\in H^1(\mathbf S^{n-1})$. Then
$$
\|S_2(\mathcal T f)\|_{L^p(\mathbb R^n)}\le C_p\|f\|_{L^p(\mathbb
R^n)}\quad\text{for}\ \ 1<p<\infty.
$$
The same inequality holds for the strong $q$-variation operator $V_q(\mathcal Tf)$ with $q>2$.
\end{proposition}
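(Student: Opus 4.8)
The plan is to run the Calderón--Zygmund rotation method, reducing everything to the one-dimensional results for the family of truncated Hilbert (really: truncated directional) transforms that were established in \cite{CJRW2000}. The starting point is the atomic decomposition of $H^1(\mathbf S^{n-1})$: we write $\Omega=\sum_{k}\lambda_k b_k$, where each $b_k$ is a (regular) $H^1$-atom on the sphere supported in a cap, satisfying the cancellation $\int_{\mathbf S^{n-1}}b_k\,d\sigma=0$ and the size bound dictated by the cap, and $\sum_k|\lambda_k|\lesssim\|\Omega\|_{H^1(\mathbf S^{n-1})}$. Because of \eqref{can of O} we may arrange that each atom already has mean zero on the sphere, so each truncated operator $T^{b_k}_\varepsilon$ associated to $b_k$ makes sense as a family. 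Then $\mathcal T=\sum_k\lambda_k\mathcal T^{b_k}$ in the obvious sense, and by the triangle inequality for the short $2$-variation seminorm (and likewise for the strong $q$-variation seminorm, $q>2$) we have the pointwise bound
\begin{equation*}
S_2(\mathcal Tf)(x)\le\sum_k|\lambda_k|\,S_2(\mathcal T^{b_k}f)(x).
\end{equation*}
So it suffices to prove a bound $\|S_2(\mathcal T^{b}f)\|_{L^p}\le C_p\|f\|_{L^p}$ for a single atom $b$, \emph{with the constant $C_p$ independent of the atom}, and then sum against $\sum_k|\lambda_k|$.

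For a single atom $b$ supported in a cap of radius $\delta$ around some $\theta_0\in\mathbf S^{n-1}$, I would use the rotation identity: writing $y=r\theta$,
\begin{equation*}
T^{b}_\varepsilon f(x)=\int_{\mathbf S^{n-1}}b(\theta)\Big(\int_{r>\varepsilon}f(x-r\theta)\frac{dr}{r}\Big)d\sigma(\theta),
\end{equation*}
which exhibits $T^b_\varepsilon f$ as an average over $\theta$ of the one-dimensional truncated Hilbert transform of $f$ taken along the line through $x$ in direction $\theta$ (here the cancellation of $b$, or of $\Omega$, is what lets us replace the half-line integral by the full truncated Hilbert transform up to a harmless term). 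Minkowski's integral inequality for the $V_2$- and $V_q$-seminorms pushes the $\theta$-average outside, giving
\begin{equation*}
S_2(\mathcal T^b f)(x)\le\int_{\mathbf S^{n-1}}|b(\theta)|\,S_2(\mathcal H^\theta f)(x)\,d\sigma(\theta),
\end{equation*}
where $\mathcal H^\theta=\{H^\theta_\varepsilon\}_{\varepsilon>0}$ is the family of truncated Hilbert transforms along direction $\theta$. Taking the $L^p(\mathbb R^n)$ norm, using Minkowski again and the rotation-invariance of Lebesgue measure, each $\|S_2(\mathcal H^\theta f)\|_{L^p(\mathbb R^n)}$ is bounded by the one-dimensional short $2$-variation bound from \cite{CJRW2000} (Fubini in the $n-1$ transverse variables reduces it to the line), with a constant depending only on $p$ and not on $\theta$. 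Hence $\|S_2(\mathcal T^b f)\|_{L^p}\le C_p\,\|b\|_{L^1(\mathbf S^{n-1})}\,\|f\|_{L^p}\le C_p\|f\|_{L^p}$, since an $H^1$-atom has $L^1(\mathbf S^{n-1})$ norm at most one. The same computation with $V_q$ in place of $S_2$, invoking the $L^p(\mathbb R)$ strong $q$-variation bound for the truncated Hilbert transform from \cite{CJRW2000} (valid for $q>2$), yields the stated inequality for $V_q(\mathcal Tf)$.

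The main obstacle I anticipate is not the rotation step itself but justifying that the constant is genuinely atom-independent and that the sum over atoms converges in the relevant sense — i.e.\ controlling the passage from a bound on each $\mathcal T^{b_k}$ to a bound on $\mathcal T$. Two points need care: first, one should check that $\sum_k\lambda_k T^{b_k}_\varepsilon f$ really converges (in $L^p$, or a.e.) to $T_\varepsilon f$ \emph{uniformly enough in $\varepsilon$} that the $S_2$/$V_q$ seminorm of the sum is controlled by the sum of the seminorms; this is where the triangle inequality for the seminorm together with monotone/dominated convergence, and the a priori finiteness coming from $\Omega\in L^1\subset$ the \cite{CJRW2002} setting, do the work. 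Second, one must make sure the reduction to the full truncated Hilbert transform (as opposed to a one-sided truncation) is legitimate: the difference between $\int_{r>\varepsilon}f(x-r\theta)\frac{dr}{r}$ over a half-line and the truncated Hilbert transform is, after integrating against $b(\theta)$ and using $\int b=0$, an operator whose short $2$-variation is dominated by a rough maximal operator bounded on $L^p$ — this is routine but should be spelled out. Modulo these bookkeeping issues, the proof is a direct transcription of the $L\log^+\!L$ argument of \cite{CJRW2002} with the $L\log^+\!L$ decomposition replaced by the atomic decomposition of $H^1(\mathbf S^{n-1})$.
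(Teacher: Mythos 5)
Your reduction works only for the odd part of the kernel, and the step where you replace the half-line integral $\int_{r>\varepsilon}f(x-r\theta)\,dr/r$ by the full truncated Hilbert transform ``up to a harmless term'' is exactly where the argument breaks. Split $b=b_o+b_e$ into odd and even parts. For $b_o$ the contributions of $\theta$ and $-\theta$ recombine into the two-sided truncated Hilbert transform along $\theta$, and Minkowski plus the one-dimensional bounds of \cite{CJRW2000} give the estimate with constant $C_p\|b_o\|_{L^1}$; this is Case 1 of the paper's proof and needs nothing beyond $\Omega\in L^1$. But for $b_e$ the recombination produces $\tfrac12\int_{\mathbf S^{n-1}}b_e(\theta)\int_{|r|>\varepsilon}|r|^{-1}f(x-r\theta)\,dr\,d\sigma(\theta)$, and the inner operator (convolution along a line with $|r|^{-1}\chi_{\{|r|>\varepsilon\}}$, a kernel with no cancellation and a non-integrable tail) is not $L^p$-bounded, uniformly in $\varepsilon$ or otherwise. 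The mean-zero property of $b$ makes the $\theta$-integral meaningful, but the moment you apply Minkowski's inequality and pass to $|b(\theta)|$ you destroy precisely that cancellation; so the claimed pointwise bound $S_2(\mathcal T^{b}f)\le\int_{\mathbf S^{n-1}}|b(\theta)|S_2(\mathcal H^{\theta}f)\,d\sigma(\theta)$ is false for even kernels, and the ``difference term'' is not dominated by a rough maximal operator. A sanity check confirms this: your final estimate uses only $\|b\|_{L^1}\le1$ and $\int b\,d\sigma=0$, so if correct it would yield the proposition for every mean-zero $\Omega\in L^1(\mathbf S^{n-1})$ (making the atomic decomposition irrelevant). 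That is known to fail: since $\sup_{\varepsilon}|T_\varepsilon f|$ is pointwise controlled by $V_q(\mathcal Tf)$ (up to the limit at $\varepsilon\to\infty$), your bound would force $L^p$-boundedness of the maximal truncated singular integral for all mean-zero $L^1$ kernels, which is not true and is the very reason the hypothesis $\Omega\in H^1(\mathbf S^{n-1})$ appears.

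The paper treats the even part not by atoms but by the classical Calder\'on--Zygmund device: write $f=-\sum_i R_i^2f$ and compose the even-kernel operator with the Riesz transforms. The composed kernel $N$ in \eqref{comp of R and T} is odd and homogeneous of degree $-n$, and---this is where $H^1(\mathbf S^{n-1})$ is genuinely used, via Grafakos--Stefanov \cite{GS99}, Lemma \ref{prop of N}---its restriction to the sphere is integrable with $\int_{\mathbf S^{n-1}}|N|\,d\sigma\lesssim\|\Omega\|_{H^1(\mathbf S^{n-1})}$. The main term $N_\varepsilon\ast g_i$ then falls under the odd case, while the error terms $N^{\Phi}_\varepsilon$, $\Delta_\varepsilon$, $D_\varepsilon$, $T^0_\varepsilon$ are handled by Lemma \ref{S2varphi} together with the one-dimensional estimates of \cite{CJRW2000}. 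If you wish to salvage an atomic-decomposition route, you would have to exploit the support and size properties of each atom (not merely $\|b_k\|_{L^1}\le1$ and cancellation) to overcome the even-part obstruction, which is a substantially different and harder argument than the one you sketched.
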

 To prove Proposition \ref{pro:short H1} by the Calder\'on-Zygmund rotation method, we need the following known result in one dimension case.

\begin{lemma}\label{S21}{\rm (\cite[Theorem 1.4]{CJRW2000})}
Let $\mathcal H=\{H_\varepsilon\}_{\varepsilon>0}$ be the family of truncated Hilbert transforms. Then
$$\|S_2(\mathcal H
f)\|_{L^p(\mathbb R)}\le C_p\|f\|_{L^p(\mathbb R)}\quad\text{for}\ \ 1<p<\infty.$$
Similar statement holds true for $V_q(\mathcal Hf)$ with $q>2$.
\end{lemma}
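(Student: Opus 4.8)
The plan is to prove the two assertions in the order the general machinery dictates: the short $2$-variation bound is the core, and the strong $q$-variation bound for $q>2$ will follow from it together with a dyadic long-jump estimate, Lemma \ref{lem:convert lemma} and Lemma \ref{lemma1}. So the goal is first to establish
$\|S_2(\mathcal{H}f)\|_{L^p(\mathbb{R})}\le C_p\|f\|_{L^p(\mathbb{R})}$ for $1<p<\infty$, which I describe in detail.

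Fix a dyadic block $[2^j,2^{j+1}]$ and apply the Rademacher--Menshov dyadic decomposition of the $V_2$-norm on an interval: $V_{2,j}(\mathcal{H}f)(x)$ is dominated by $\sum_{m\ge0}\big(\sum_{I\in\mathcal{D}_m}|H_{r_I}f(x)-H_{l_I}f(x)|^2\big)^{1/2}$, where $\mathcal{D}_m$ is the collection of the $2^m$ equal subintervals $I=[l_I,r_I]$ of $[2^j,2^{j+1}]$. Each increment equals $f*K_I$ with $K_I(y)=\tfrac1{\pi y}\mathbf 1_{\{l_I<|y|<r_I\}}$, an odd (hence mean-zero) kernel supported in an annulus of width $2^{j-m}$ lying at distance $\sim 2^j$ from the origin. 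Writing $\vec K^{(m)}=(K_I)_I$ for the $\ell^2$-valued kernel whose index $I$ runs over the level-$m$ subintervals of all blocks, Minkowski's inequality gives the pointwise bound
$$S_2(\mathcal{H}f)(x)\ \lesssim\ \sum_{m\ge0}\|f*\vec K^{(m)}(x)\|_{\ell^2}.$$
The point of this reorganization is that, unlike the sharp truncations $H_\varepsilon$ themselves, the $K_I$ are supported away from the origin and so see the cancellation of $1/y$; the naive $g$-function estimate $V_{2,j}(\mathcal{H}f)\lesssim(\int_{2^j}^{2^{j+1}}t|\partial_tH_tf|^2\,dt)^{1/2}$, though pointwise correct, sums over $j$ to an operator unbounded on every $L^p$ and must be bypassed.

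It remains to show that $f\mapsto f*\vec K^{(m)}$ maps $L^p(\mathbb{R})$ into $L^p(\mathbb{R};\ell^2)$ with norm $O_p\big((1+m)^{A}2^{-m/2}\big)$ for a fixed $A$, which makes the series in $m$ convergent. On $L^2$ this is Plancherel together with $\sup_\xi\sum_I|\widehat{K_I}(\xi)|^2\lesssim(1+m)2^{-m}$, which follows from $\|K_I\|_{L^1}\lesssim 2^{-m}$ and the elementary bounds $|\widehat{K_I}(\xi)|\lesssim 2^{-m}\min\{1,2^j|\xi|\}$ (mean-zero plus support size) and $|\widehat{K_I}(\xi)|\lesssim(2^j|\xi|)^{-1}$ (one integration by parts), after summing the $2^m$ intervals in each block and then the blocks, grouped by $j+k$ with $2^k\sim|\xi|$. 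For $p\ne2$ one verifies a H\"ormander condition for $\vec K^{(m)}$ as an $\ell^2$-valued Calder\'on--Zygmund kernel: for every $y$ exactly one component of $\vec K^{(m)}(y)$ is nonzero, so $\|\vec K^{(m)}(y)\|_{\ell^2}=\tfrac1{\pi|y|}$, while $\int_{|y|>2|z|}\|\vec K^{(m)}(y-z)-\vec K^{(m)}(y)\|_{\ell^2}\,dy=O(m)$, the loss coming only from the $y$ lying within $|z|$ of one of the $2^m$ level-$m$ endpoints in a block. Vector-valued Calder\'on--Zygmund theory then upgrades the $L^2$ bound to $L^p$, $1<p<\infty$, with at most a polynomial-in-$m$ loss, and summing the geometric series yields $\|S_2(\mathcal{H}f)\|_{L^p}\lesssim C_p\|f\|_{L^p}$.

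For the $q$-variation with $q>2$: by Lemma \ref{lem:convert lemma} it suffices to adjoin to the short-variation bound a dyadic $\lambda$-jump estimate $\sup_{\lambda>0}\|\lambda\sqrt{N^d_\lambda(\mathcal{H}f)}\|_{L^p}\le C_p\|f\|_{L^p}$, which then produces the full $\lambda$-jump estimate $\sup_{\lambda>0}\|\lambda\sqrt{N_\lambda(\mathcal{H}f)}\|_{L^p}\le C_p\|f\|_{L^p}$, whence $\|V_q(\mathcal{H}f)\|_{L^p}\le C_{p,q}\|f\|_{L^p}$ for all $q>2$ by Lemma \ref{lemma1} with $\rho=2$. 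The dyadic jump estimate is obtained by the Fourier-transform and square-function scheme of \cite{DR86} and \cite{JSW08}: the multiplier of $H_{2^k}$ is $\widehat{k_1}(2^k\xi)$ with $\widehat{k_1}$ bounded, smooth away from the origin, and satisfying $|\widehat{k_1}(\eta)+i\,\mathrm{sgn}\,\eta|\lesssim|\eta|^{-1}$, which is exactly the decay that scheme requires. The main obstacle throughout is the non-integrability of the Hilbert kernel: cancellation cannot be harvested from a $g$-function and must instead be extracted from the increments over annuli bounded away from the origin, with the resulting sum over dyadic blocks balanced against Littlewood--Paley and Calder\'on--Zygmund input; keeping the H\"ormander constant of the $\ell^2$-valued kernel $\vec K^{(m)}$ controlled (only polynomial in $m$) is the technical crux.
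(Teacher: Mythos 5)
Your proposal is essentially correct, but note that the paper itself does not prove Lemma \ref{S21}: it is quoted from \cite[Theorem 1.4]{CJRW2000}, whose original argument is genuinely different --- it exploits the one--dimensional structure, writing the truncations $H_\varepsilon$ as combinations of convolution (averaging) operators and differential operators and then feeding in the variational estimates for averages, whereas you give a self-contained proof. Your short-variation argument --- the Rademacher--Menshov decomposition of $V_{2,j}$ into level-$m$ dyadic increments, the odd kernels $K_I$ supported in annuli away from the origin, the bounds $|\widehat{K_I}(\xi)|\lesssim\min\{2^{-m},\,2^{j-m}|\xi|,\,(2^j|\xi|)^{-1}\}$ yielding $\sup_\xi\sum_I|\widehat{K_I}(\xi)|^2\lesssim(1+m)2^{-m}$, and the $O(1+m)$ H\"ormander constant for the $\ell^2$-valued kernel (only the $y$ lying within $|z|$ of the $2^m$ endpoints of each block contribute beyond the smooth part) --- checks out, and it is closer in spirit to \cite{JSW08} than to \cite{CJRW2000}; what it buys is independence from the special one-dimensional identities, at the price of invoking vector-valued Calder\'on--Zygmund theory. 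The passage to $q$-variation via a dyadic jump estimate, Lemma \ref{lem:convert lemma} and Lemma \ref{lemma1} with $\rho=2$ is the same reduction the paper uses elsewhere, and the dyadic jump bound for $\mathcal H$ is indeed covered by the scheme of \cite{DR86}, \cite{JSW08} (it is the case $n=1$, $\Omega$ bounded, of Theorem B, or of Proposition \ref{pro:dyadic LlogL}).

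Two statements should be corrected, though neither is fatal. First, the inequality $|\widehat{k_1}(\eta)+i\,\mathrm{sgn}\,\eta|\lesssim|\eta|^{-1}$ is false: since $\widehat{k_1}(\eta)\to0$ as $|\eta|\to\infty$, the left-hand side tends to $1$ there. The correct pair of estimates is $|\widehat{k_1}(\eta)|\lesssim\min\{1,|\eta|^{-1}\}$ and $|\widehat{k_1}(\eta)+i\,\mathrm{sgn}\,\eta|\lesssim\min\{|\eta|,1\}$, and the dyadic jump estimate is then run not off a single decay bound but off the three-term decomposition $H_{2^k}=\phi_k\ast H-\phi_k\ast(H-H_{2^k})+(\delta_0-\phi_k)\ast H_{2^k}$, exactly as in Section 2: the low-frequency piece uses the vanishing of $\widehat{k_1}(\eta)+i\,\mathrm{sgn}\,\eta$ at $\eta=0$, the high-frequency piece the decay of $\widehat{k_1}$ at infinity. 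Second, the claimed $L^p$ norm $O_p((1+m)^{A}2^{-m/2})$ for $f\mapsto f\ast\vec K^{(m)}$ is not what your interpolation yields when $p\neq2$: interpolating the weak $(1,1)$ constant $O(1+m)$ against the $L^2$ constant $O((1+m)^{1/2}2^{-m/2})$ gives $O_p((1+m)2^{-\delta_p m})$ with some $\delta_p\in(0,1/2)$, which is still geometric, hence summable in $m$, so the conclusion stands. Finally, the reduction to dyadic partition points in the Rademacher--Menshov step should be justified by the continuity of $t\mapsto H_t f(x)$ for a.e.\ $x$, which does hold for $f\in L^p$.
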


\begin{proof}
Let us turn to the proof of Proposition \ref{pro:short H1}.  Since $\Omega\in H^1(\mathbf S^{n-1})$ and satisfies \eqref{can of O}, it is easy to check that we may write
$\Omega=\Omega_o+\Omega_e$, where $\Omega_o$ is odd and
$\Omega_e$ is even, and both of them belonging to $H^1(\mathbf S^{n-1})$ as well as satisfying cancelation condition (\ref{can of O}). Thus, we need only to prove Lemma \ref{pro:short H1} for $\Omega$ is odd and even, respectively. The following proof is based on the
idea of \cite{CJRW2002}, we cite some estimates there for the sake
of completeness.

 {\bf Case 1.} $\Omega$ is odd. For an
interval $I\subset(0,\infty)$, let
$$
H_I^1f(x)=\int_{|s|\in I}\frac{f(x-s\mathbf{1})}{s}ds,
$$
where $\mathbf{1}=(1,0,\cdots,0)$. For simplicity, we denote
$H^1_{(\varepsilon,\infty)}$ by $H^1_\varepsilon$, and set $\mathcal
H^1=\{H^1_\varepsilon\}_{\varepsilon>0}$. Let $y'=y/|y|\in\mathbf
S^{n-1}$ and $\sigma$ be the rotation on $\mathbb{R}^n$. We define the rotation of a function by
$(R_{\sigma} f)(x)=f(\sigma x)$. Let $d\sigma$ denote Haar measure on $SO(n)$, normalized
so that $\int_{SO(n)}d\sigma=|\mathbf S^{n-1}|$, the Lebesgue measure of $\mathbf S^{n-1}$. Then by \cite[p.222]{SW}, we have
\begin{equation}
\int_{|y|\in I}\frac{\Omega(y')}{|y|^n}f(x-y)dy=\frac12\int_{SO(n)}(R_{\sigma^{-1}}H_I^1R_{\sigma} f)(x)\Omega(\sigma\mathbf 1)d\sigma(y').
\end{equation}
For fixed $j\in\mathbb Z$, there is a partition $\{I_{i,j}\}=\{I_{i,j}(x)\}$ of $[2^j,2^{j+1}]$ such that (see \cite{CJRW2002})
\begin{align*}
V_{2,j}(\mathcal T f)(x)\le 2\bigg[\sum_{i}\big|\frac12\int_{
SO(n)}(R_{\sigma^{-1}}H_{I_{i,j}}^1R_{\sigma}
f)(x)\Omega(\sigma\mathbf 1)d\sigma(y')\big|^2\bigg]^{1/2}.
\end{align*}
Further, by Minkowski's inequality, we get the following estimate
\begin{equation}\label{rot}
S_2(\mathcal T f)(x)\le \int_{SO(n)}R_{\sigma^{-1}}S_2(\mathcal H^1R_{\sigma} f)(x)|\Omega(\sigma\mathbf
1)|d\sigma(y').
\end{equation}
For
$1<p<\infty$, using Minkowski's inequality and Lemma \ref{S21}, we have
\begin{align}
\nonumber\|S_2(\mathcal T f)\|_{L^p(\mathbb R^n)}&\le \int_{SO(n)}\bigg(\int_{\mathbb R^n}R_{\sigma^{-1}}\big|S_2(\mathcal
H^1R_{\sigma}
f)(x)\big|^pdx\bigg)^{1/p}|\Omega(\sigma\mathbf 1)|d\sigma(y')\\
\label{S2Lp}&\le C_p\int_{SO(n)}\|R_{\sigma}
f\|_{L^p(\mathbb R^n)}|\Omega(\sigma\mathbf 1)|d\sigma(y')\\
&\le
C_p\|\Omega\|_{L^1(\mathbf S^{n-1})}\|f\|_{L^p(\mathbb R^n)}.\nonumber
\end{align}

{\bf Case 2.} $\Omega$ is even. It is well known that
$f=-\sum_{i=1}^nR^2_if$ for any Schwartz function $f$, where $R_i$ denotes the Riesz transform. For fixed
$f$, we denote $-R_if$ by $g_i$. Let $\Phi$ be a infinitely differentiable function on $\mathbb{R}$
such that $\Phi(t)=0$ if $t<\frac14$, and $\Phi(t)=1$ if
$t>\frac34$. Moreover, $0\le \Phi(t)\le1$ for all $t\in\mathbb R$. Define
$$
M_i(x)=\lim_{\varepsilon\rightarrow0}\int_{|x-y|>\varepsilon}\frac{\Omega(y)}{|y|^n}\Phi(|y|)
\frac{x_i-y_i}{|x-y|^{n+1}}dy.
$$
By \cite[p.302]{CZ}, we know that
$$\int_{\mathbb
R^n}\frac{\Omega(x-y)}{|x-y|^n}\Phi\big(\frac{|x-y|}\varepsilon\big)f(y)dy=\frac1{\varepsilon^n}\int_{\mathbb
R^n}\sum_{i=1}^nM_i\big(\frac{x-y}\varepsilon\big)g_i(y)dy.$$
Hence we have
\begin{align*}
T_\varepsilon f(x)&=\sum_{i=1}^n\frac1{\varepsilon^n}\int_{\mathbb
R^n}M_i\big(\frac{x-y}\varepsilon\big)g_i(y)dy
-\int_{|x-y|<\varepsilon}\frac{\Omega(x-y)}{|x-y|^n}\Phi(\frac{|x-y|}\varepsilon)f(y)dy\\
&=:\sum_{i=1}^nM_{i,\varepsilon}\ast g_i(x)-T^0_\varepsilon f(x).
\end{align*}
Note that $S_2$ is subadditive, then
$$
S_2(\mathcal T f)(x)\le \sum_{i=1}^nS_2(\{M_{i,\varepsilon}\ast
g_i\})(x)+S_2(\{T^0_\varepsilon f\})(x).
$$
To estimate $S_2(\{M_{i,\varepsilon}\ast g_i\})\ (i=1,2,\cdots,n)$,
it suffices to consider the case $i=1$ and the other cases can be treated similarly.
Define
\begin{equation}\label{comp of R and T}
N(x)=\lim_{\varepsilon\rightarrow0}\int_{|x-y|>\varepsilon}\frac{\Omega(y)}{|y|^n}\frac{x_1-y_1}{|x-y|^{n+1}}dy.
\end{equation}
Write
\begin{align*}
M_{1,\varepsilon}\ast
g_1(x)&=\frac1{\varepsilon^n}\int_{|y|>\varepsilon}N(\frac y{\varepsilon})g_1(x-y)dy+\frac1{\varepsilon^n}
\int_{|y|\le\varepsilon}N\big(\frac{y}\varepsilon\big)\Phi\big(\frac{y}\varepsilon\big)g_1(x-y)dy\\
&\qquad+\frac1{\varepsilon^n}\int_{|y|\le\varepsilon}\big[M_1\big(\frac{y}\varepsilon\big)
-N\big(\frac{y}\varepsilon\big)\Phi\big(\frac{y}\varepsilon\big)\big]g_1(x-y)dy\\
&\qquad+\frac1{\varepsilon^n}\int_{|y|>\varepsilon}\big[M_1\big(\frac{y}\varepsilon\big)-N\big(\frac{y}\varepsilon\big)\big]g_1(x-y)dy\\
&=:N_\varepsilon\ast g_1(x)+N^{\Phi}_{\varepsilon}\ast
g_1(x)+\Delta_\varepsilon\ast g_1(x)+D_\varepsilon\ast g_1(x).
\end{align*}
 Hence, we are reduced to estimate the $L^p$ norm of the following terms
 $$
 S_2(\{N_\varepsilon\ast g_1\})(x),\ S_2(\{N^{\Phi}_{\varepsilon}\ast
g_1\})(x),\ S_2(\{\Delta_\varepsilon\ast g_1\})(x),\
S_2(\{D_\varepsilon\ast g_1\})(x),\ S_2(\{T^0_\varepsilon f\})(x).
 $$
The estimate of $\ S_2(\{N_\varepsilon\ast g_1\})$ depends on the following result.

\begin{lemma}\label{prop of N}{\rm(\cite{GS99})}
Suppose $\Omega$ satisfies \eqref{can of O} and belongs to
$H^1(\mathbf S^{n-1})$, $N(x)$ is given in \eqref{comp of R and T}.
Then $N$ satisfies the following properties:
\begin{itemize} \item[{\rm(i)}] $N(x)$ is a homogeneous function of
order $-n$ on $\mathbb{R}^n$;
\item[{\rm(ii)}] $N(-x)=-N(x),\ \  x\in \mathbb{R}^n$;
\item[{\rm(iii)}] $\displaystyle\int_{\mathbf S^{n-1}}\left|N(x')\right|d\sigma(x')\leq
C\|\Omega\|_{H^1(\mathbf S^{n-1})}$.
\end{itemize}
\end{lemma}

By the result showed in Case 1, Lemma \ref{prop of N} and the $L^p\,(1<p<\infty)$ boundedness of Riesz transform, we obtain the
estimate
\begin{align*}
\|S_2(\{N_\varepsilon\ast g_1\})\|_{L^p(\mathbb R^n)}\le
C_p\|g_1\|_{L^p(\mathbb R^n)}\le C_p\|f\|_{L^p(\mathbb R^n)}.
\end{align*}
\par
To estimate the other four terms, we give a lemma.
\begin{lemma}\label{S2varphi}
For any $u\in\mathbf S^{n-1}$, let $\varphi_u$ be a function defined on
$[0,\infty)$ and $\mathcal
F_u(s)=\{\frac1{\varepsilon}\varphi_{u}(\frac s{\varepsilon})\}_{\varepsilon>0}$.
If there exists a constant $C_p$ independent of $u$ such that
$$
\|S_2(\mathcal F_u\ast h)\|_{L^p(\mathbb R)}\le
C_p\|h\|_{L^p(\mathbb R)},\ 1<p<\infty,
$$
then
$$
\|S_2(\mathcal Gf)\|_{L^p(\mathbb R^n)}\le C_p\|f\|_{L^p(\mathbb
R^n)}, 1<p<\infty,
$$
where $\mathcal Gf=\{G_\varepsilon f\}_{\varepsilon>0}$ and
$$G_\varepsilon f(x)=\int_{\mathbf
S^{n-1}}\Omega(u)\int_0^\infty\varphi_u(t)f(x-\varepsilon
tu)dtd\sigma(u)$$ with $\Omega\in L^1(\mathbf S^{n-1})$.
\end{lemma}

\begin{proof} In the same way as the proof of (\ref{rot}), we have
\begin{align*}
S_2(\mathcal Gf)(x)\le\int_{SO(n)}|\Omega(\sigma\mathbf 1)|R_{\sigma^{-1}}S_2(\mathcal I^{\mathbf 1}R_{\sigma} f)(x)d\sigma(u),
\end{align*}
where $\sigma$ is the rotation such that $\sigma\mathbf
1=u$,
$$
\mathcal I^{\mathbf 1}f(x)=\{ I^{\mathbf 1}_\varepsilon
f(x)\}_{\varepsilon>0}\ \ and \ \ I^{\mathbf 1}_\varepsilon
f(x)=\int_0^\infty \frac1\varepsilon\varphi_{\mathbf
1}(r/\varepsilon)f(x-r{\mathbf 1})dr.
$$
By Minkowski's inequality and hypothesis, we have
\begin{align*}
\|S_2(\mathcal Gf)\|_{L^p(\mathbb R^n)}&\le\int_{SO(n)}|\Omega(\sigma\mathbf 1)|\bigg(\int_{\mathbb R^n}
|S_2(\mathcal I^{\mathbf 1}R_{\sigma}
f)(x)|^pdx\bigg)^{1/p}d\sigma(u)\\
&\le C_p\|\Omega\|_{L^1(\mathbf
S^{n-1})}\|f\|_{L^p(\mathbb R^n)}.
\end{align*}
\end{proof}
\par
Now we continue the proof of Proposition \ref{pro:short H1}. It was proved that all the kernels of $T^0_\varepsilon$,
$N^{\Phi}_{\varepsilon}$, $\Delta_\varepsilon$ and $D_\varepsilon$
have a representation that satisfies the hypothesis of Lemma 3.4  in
\cite[p.2124]{CJRW2002}, and
$$
\int_0^1t|\varphi_u'(t)|dt<\infty.
$$
 Thus, we get
$$\|S_2(\mathcal F_u\ast
h)\|_{L^p(\mathbb R)}\le C_p\|h\|_{L^p(\mathbb R)}$$ for $1<p<\infty$
according to Lemma 2.4 in \cite{CJRW2000}. Therefore, Lemma \ref{S2varphi} and the $L^p$ boundedness of Riesz transforms imply that
\begin{align*}
&\|S_2(\{N^{\Phi}_{\varepsilon}\ast g_1\})\|_{L^p(\mathbb
R^n)}+\|S_2(\{\Delta_\varepsilon\ast g_1\})\|_{L^p(\mathbb
R^n)}+\|S_2(\{D_\varepsilon\ast g_1\})\|_{L^p(\mathbb
R^n)}+\|S_2(\{T^0_\varepsilon f\})\|_{L^p(\mathbb R^n)}\\
&\le
C_p\|f\|_{L^p(\mathbb R^n)}.
\end{align*}
This completes the proof of Proposition \ref{pro:short H1} and \eqref{Vqr} follows.
\end{proof}

\subsection{Proof of Theorem \ref{thm:LlogL}(ii)}

As explained at the beginning of this section, it suffices to deal with dyadic $\lambda$-jump estimate. 
\begin{proposition}\label{pro:dyadic LlogL}
Let $\mathcal T$ be given as
in \eqref{tr of S} with $\Omega\in L\log^+L(\mathbf S^{n-1})$ satisfying \eqref{can of O}. Then
\begin{align*}
\|\lambda \sqrt{N^d_\lambda(\mathcal Tf)}\|_{L^p(\mathbb R^n)}\le
C_{p}\|f\|_{L^p(\mathbb R^n)}.
\end{align*}
\end{proposition}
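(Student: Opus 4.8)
The plan is to follow the square-function approach of Jones--Seeger--Wright, but since $\Omega\in L\log^+L(\mathbf S^{n-1})$ does not give any decay of $\widehat\nu$, I would first decompose the kernel into pieces with polynomial Fourier decay, as in Al-Salman--Pan. Write $\Omega=\sum_{m\ge 1}\Omega_m$ where each $\Omega_m$ is supported on the set where $2^{m-1}\le|\Omega|<2^m$ (together with the cancellation-restoring constant), so $\|\Omega_m\|_{L^1(\mathbf S^{n-1})}\lesssim 2^m\sigma(\{|\Omega|\ge 2^{m-1}\})=:2^m\theta_m$ and $\|\Omega_m\|_{L^\infty}\lesssim 2^m$; the $L\log^+L$ hypothesis is precisely $\sum_m m\,2^m\theta_m<\infty$. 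Set $\nu^{(m)}$ to be the corresponding truncated kernel measure on $1\le|x|\le 2$ and $\nu^{(m)}_j=2^{-jn}\nu^{(m)}(2^{-j}\cdot)$, so that $T_{2^j}f=\sum_m\sum_{k\ge j}\nu^{(m)}_k*f$ after the standard reduction of the dyadic jump estimate to the differences $T_{2^{k}}f-T_{2^{j}}f$. For each fixed $m$, the single-scale kernel satisfies the two bounds $|\widehat{\nu^{(m)}}(\xi)|\lesssim \|\Omega_m\|_{L^1}\min(|\xi|,|\xi|^{-\delta_m})$ for some $\delta_m>0$ degrading like $1/m$ (this is the van der Corput / Al-Salman--Pan estimate, using $\|\Omega_m\|_\infty\lesssim 2^m\|\Omega_m\|_1/\theta_m$), and $|\widehat{\nu^{(m)}}(\xi)|\lesssim\|\Omega_m\|_{L^1}$ trivially.

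Next I would invoke the abstract dyadic-jump criterion from \cite{JSW08}: to bound $\|\lambda\sqrt{N^d_\lambda(\{\sum_{k\ge j}\mu_k*f\}_j)}\|_{L^p}$ it suffices to control, on $L^2$, a long square function $(\sum_j|\sum_{k}(\phi_{j-k}*\mu_k)*f|^2)^{1/2}$-type object together with the oscillation governed by the Littlewood--Paley pieces, and to have $L^p$ bounds for the associated maximal and square functions. Concretely, splitting $\sum_{k\ge j}\nu^{(m)}_k = (\text{a nice average, e.g. } 2^{-jn}\|\Omega_m\|_1\cdot\mathbf 1_{|x|\le 2^j}\text{-type term}) + (\text{error})$, the "main" part is handled by the already-known jump/variation results for averaging operators in \cite{JRW00,CJRW2000} with the extra factor $\|\Omega_m\|_{L^1(\mathbf S^{n-1})}$, while the error part is estimated via the square function $\mathcal G_m f=(\sum_j|\Delta^{(m)}_j f|^2)^{1/2}$ with $\Delta^{(m)}_j=\sum_{k\ge j}\nu^{(m)}_k - \sum_{k\ge j+1}\nu^{(m)}_{k}$-type martingale differences. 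On $L^2$ one uses Plancherel and the two Fourier bounds above: the decay $|\xi|^{-\delta_m}$ gives $\|\mathcal G_m f\|_2\lesssim \sqrt{1/\delta_m}\,\|\Omega_m\|_1\|f\|_2\lesssim \sqrt m\,\|\Omega_m\|_1\|f\|_2$. On $L^p$, $p\ne 2$, one controls $\mathcal G_m$ by the rough maximal function $M^*_{\Omega_m}$ and its vector-valued extension, paying at most $\|\Omega_m\|_{L^\infty}$ or a power thereof; interpolation between the sharp $L^2$ bound and these crude $L^p$ bounds yields a bound on $\|\mathcal G_m f\|_p$ that grows only polynomially in $m$ times $\|\Omega_m\|_{L^1}$.

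Finally I would sum in $m$. Combining the single-scale dyadic-jump estimate for the $m$-th piece, which after the $L^2$/$L^p$ interpolation takes the shape
\[
\Big\|\lambda\sqrt{N^d_\lambda(\mathcal T^{(m)}f)}\Big\|_{L^p(\mathbb R^n)}\le C_{p,n}\,m^{C}\,\|\Omega_m\|_{L^1(\mathbf S^{n-1})}\,\|f\|_{L^p(\mathbb R^n)}
\]
for a fixed power $C$, with the subadditivity of $N^d_\lambda$ in the family (one loses at worst a constant when splitting into finitely many subfamilies, and the tail is absorbed), summing over $m\ge 1$ gives the factor $\sum_m m^{C}2^m\theta_m$. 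This is finite precisely when $\Omega\in L(\log^+L)^{C}(\mathbf S^{n-1})$; the delicate point — and the main obstacle — is to arrange the decomposition and the $L^2$ estimates so that the power $C$ of $m$ lost per piece is exactly $1$ (or is otherwise absorbed by the $L\log^+L$, not a higher power of the logarithm), which forces one to track constants carefully in the van der Corput estimate for $\widehat{\nu^{(m)}}$ and in the $g$-function/maximal-function bounds rather than using them as black boxes. This sharp bookkeeping for each piece, as the authors indicate in the Introduction, is exactly the novel ingredient over \cite{JSW08}. Once $C=1$ is secured, $\sum_m m\,2^m\theta_m<\infty$ is the $L\log^+L$ hypothesis and the proof of Proposition \ref{pro:dyadic LlogL} is complete, whence \eqref{N12} follows by Lemma \ref{lem:convert lemma} together with Proposition \ref{pro:short H1}.
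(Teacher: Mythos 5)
Your overall skeleton (the Al-Salman--Pan decomposition of $\Omega$ into pieces $\Omega_m$ with Fourier decay degrading like $1/m$, square functions, interpolation, and summation over $m$ against the $L\log^+L$ condition) is indeed the strategy the paper follows for the error terms, but your treatment of the main term does not work. You propose to split the kernel $\sum_{k\ge j}\nu^{(m)}_k$ into a ``nice average'' of the type $2^{-jn}\|\Omega_m\|_{L^1}\chi_{\{|x|\le 2^j\}}$ plus an error, handling the average by the known jump/variation results for averaging operators. But $T_{2^j}$ is the full tail of a singular integral: it is not an averaging operator plus a square-function-controllable error, and the averaging-operator results of \cite{JRW00}, \cite{CJRW2000} say nothing about it. The workable comparison (used in the paper, following \cite{JSW08}) is $T_{2^k}f=\phi_k\ast Tf-\phi_k\ast\sum_{l<0}\nu_{k+l}\ast f+\sum_{s\ge0}(\delta_0-\phi_k)\ast\nu_{k+s}\ast f$ with $\phi_k$ a Littlewood--Paley smoothing, and the main term then requires two ingredients absent from your proposal: the jump inequality for the family $\{\phi_k\ast g\}_k$ (Lemma \ref{var of the 1.1}, whose proof via comparison with dyadic martingales and a vector-valued Calder\'on--Zygmund decomposition for the dual range occupies a substantial part of the section), applied to $g=Tf$, together with the $L^p$-boundedness of the rough singular integral $T$ for $\Omega\in L\log^+L$ from \cite{CZ}.

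The second gap is that the quantitative heart of the proposition is precisely the step you leave undone. You acknowledge that everything hinges on the per-piece loss being $O(m)$, but the mechanism you sketch cannot deliver it: interpolating an $L^2$ bound that already loses $\sqrt m$ (with no remaining decay to spend) against $L^p$ bounds that pay $\|\Omega_m\|_{L^\infty}$ ``or a power thereof'' produces a loss exponential in $m$, since after normalization $\|\Omega_m\|_{L^\infty}$ is of size $2^m/\|\Omega\|_{L^1(E_m)}$. The paper avoids this by never letting $\|\Omega_m\|_{L^\infty}$ enter the $L^p$ theory: it keeps the scale parameter $s$, proves $\|G_s^{(m)}f\|_{L^2}\lesssim 2^{1/(3m)}2^{-s/(3m)}\|f\|_{L^2}$ from the interpolated Fourier bound \eqref{f of vsm}, proves $\|G_s^{(m)}f\|_{L^{p_1}}\lesssim m\|f\|_{L^{p_1}}$ using the rough maximal operator on $L^{p_2}$ with constant controlled by $\|\Omega_m\|_{L^1}\le 2$, the vector-valued Lemma \ref{bs} of Duoandikoetxea--Rubio de Francia, and Lemma \ref{bs0} of Al-Salman--Pan, whose explicit constant at exponent $\gamma=1/(3m)$ is what yields a factor $m$ rather than $2^{cm}$; interpolating in $s$, summing the geometric series, and invoking the elementary Lemma \ref{bs1} then gives a total loss of exactly $Cm$, so that $\sum_m m\|\Omega\|_{L^1(E_m)}\le\|\Omega\|_{L\log^+L}$ closes the argument. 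Without these specific lemmas and this bookkeeping the summation over $m$ cannot be carried out, so as written the proposal does not constitute a proof.
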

\begin{proof}
For
$j\in\mathbb Z$,  define a measure $\nu_{j}$ by
$$
\nu_j\ast f(x)=\int_{2^j\le |y|<2^{j+1}}\frac{\Omega(y)}{|y|^n}f(x-y)dy.
$$
Obviously, for $k\in\mathbb Z$,
$$
T_{2^k}f(x)=\int_{|y|\ge
2^k}\frac{\Omega(y)}{|y|^n}f(x-y)dy=\sum_{j\ge k}\nu_j\ast f(x).
$$
Let $\phi\in \mathscr{S}(\mathbb R^n)$ be a radial function such that $\hat{\phi}(\xi)=1$ for
$|\xi|\le2$ and $\hat{\phi}(\xi)=0$ for $|\xi|>4$. We have the following decomposition
\begin{align*}
T_{2^k}f&=\phi_k\ast Tf-\phi_k\ast\sum_{l<0}\nu_{k+l}\ast
f+\sum_{s\ge0}(\delta_0-\phi_k)\ast\nu_{k+s}\ast f\\
&:=T^1_kf-T^2_kf+T_k^3f,
\end{align*}
where $\phi_k$ satisfies $\widehat{\phi_k}(\xi)=\hat{\phi}(2^k\xi)$ and $\delta_0$ is the Dirac measure at 0.
$\mathscr T^if$ denotes the family $\{T^i_kf\}_{k\in\mathbb Z}$  for
$i=1,2,3$. Obviously, to show Proposition \ref{pro:dyadic LlogL} it suffices to prove the following inequalities:
\begin{align}\label{i=1,2,3}
\|\lambda [N_{\lambda/3}^{d}(\mathscr T^if)]^{1/2}\|_{L^p(\mathbb
R^n)}\le C_p\|f\|_{L^p(\mathbb R^n)},\ \ 1<p<\infty,\ \ i=1,2,3.
\end{align}

\medskip

\noindent {\bf Estimate of \eqref{i=1,2,3} for $i=1$.}
We need the following result, which has been essentially included in Theorem 1.1(i) of \cite{JSW08}. However some details there were omitted, we add these details for completeness.
\begin{lemma}\label{var of the 1.1}
Let $\phi_k$ be given above, set $\mathscr Uf=\{\phi_k\ast f\}_{k}$. Then for $1<p<\infty$,
\begin{align}\label{Uf estimate}
\|\lambda\sqrt{N_\lambda(\mathscr Uf)}\|_{L^p(\mathbb R^n)}\le C_p\|f\|_{L^p(\mathbb R^n)} \end{align}
holds uniformly in $\lambda>0$.
\end{lemma}
If we accept Lemma \ref{var of the 1.1} for a moment and combine it with the $L^p$-boundedness of $T$ (see \cite{CZ}), we can get the following estimate easily
\begin{align*}
\|\lambda[N^d_\lambda(\mathscr T^1f)]^{1/2}\|_{L^p(\mathbb
R^n)}=\|\lambda[N_\lambda(\{\phi_k\ast Tf\})]^{1/2}\|_{L^p(\mathbb
R^n)}\le C_p\| Tf\|_{L^p(\mathbb R^n)}\le C_p\|f\|_{L^p(\mathbb R^n)}.
\end{align*}
Hence, to estimate \eqref{i=1,2,3} for $i=1$ it remains to prove Lemma \ref{var of the 1.1}.
\begin{proof}
We borrow some notations and results from \cite[pp.6724]{JSW08}. For $j\in\mathbb Z$ and $\beta=(m_1,\cdots,m_n)\in\mathbb Z^n$,  we denote the dyadic cube $\prod_{k=1}^n(m_k2^j,(m_k+1)2^j]$ in $\mathbb R^n$ by $Q_\beta^j$, and the set of all dyadic cubes with side-length $2^j$ by $\mathcal D_j$. The conditional expectation of a local integrable $f$ with respect to $\mathcal D_j$ is given by
$$
\mathbb E_jf(x)=\sum_{Q\in \mathcal D_j}\frac1{|Q|}\int_{Q}f(y)dy\cdot\chi_{Q}(x)
$$
for all $j\in\mathbb Z$.
Note that $N_\lambda$ is subadditive, then
\begin{equation}\nonumber
N_\lambda(\mathscr Uf)\le N_{\lambda/2}(\mathscr Df)+N_{\lambda/2}(\mathscr Ef),
\end{equation}
where
$$
\mathscr Df=\{\phi_k\ast f-\mathbb E_kf\}_k \quad \text{and}\quad  \mathscr Ef=\{\mathbb E_kf\}_k.
$$
The following inequality is the $\lambda$-jump estimate for dyadic martingales (see, for instance, \cite{PiXu88}),
$$
\|\lambda\sqrt{N_{\lambda/2}(\mathscr{E}f)}\|_{L^p(\mathbb R^n)}\leq C_p\|f\|_{L^p(\mathbb R^n)},\ 1<p<\infty,
$$
 uniformly in $\lambda$. Next, observe that
\begin{equation}\nonumber
\lambda\sqrt{N_{\lambda/2}(\mathscr{D}f)}\le C\big(\sum_{k\in\mathbb Z}|\phi_k\ast f-\mathbb E_kf|^2\big)^{1/2}:=\mathcal Sf.
\end{equation}
\par
On the other hand, for $x\in{\Bbb R}^n$ and $j\in\Bbb Z$, let
$\mathbb D_jf(x)=\mathbb E_{j}f(x)-\mathbb E_{j-1}f(x)$. Thus, by $f\in L^p({\Bbb R}^n)$ and the Lebesgue differential theorem we see that
$$f(x)=-\sum_j\mathbb D_jf(x)\qquad \text{a. e.}\ x\in{\Bbb R}^n.$$
By Lemma 3.2 in \cite{JSW08}, we have
\begin{align}
\nonumber\|\mathcal Sf\|_{L^2(\mathbb R^n)}&\le \big(\sum_k(\sum_j\|\phi_k\ast \mathbb D_jf(x)-\mathbb E_k\mathbb D_jf\|_{L^2(\mathbb R^n)})^2\big)^{1/2}\\
\nonumber&\le\big(\sum_k(\sum_j2^{-\theta|k-j|}\|\mathbb D_jf\|_{L^2(\mathbb R^n)})^2\big)^{1/2}\\
\label{S22}&\le C_\theta(\sum_j\|\mathbb D_jf\|_{L^2(\mathbb R^n)}^2)^{1/2}\le C_\theta\|f\|_{L^2(\mathbb R^n)}
\end{align}
for some $\theta>0$.
\par
Jones {\it et al} have proved the following weak-type $(1,1)$ estimate for $\mathcal S$,
\begin{equation}\label{w11}
\alpha |\{x\in \mathbb R^n:\mathcal Sf(x)>\alpha\}|\le C\|f\|_{L^1(\mathbb R^n)},
\end{equation}
which is (33) in \cite{JSW08}. By interpolation, (\ref{S22}), (\ref{w11}), imply all the $L^p$ bounds for $1<p\le2$.
\par
For $2<p<\infty$, by H\"{o}lder's inequality, we have
\begin{align*}
\big\|\big(\sum_{k\in\mathbb Z}|\phi_k\ast f-\mathbb E_kf|^2\big)^{1/2}\big\|_{L^p(\mathbb R^n)}&=\sup_{\|\{h_k\}\|_{L^{p'}(l^2)}\le1}\big|\int_{\mathbb R^n}\sum_{k}[\phi_k\ast f(x)-\mathbb E_kf(x)]h_k(x)dx\big|\\
&=\sup_{\|\{h_k\}\|_{L^{p'}(l^2)}\le1}\big|\int_{\mathbb R^n}\sum_{k}[\phi_k\ast h_k(y)-\mathbb E_kh_k(y)]f(y)dy\big|\\
&\le\sup_{\|\{h_k\}\|_{L^{p'}(l^2)}\le1}\|\sum_{k}[\phi_k\ast h_k-\mathbb E_kh_k]\|_{L^{p'}(\mathbb R^n)}\|f\|_{L^p(\mathbb R^n)}.
\end{align*}
It suffices to show that
\begin{align}\label{sdp'}
\|\sum_{k}[\phi_k\ast h_k-\mathbb E_kh_k]\|_{L^{p'}(\mathbb R^n)}\le C\|\{h_k\}\|_{L^{p'}(l^2)},\ \ 1<p'\le 2.
\end{align}
Clearly, the following estimate is a consequence  $L^2$ boundedness of $\mathcal S$ by duality
\begin{align}\label{sd2}
\|\sum_{k}[\phi_k\ast h_k-\mathbb E_kh_k]\|_{L^2(\mathbb R^n)}\le C\|\{h_k\}\|_{L^2(l^2)}.
\end{align}
Therefore, we just need to prove that, if $\{h_k\}\in L^1(l^2)$, then
\begin{align}\label{sd1}
\big|\{x\in\mathbb R^n:|\sum_{k}[\phi_k\ast h_k(x)-\mathbb E_kh_k(x)]|>\alpha\}\big|\le \frac C{\alpha}\|\{h_k\}\|_{L^1(l^2)},
\end{align}
where $\alpha>0$ and $C$ is independent of $\alpha$ and $\{h_k\}$.
In fact, by interpolation between (\ref{sd2}) and (\ref{sd1}), we get (\ref{sdp'}).

 \par
  For $\alpha>0$, we perform Calder\'{o}n-Zygmund decomposition of $\|\{h_k\}\|_{l^2}$ at height $\alpha$, then there exists $\Lambda\subseteq\mathbb Z\times\mathbb Z^n$ such that the collection of dyadic cubes $\{Q_\beta^j\}_{(j,\beta)\in\Lambda}$ are disjoint and the following hold:
  \begin{itemize}
  \item[{\rm(i)}]
  $|\bigcup_{(j,\beta)\in\Lambda}Q_\beta^j|\le \alpha^{-1}\|\{h_k\}\|_{L^1(l^2)}$;
  \item[{\rm(ii)}]
  $\|\{h_k(x)\}\|_{l^2}\le \alpha$, if $x\not\in\bigcup_{(j,\beta)\in\Lambda}Q_\beta^j$;
   \item[{\rm(iii)}]
   $\frac1{|Q_\beta^j|}\int_{Q_\beta^j}\|\{h_k(x)\}\|_{l^2}dx\le2^n\alpha$ for each $(j,\beta)\in\Lambda$.
   \end{itemize}
 For $k\in\mathbb Z$, we set
\begin{equation}\nonumber
 g^{(k)}(x)=\left\{
 \begin{array}{ll}
 h_k(x),& \text{if}\ x\not\in \bigcup_{(j,\beta)\in\Lambda}Q_\beta^j,\\
 \frac1{|Q_\beta^j|}\int_{Q_\beta^j}h_k(y)dy,& \text{if}\ x\in Q_\beta^j,(j,\beta)\in\Lambda.
 \end{array}
 \right.
 \end{equation}
 and
 \begin{equation}\nonumber
 b^{(k)}(x)=\sum_{(j,\beta)\in\Lambda}[h_k(x)-\mathbb E_jh_k(x)]\chi_{Q_\beta^j}(x):=\sum_{(j,\beta)\in\Lambda}b^{(k)}_{j,\beta}(x).
 \end{equation}
 First we have $\|\{g^{(k)}\}\|^2_{L^2(l^2)}\le 2\alpha\|\{h_k\}\|_{L^1(l^2)}$. In fact, by (ii),(iii) and Minkowski's inequality,
 \begin{align*}
 \|\{g^{(k)}\}\|^2_{L^2(l^2)}&=\int_{(\cup_{(j,\beta)\in\Lambda}Q_\beta^j)^c}\|\{h_k(x)\}\|^2_{l^2}dx+\sum_{(j,\beta)\in\Lambda}\int_{Q_\beta^j}\sum_k\big|\frac1{|Q_\beta^j|}\int_{Q_\beta^j}h_k(y)dy\big|^2dx\\
 &\le \alpha\int_{(\cup_{(j,\beta)\in\Lambda}Q_\beta^j)^c}\|\{h_k(x)\}\|_{l^2}dx+2^n\alpha\sum_{(j,\beta)\in\Lambda}\int_{Q_\beta^j}\|h_k(x)\|_{l^2}dx\\
 &\le 2^n\alpha\|\{h_k\}\|_{L^1(l^2)}.
 \end{align*}
 On the other hand, it is easy to see that
 $$\int_{\Bbb R^n}b^{(k)}_{j,\beta}(x)dx=0\quad \text{for all}\quad k\in\mathbb Z,\ (j,\beta)\in\Lambda,$$
 and $\|\{b^{(k)}\}\|_{L^1(l^2)}\le 2\|\{h_k\}\|_{L^1(l^2)}$. In fact, by (iii) and Minkowski's inequality,
 \begin{align*}
 \|\{b^{(k)}\}\|_{L^1(l^2)}&=\sum_{(j,\beta)\in\Lambda}\int_{Q_\beta^j}\big(\sum_k|h_k(x)-\mathbb E_jh_k(x)|^2\big)^{1/2}dx\\
 &\le \sum_{(j,\beta)\in\Lambda}\int_{Q_\beta^j}(\sum_k|h_k(x)|^2)^{1/2}dx+\sum_{(j,\beta)\in\Lambda}\int_{Q_\beta^j}\big(\sum_k|\frac1{|Q_\beta^j|}\int_{Q_\beta^j}h_k(y)dy|^2\big)^{1/2}dx\\
 &\le 2\|\{h_k\}\|_{L^1(l^2)}.
 \end{align*}
Thus, for above $\alpha$, by (\ref{sd2}),

\begin{align*}
\alpha^2\big|\{x\in \mathbb R^n:|\sum_{k}[\phi_k\ast g^{(k)}(x)-\mathbb E_kg^{(k)}(x)]|>\alpha\}\big|&\le C\big\|\sum_{k}[\phi_k\ast g^{(k)}-\mathbb E_kg^{(k)}]\big\|_{L^2(\mathbb R^n)}^2\\
&\le C\|\{g^{(k)}\}\|_{L^2(l^2)}^2\le C\alpha\|\{h_k\}\|_{L^1(l^2)}.
\end{align*}

So, we get
\begin{equation}\nonumber
\big|\{x\in \mathbb R^n:|\sum_{k}[\phi_k\ast g^{(k)}(x)-\mathbb E_kg^{(k)}(x)]|>\alpha\}\big|\le \frac C{\alpha}\|\{h_k\}\|_{L^1(l^2)}.
\end{equation}
Let $\tilde{Q}_\beta^j$ be the cube concentric with $Q_\beta^j$ and with side length $4$ times that of $Q_\beta^j$. It is obvious that
\begin{equation}
\big|\bigcup_{(j,\beta)\in\Lambda}\tilde{Q}_\beta^j\big|\le C\sum_{(j,\beta)\in\Lambda}|Q_\beta^j|\le \frac C{\alpha}\|\{h_k\}\|_{L^1(l^2)}.
\end{equation}

Note that $\mathbb E_kb^{(k)}_{j,\beta}$ is supported in $Q_\beta^j$ when $k\le j$ and $\mathbb E_kb^{(k)}_{j,\beta}$ vanishes everywhere when $k\ge j$.

\begin{align*}
\alpha\big|\{x\not\in\bigcup\tilde{Q}_\beta^j:|\sum_{k}[\phi_k\ast b^{(k)}(x)-\mathbb E_kb^{(k)}(x)]|>\alpha\}\big|\le C\sum_{(j,\beta)\in\Lambda}\sum_k\int_{(\tilde{Q}_\beta^j)^c}|\phi_k\ast b_{j,\beta}^{(k)}(x)|dx.
\end{align*}
By \cite[(34) and (35),\,p.6726]{JSW08}, the inequality
\begin{align*}
\int_{(\tilde{Q}_\beta^j)^c}|\phi_k\ast b_{j,\beta}^{(k)}(x)|dx\le C2^{-\delta|j-k|}\int_{\mathbb R^n}|b_{j,\beta}^{(k)}(x)|dx
\end{align*}
holds for some $\delta>0$.
Applying H\"{o}lder's inequality, we get
\begin{align*}
\sum_{(j,\beta)\in\Lambda}\sum_k\int_{(\tilde{Q}_\beta^j)^c}|\phi_k\ast b_{j,\beta}^{(k)}(x)|dx&\le C\sum_{(j,\beta)\in\Lambda}\sum_k2^{-\delta|j-k|}\int_{\mathbb R^n}|b_{j,\beta}^{(k)}(x)|dx\\
&\le C\sum_{(j,\beta)\in\Lambda}\int_{\mathbb R^n}\big(\sum_k|b_{j,\beta}^{(k)}(x)|^2\big)^{1/2}dx\\
&\le C\sum_{(j,\beta)\in\Lambda}\|b_{j,\beta}\|_{L^1(l^2)}\le C\|\{h_k\}\|_{L^1(l^2)}.
\end{align*}
This completes the proof of Lemma \ref{var of the 1.1}.
\end{proof}

\medskip

\noindent {\bf Estimate of \eqref{i=1,2,3} for $i=3$.} First, let us give a decomposition of $\Omega\in L\log^+\!\!L(\mathbf S^{n-1})$ satisfying \eqref{can of O},
which can be found in \cite{AP02} or \cite{AACP}.
For $m\in\mathbb N$, denote
  $$E_m=\{y'\in \mathbf S^{n-1}:2^{m-1} \le|\Omega(y')|<2^{m}\}$$
and
$$
\Omega_m=\|\Omega\|_{L^1(E_m)}^{-1}\bigg[\Omega\chi_{E_m}-\frac 1{|\mathbf S^{n-1}|}\int_{E_m}\Omega d\sigma\bigg].
$$
  Set
  $$\Gamma=\{m\in\mathbb N:\sigma(E_m)>2^{-4m}\}\ \text{
and}\ \
\Omega_0=\Omega-\sum_{m\in\Gamma}\|\Omega\|_{L^1(E_m)}\Omega_m.
$$
\begin{lemma}{\rm (see \cite{AP02} or \cite{AACP})}\label{dec of O}
Suppose that $\Omega\in L\log^+\!\!L(\mathbf S^{n-1})$ satisfying \eqref{can of O}. Then $\Omega_0$ and $\Omega_m\ (m\in\Gamma)$ defined above satisfy the following properties:
\item\qquad {\rm (i)} $\int_{\mathbf S^{n-1}}\Omega_md\sigma=0$, $\|\Omega_m\|_{L^1(\mathbf S^{n-1})}\le2$ and $\|\Omega_m\|_{L^2(\mathbf S^{n-1})}\le C2^{2m}$, where $C=C(n)$ is independent of $m$;
\item\qquad {\rm (ii)} $\int_{\mathbf S^{n-1}}\Omega_0d\sigma=0$ and $\Omega_0\in L^2(\mathbf S^{n-1})$;
\item\qquad {\rm (iii)} $\sum_{m\in\Gamma}m\|\Omega\|_{L^1(E_m)}\le \|\Omega\|_{L\log^+\!\!L(\mathbf S^{n-1})}$.
\end{lemma}
We define measure $\nu_j^{(m)}$  by
$$
\nu_j^{(m)}\ast f(x)=\int_{2^j<|y|\le2^{j+1}}\frac{\Omega_m(y)}{|y|^n}f(x-y)dy
$$
for $j\in\mathbb Z$ and $m\in\{0\}\bigcup\Gamma$. Then we have the following pointwise estimate
\begin{align*}
\lambda[N_\lambda^d(\mathscr T^3f)(x)]^{1/2}&\le\sum_{s\ge0}\big(\sum_k\big|[(\delta_0-\phi_k)\ast\nu_{k+s}^{(0)}]\ast
f(x)\big|^2\big)^{1/2}\\
&\;\;\;\;+\sum_{s\ge0}\sum_{m\in\Gamma}\|\Omega\|_{L^1(E_m)}\big(\sum_k\big|[(\delta_0-\phi_k)\ast\nu_{k+s}^{(m)}]\ast
f(x)\big|^2\big)^{1/2}\\
&:=\sum_{s\ge0}G_s^{(0)}f(x)+\sum_{m\in\Gamma}\|\Omega\|_{L^1(E_m)}\sum_{s\ge0}G_s^{(m)}f(x).
\end{align*}
We are reduced to establish estimate of $\|G_s^{(m)}f\|_{L^p(\mathbb R^n)}$ as sharp as possible so that we are able to sum up over $s\geq0$ and $m\in\{0\}\bigcup\Gamma$. Let us start with the case $m\in\Gamma$. We first prove a rapid decay estimate of $\|G_s^{(m)}f\|_{L^2(\mathbb R^n)}$.

For $m\in\Gamma$, Al-Salman and Pan \cite{AP02} proved that
$$
|\widehat{\nu_s^{(m)}}(\xi)|\le
C\min\big\{\|\Omega_m\|_{L^1(\mathbf S^{n-1})},\|\Omega_m\|_{L^2(\mathbf S^{n-1})}[2^s|\xi|]^{-1/3},\|\Omega_m\|_{L^1(\mathbf S^{n-1})}2^s|\xi|\big\}.
$$
By (i) of Lemma \ref{dec of O} and interpolation, we have
\begin{equation}\label{f of vsm}
|\widehat{\nu_s^{(m)}}(\xi)|\le
C\min\{[2^s|\xi|]^{-\frac1{3m}},2^s|\xi|\}.
\end{equation}
It is
trivial that $ |(1-\widehat{\phi})(\xi)|\le\min\{1,|\xi|\}$. To dominate $\|G_s^{(m)}f\|_{L^2(\mathbb R^n)}$, we use above two
estimates and get
\begin{align*}
&\sum_k\big|(1-\widehat{\phi})(2^k\xi)\big|^2\big|\widehat{\nu^{(m)}}(2^{s+k}\xi)\big|^2
\\
&\le\sum_{2^k\le\frac1{2^s|\xi|}}(2^{s+k}|\xi|)^2(2^k|\xi|)^2
+\sum_{\frac1{2^s|\xi|}<2^k\le\frac1{|\xi|}}(2^{s+k}|\xi|)^{-\frac2{3m}}(2^k|\xi|)^2
+\sum_{2^k\ge\frac1{|\xi|}}(2^{s+k}|\xi|)^{-\frac2{3m}}\\
&\le 2^{-2s}+2^{\frac2{3m}}(2^{-\frac{2s}{3m}}-2^{-2s})+2^{-\frac{2s}{3m}}\\
&\le C2^{\frac2{3m}}2^{-\frac{2s}{3m}}.
\end{align*}
Plancherel's theorem implies that
\begin{align}\label{l2 estimate of } \|G_s^{(m)}f\|_{L^2(\mathbb R^n)}\le
C2^{\frac1{3m}}2^{-\frac{s}{3m}}\|f\|_{L^2(\mathbb R^n)}.
\end{align}

To proceed with the estimate of $\|G_s^{(m)}f\|_{L^p(\mathbb R^n)}$, we need three well known or easily checked lemmas below, which will be used in the following proof.
\begin{lemma}{\rm(\cite[p.\,157]{AP02})}\label{bs0} Let $\{\sigma_j\}_{j\in\mathbb{Z}}$ be a
sequence of finite Borel measures. Suppose that there are $\gamma\in(0,1]$ and $p_0\in(2,\infty)$ such that
\item\qquad {\rm (i)} $\|\sigma_j\|\le C$;
\item\qquad {\rm (ii)} $|\widehat{\sigma_j}(\xi)|\le C\min\{|2^j\xi|^\gamma,|2^j\xi|^{-\gamma}\}$;
\item\qquad {\rm (iii)} $\big\|\big(\sum_j|\sigma_j*g_j|^2\big)^{1/2}\big\|_{L^{p_0}}\le C\big\|\big(\sum_j|g_j|^2\big)^{1/2}\big\|_{L^{p_0}}. $

Then, for $p\in(p_0',p_0)$, there exists a constant $C_p$ such that
$$\big\|\big(\sum_j|\sigma_j*f|^2\big)^{1/2}\big\|_{L^{p}}\le C_p2^{\alpha_p\gamma}\big(\frac{2^{\alpha_p\gamma}+1}{2^{\alpha_p\gamma}-1}\big)\|f\|_{L^{p}}, $$
where
$$\alpha_p=\bigg\{ \begin{array}{ll}
 (\frac1p-\frac1{p_0})/(\frac12-\frac1{p_0}),&\qquad p>2,\\
(\frac1p-\frac1{p_0'})/(\frac12-\frac1{p_0'}),&\qquad  p\le2.
\end{array}
$$
\end{lemma}
\begin{lemma}{\rm(\cite[p.\,544]{DR86})}\label{bs} Suppose that $\{\sigma_j\}_{j\in\mathbb{Z}}$ is a
sequence of finite Borel measures. If the maximal operator
$\sigma^{*}(f)=\displaystyle\sup_j||\sigma_j|*f|$ is bounded on
$L^{p_0}$ for $1<p_0\le\infty$, then
$$\big\|\big(\sum_j|\sigma_j*g_j|^2\big)^{1/2}\big\|_{L^p}\le C\big\|\big(\sum_j|g_j|^2\big)^{1/2}\big\|_{L^p}, $$
where $p$ satisfies $\frac1{2p_0}=\big|\frac12-\frac1p\big|$ and $C$
depends on the $L^{p_0}$ norm of  $\sigma^{*}$.
\end{lemma}
\begin{lemma}\label{bs1} Let $a>1$, $t\in(0,1]$ and $\theta\in(0,1]$. Then
$t^\theta a^{2\theta t}\le C_{a,\theta}(a^{\theta t}-1).$
\end{lemma}

Now we estimate $\|G_s^{(m)}f\|_{L^p(\mathbb R^n)}$. For $1<p<\infty$ and $p\neq2$, there exists a $\theta_1\in(0,1)$ and $1<p_1<\infty$ such that $1/p=(1-\theta_1)/{p_1}+\theta_1/2$. For fixed $p_1$, we choose $p_0\in(2,\infty)$ such that
$p_1\in(p_0',p_0)$. Further, choose $p_2\in(1,\infty)$ so that $1/(2p_2)=|1/2-1/p_0|$.
It is trivial that
\begin{align*}
\big|[(\delta_0-\phi_k)\ast\nu_{k+s}^{(m)}]\big|&\le2\|\nu_s^{(m)}\|\le C\|\Omega_m\|_{L^1(\mathbf S^{n-1})}\le C.
\end{align*}
On the other hand, note that $s\ge0$ and $m\ge1$,
\begin{align*}
|[(\delta_0-\phi_k)\ast\nu_{k+s}^{(m)}]^\wedge(\xi)|&\le C\min\{1,|2^k\xi|\}
\min\{|2^{k+s}\xi|,1,|2^{k+s}\xi|^{-\frac1{3m}}\}\\
&\le C\min\{|2^k\xi|,1,|2^k\xi|^{-\frac1{3m}}\}\\
&\le C\min\{|2^k\xi|^{\frac1{3m}},|2^k\xi|^{-\frac1{3m}}\}.
\end{align*}
By the $L^p$ boundedness of maximal function with rough
kernel (see \cite{CZ}) and Lemma \ref{dec of O}, we have
$$
\big\|\sup_{k}\big||[(\delta_0-\phi_k)\ast\nu_{k+s}^{(m)}]|\ast f\big|\big\|_{L^{p_2}(\mathbb R^n)}\le
C_{p_2}\|\Omega_m\|_{L^1(\mathbf S^{n-1})}\|f\|_{L^{p_2}(\mathbb R^n)}\le C_{p_2}\|f\|_{L^{p_2}(\mathbb R^n)},
$$
which, together with Lemma \ref{bs} and Lemma \ref{bs0}, implies
\begin{align}
\nonumber\|G_s^{(m)}f\|_{L^{p_1}(\mathbb
R^n)}&=\|\big(\sum_k\big|[(\delta_0-\phi_k)\ast\nu^{(m)}_{k+s}]\ast
f\big|^2\big)^{1/2}\|_{L^{p_1}(\mathbb R^n)}\\
\label{Gp}& \le C_{p_1}2^{\frac{\alpha_{p_1}}{3m}}\bigg(\frac{2^{\frac{\alpha_{p_1}}{3m}}+1}{2^{\frac{\alpha_{p_1}}{3m}}-1}\bigg)
\|f\|_{L^{p_1}(\mathbb R^n)}\\ \nonumber
&\le C'_{p_1}\bigg(\frac{2^{\frac{2\alpha_{p_1}}{3m}}}{2^{\frac{\alpha_{p_1}}{3m}}-1}\bigg)\|f\|_{L^{p_1}(\mathbb R^n)}\\\nonumber
&\le C''_{p_1}m\|f\|_{L^{p_1}(\mathbb R^n)},
\end{align}
the last inequality was obtained by Lemma \ref{bs1} with $a=2^{\frac{\alpha_{p_1}}{3}}$, $\theta=1$ and $t=1/m$.

Now by interpolation between (\ref{l2 estimate of }) and (\ref{Gp}), we obtain
$$\|G_s^{(m)}f\|_{L^p(\mathbb R^n)}\le C_pm^{1-\theta_1}2^{\frac{\theta_1}{3m}}2^{-\frac{\theta_1 s}{3m}}\|f\|_{L^p(\mathbb R^n)}.$$
Applying Lemma \ref{bs1} with $a=2^{\frac13}$, $\theta=\theta_1$ and $t=1/m$, we conclude that
\begin{align*}
\|\sum_{s\ge0}G_s^{(m)}f\|_{L^p(\mathbb R^n)}&\le C_pm^{1-\theta_1}2^{\frac{\theta_1}{3m}}\sum_{s\ge0}2^{-\frac{\theta_1 s}{3m}}\|f\|_{L^p(\mathbb
R^n)}\\
&\le C_pm^{1-\theta_1}\frac{2^{\frac{2\theta_1}{3m}}}{2^{\frac{\theta_1}{3m}}-1}\|f\|_{L^p(\mathbb
R^n)}\le C_pm\|f\|_{L^p(\mathbb R^n)}.
\end{align*}
In the same way, we can deal with
$G_s^{(0)}f$ for $s\geq0$ and obtain
\begin{align*}
\|\sum_{s\ge0}G_s^{(0)}f\|_{L^p(\mathbb R^n)}\le C_p\sum_{s\ge0}2^{-\frac{\theta_1 s}{3}}\|f\|_{L^p(\mathbb
R^n)}\le C_p\|f\|_{L^p(\mathbb
R^n)}.
\end{align*}

Finally, using Lemma \ref{dec of O}, we get
\begin{align*}
\|\lambda[N_\lambda^d(\mathscr T^3f)]^{1/2}\|_{L^p(\mathbb
R^n)}&\le\|\sum_{s\ge0}G_s^{(0)}f\|_{L^p(\mathbb
R^n)}+\sum_{m\in\Gamma}\|\Omega\|_{L^1(E_m)}\|\sum_{s\ge0}G_s^{(m)}f\|_{L^p(\mathbb
R^n)}\\
&\le C_p\big[1+\sum_{m\in\Gamma}m\|\Omega\|_{L^1(E_m)}\big]\|f\|_{L^p(\mathbb
R^n)}\le C_p[1+\|\Omega\|_{L\log^+L(\mathbf S^{n-1})}]\|f\|_{L^p(\mathbb
R^n)}.
\end{align*}

\medskip

\noindent{\bf Estimate of \eqref{i=1,2,3} for $i=2$.}
Similarly, we have the following pointwise estimate
\begin{align*}
&\lambda[N_\lambda^d(\mathscr T^2f)(x)]^{1/2}\\
&\le\sum_{l<0}\big(\sum_k\big|[\phi_k\ast\nu_{k+l}^{(0)}]\ast
f(x)\big|^2\big)^{1/2}
+\sum_{l<0}\sum_{m\in\Gamma}\|\Omega\|_{L^1(E_m)}\big(\sum_k\big|[\phi_k\ast\nu_{k+l}^{(m)}]\ast
f(x)\big|^2\big)^{1/2}\\
&:=\sum_{l<0}G_l^{(0)}f(x)+\sum_{m\in\Gamma}\|\Omega\|_{L^1(E_m)}\sum_{l<0}G_l^{(m)}f(x).
\end{align*}
For $m\in\Gamma$ and $l<0$, we first estimate the $L^2$ bounds of $G_l^{(m)}f$. Note that $|\hat{\phi}(\xi)|\le 2\chi_{\{|\xi|<4\}}(\xi)$. By Plancherel's theorem and (\ref{f of vsm}),
\begin{align*}
\|G_l^{(m)}f\|_{L^2(\mathbb R^n)}&\le \|f\|_{L^2(\mathbb R^n)}\sup_\xi\big(\sum_k\big|\widehat{\phi}(2^k\xi)\big|^2\big|\widehat{\nu^{(m)}}(2^{l+k}\xi)\big|^2\big)^{1/2}\\
&\le C\|f\|_{L^2(\mathbb R^n)}\sup_\xi\big(\sum_{2^k\le\frac4{|\xi|}}(2^{l+k}|\xi|)^2\big)^{1/2}\le C2^l\|f\|_{L^2(\mathbb R^n)}.
\end{align*}
Next, we consider $\|G_l^{(m)}f\|_{L^p(\mathbb R^n)}$, where $p$ is that in case $i=3$. Obviously, we have
\begin{align*}
\|\phi_k\ast\nu_{k+l}^{(m)}\|&\le C\ \text{and}\
|[\phi_k\ast\nu_{k+l}^{(m)}]^\wedge(\xi)|\le C\min\{|2^k\xi|,|2^k\xi|^{-1}\}.
\end{align*}
In the same way, $
\big\|\sup_{k}\big||\phi_k\ast\nu_{k+l}^{(m)}|\ast f\big|\big\|_{L^{p_2}(\mathbb R^n)}\le C\|f\|_{L^{p_2}(\mathbb R^n)}.
$
The following inequality is a consequence of Lemma \ref{bs0} and Lemma \ref{bs}
\begin{align}\label{Gslm}
\|G_l^{(m)}f\|_{L^{p_1}(\mathbb R^n)}\le C_{p_1}\|f\|_{L^{p_1}(\mathbb
R^n)}.
\end{align}
Interpolating between above $L^{p_1}$ estimate and $L^2$ estimate, and summing over $l$, we get
$$\|\sum_{l<0}G_l^{(m)}f\|_{L^p(\mathbb R^n)}\le C_p\sum_{l<0}2^{\theta_1l}\|f\|_{L^p(\mathbb R^n)}\le C_p\|f\|_{L^p(\mathbb R^n)}.$$
The quantity
$\|\sum_{l<0}G_l^{(0)}f\|_{L^p(\mathbb
R^n)}$ can be treated in the same way. Finally, using Lemma \ref{dec of O}, we get
\begin{align*}
\|\lambda[N_\lambda^d(\mathscr T^2f)]^{1/2}\|_{L^p(\mathbb
R^n)}&\le\|\sum_{l<0}G_l^{(0)}f\|_{L^p(\mathbb
R^n)}+\sum_{m\in\Gamma}\|\Omega\|_{L^1(E_m)}\|\sum_{l<0}G_l^{(m)}f\|_{L^p(\mathbb
R^n)}\\
&\le C_p[1+\|\Omega\|_{L\log^+L(\mathbf S^{n-1})}]\|f\|_{L^p(\mathbb
R^n)}.
\end{align*}
\end{proof}

\section{Proof of Theorem \ref{thm:Omega alpha}}

By Lemma \ref{lem:convert lemma}, to show Theorem \ref{thm:Omega alpha} it suffices to prove the following two propositions.

\begin{proposition}\label{pro:dyadic Omega}
Let $\mathcal T$ be given as in (\ref{tr of S}), $\Omega$ satisfies (\ref{can of O}) and $\Omega\in
\mathcal{G}_\alpha(\mathbf S^{n-1})$ for some $\alpha>0$.Then for $\frac{(2\alpha+1)(1+\alpha)}{2\alpha^2+\alpha+1/2}<p<\frac{2(1+\alpha)(2\alpha+1)}{4\alpha+1}$,
\begin{equation}
\|\lambda\sqrt{N^{d}_{\lambda}(\mathcal T f)}\|_{L^p(\mathbb
R^n)}\leq C_p\|f\|_{L^p(\mathbb R^n)}
\end{equation}
 uniformly in $\lambda>0$.
\end{proposition}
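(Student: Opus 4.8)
The plan is to follow the architecture of Proposition \ref{pro:dyadic LlogL} step by step, feeding in $\mathcal G_\alpha$-adapted inputs in place of the $L\log^+\!\!L$ ones. Fix a radial $\phi\in\mathscr S(\mathbb R^n)$ with $\widehat\phi=1$ on $\{|\xi|\le2\}$ and $\widehat\phi=0$ on $\{|\xi|>4\}$, set $\widehat{\phi_k}(\xi)=\widehat\phi(2^k\xi)$, and decompose exactly as there
$$T_{2^k}f=\phi_k\ast Tf-\phi_k\ast\sum_{l<0}\nu_{k+l}\ast f+\sum_{s\ge0}(\delta_0-\phi_k)\ast\nu_{k+s}\ast f=:T^1_kf-T^2_kf+T^3_kf,$$
reducing matters to the dyadic $\lambda$-jump bound for each family $\mathscr T^if=\{T^i_kf\}_{k\in\mathbb Z}$. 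One preliminary observation streamlines the rest: the two endpoints in the statement are conjugate exponents and the larger one exceeds $2$ precisely when $\alpha>1/2$; hence the claimed $p$-interval is empty unless $\alpha>1/2$, and one may assume this from now on.

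The family $\mathscr T^1$ is disposed of by combining Lemma \ref{var of the 1.1} with the $L^p(\mathbb R^n)$-boundedness of the rough singular integral $T$: for $\Omega\in\mathcal G_\alpha(\mathbf S^{n-1})$ the maximal singular integral, which dominates $T$, is bounded on $L^p$ for $(1+2\alpha)/(2\alpha)<p<1+2\alpha$ (see \cite{GS98}, \cite{FGP}), and a short computation with conjugate exponents shows this interval contains the $p$-range of the proposition whenever $\alpha>1/2$; thus $\|\lambda\sqrt{N^d_\lambda(\mathscr T^1f)}\|_{L^p}\le C_p\|Tf\|_{L^p}\le C_p\|f\|_{L^p}$. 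The family $\mathscr T^2$ is handled exactly as in the $L\log^+\!\!L$ case: the factor $\widehat{\phi_k}$ confines the support to $|2^k\xi|\lesssim1$, so $\widehat{\nu_{k+l}}$ is evaluated at arguments of size $\le2^{l+2}$, where \eqref{can of O} gives $|\widehat{\nu_{k+l}}(\xi)|\le C2^l|2^k\xi|$; Plancherel yields an $L^2$ bound with factor $C2^l$, Lemmas \ref{bs0} and \ref{bs}---together with the $L^p$-boundedness (all $1<p<\infty$) of the rough maximal operator $M^\ast$, valid since $\Omega\in L^1(\mathbf S^{n-1})$ forces $M^\ast$ to be dominated by an average over directional one-dimensional maximal operators---upgrade it to $L^p$ with the same factor, and $\sum_{l<0}2^l<\infty$.

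The substance lies in $\mathscr T^3$. As before $\lambda\sqrt{N^d_\lambda(\mathscr T^3f)(x)}\le\sum_{s\ge0}G_sf(x)$ with $G_sf=(\sum_k|[(\delta_0-\phi_k)\ast\nu_{k+s}]\ast f|^2)^{1/2}$, so it suffices to bound $\|G_sf\|_{L^p}$ by $a_s\|f\|_{L^p}$ with $\sum_sa_s<\infty$. The one quantitative input available is the Grafakos--Stefanov decay $|\widehat\nu(\xi)|\le C(\log|\xi|)^{-1-\alpha}$ for $|\xi|\ge2$. Since $1-\widehat\phi(2^k\xi)$ vanishes on $\{|2^k\xi|\le2\}$, on the support of the $k$-th summand of $G_sf$ one has $|2^{k+s}\xi|\ge2^{s+1}$, so $\widehat\nu$ is always in the large-argument regime; summing in $k$ turns the logarithmic decay in $|\xi|$ into polynomial decay in $s$ and gives $\sup_\xi\sum_k|\,\widehat{(\delta_0-\phi_k)\ast\nu_{k+s}}(\xi)|^2\le C(s+1)^{-1-2\alpha}$, hence by Plancherel $\|G_sf\|_{L^2}\le C(s+1)^{-\frac12-\alpha}\|f\|_{L^2}$, which is already summable in $s$ because $\alpha>1/2$.

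The main obstacle---and the part I expect to require genuine work---is to interpolate this $L^2$ gain against $L^p$ bounds for $p$ away from $2$. The difficulty is that $\widehat\nu$ decays only logarithmically, so Lemma \ref{bs0} cannot be applied directly to $(\delta_0-\phi_k)\ast\nu_{k+s}$, in sharp contrast with the $L\log^+\!\!L$ case where each building block $\nu^{(m)}_s$ carried a genuine polynomial decay $[2^s|\xi|]^{-1/(3m)}$. The remedy I would use is a secondary Littlewood--Paley decomposition $\nu_{k+s}=\sum_{t}\nu^{(t)}_{k+s}$ according to the size $|2^{k+s}\xi|\approx2^t$ (only $t\gtrsim s$ contribute, again by the vanishing of $1-\widehat\phi$). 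Each piece is honestly localized to a dyadic frequency annulus, has multiplier bounded by $C(s+t)^{-1-\alpha}$, has uniformly bounded $L^1$-norm, and has a uniformly $L^p$-bounded rough maximal function, so Lemmas \ref{bs0}, \ref{bs} and the Littlewood--Paley inequality provide, for each fixed $t$, both an $L^2$ bound with the factor $(s+t)^{-1-\alpha}$ and an $L^{p_1}$ bound with no loss. Interpolating the two, then resumming the two-parameter family carefully over $t$ and over $s$---exploiting the orthogonality of the frequency-localized pieces rather than a crude count of the number of terms, since the naive resummation would only reach $\alpha>1$---produces weighted $L^p$ bounds for $G_sf$ that are summable in $s$ exactly when $p$ lies in $\big(\tfrac{(2\alpha+1)(1+\alpha)}{2\alpha^2+\alpha+1/2},\ \tfrac{2(1+\alpha)(2\alpha+1)}{4\alpha+1}\big)$; pinning down this sharp range through the bookkeeping of the constants is the analogue of the ``subtle calculations'' mentioned in the introduction. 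Assembling the contributions of $\mathscr T^1,\mathscr T^2,\mathscr T^3$ then completes the proof.
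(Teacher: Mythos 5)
Your reduction, your treatment of $\mathscr T^1$ and $\mathscr T^2$, and your $L^2$ analysis of $\mathscr T^3$ all match the paper's argument in substance: the observation that the stated $p$-range is empty unless $\alpha>1/2$ is correct, the containment of that range in the boundedness range of $T$ (you use the maximal-operator range from \cite{FGP}; the paper uses the slightly larger range $(2+2\alpha)/(1+2\alpha)<p<2+2\alpha$ from \cite{GS98}) is checked correctly, the $\mathscr T^2$ computation is the paper's, and your rederivation of $\|G_sf\|_{L^2(\mathbb R^n)}\le C(1+s)^{-\alpha-1/2}\|f\|_{L^2(\mathbb R^n)}$ agrees with the cited estimate \eqref{2 estimates rough f}.

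The gap is exactly where you say you "expect genuine work": the $L^p$ theory of $G_s$ for $p\neq2$. The paper does \emph{not} need any two-parameter resummation. It proves (by the argument of \cite[p.461]{GS98} and \cite[p.77]{FGP}: Littlewood--Paley decomposition of $f$, the vector-valued bound of Lemma \ref{bs} fed by the rough maximal operator, and interpolation in the Littlewood--Paley parameter using the multiplier bounds $|[(\delta_0-\phi_k)\ast\nu_{k+s}]^\wedge(\xi)|\le C\min\{|2^k\xi|,(\ln|2^k\xi|)^{-1-\alpha}\}$, which hold uniformly in $s$) that $\|G_sf\|_{L^{p_1}}\le C_{p_1}\|f\|_{L^{p_1}}$ \emph{uniformly in} $s$ for every $p_1\in\big(\tfrac{2+2\alpha}{1+2\alpha},2+2\alpha\big)$; it then interpolates this uniform bound against \eqref{2 estimates rough f} to get $\|G_sf\|_{L^p}\le C_p(1+s)^{-\theta_p(\alpha+1/2)}\|f\|_{L^p}$, and the summability condition $\theta_p(\alpha+1/2)>1$ together with the constraint on $p_1$ is precisely what produces the endpoints $\tfrac{(2\alpha+1)(1+\alpha)}{2\alpha^2+\alpha+1/2}$ and $\tfrac{2(1+\alpha)(2\alpha+1)}{4\alpha+1}$. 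Your substitute --- a secondary frequency decomposition of $\nu_{k+s}$, piecewise interpolation, and a resummation over $(s,t)$ "exploiting orthogonality" --- is not carried out: you concede that the naive version only reaches $\alpha>1$, and for $p\neq2$ there is no orthogonality available to square-sum the frequency-localized pieces, so the promised recovery of the sharp range is precisely the unproved step. The correct fix is the paper's order of operations: keep the $s$-decay only at $L^2$, establish a uniform (no decay) $L^{p_1}$ bound for the whole square function $G_s$ for each fixed $s$, and interpolate in $s$ at the level of $G_s$ itself; as written, your proof of the proposition is incomplete at its central point.
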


\begin{proposition}\label{pro:short Omega}
Let $\mathcal T$ and $\Omega$ be given as in Proposition \ref{pro:dyadic Omega} but with $\alpha>1$. Then for
$(3+\alpha)/(1+\alpha)<p<(3+\alpha)/2$, we have
\begin{equation}\label{short O p}
\|S_{2}(\mathcal{T}f)\|_{L^p(\mathbb R^n)}\leq C_p\|f\|_{L^p(\mathbb
R^n)}.
\end{equation}
\end{proposition}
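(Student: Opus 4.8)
The plan is to reduce the short $2$-variation bound \eqref{short O p} to an $L^p$ estimate for a square function built from rough spherical averages, and then to establish the latter by vector-valued Calder\'on--Zygmund theory. Concretely: for nice $f$ the map $t\mapsto T_tf(x)$ is absolutely continuous on each block $[2^j,2^{j+1}]$, with $T_{2^j}f(x)-T_tf(x)=\int_{2^j}^{t}A_sf(x)\,\frac{ds}{s}$, where $A_sf(x):=\int_{\mathbf S^{n-1}}\Omega(\theta)f(x-s\theta)\,d\theta$ is the rough spherical average; the cancellation \eqref{can of O} gives $A_s(1)=0$, which is what makes the square function below converge. Cauchy--Schwarz on any admissible chain $t_1<\dots<t_N$ in $[2^j,2^{j+1}]$ yields $\sum_l|T_{t_{l+1}}f(x)-T_{t_l}f(x)|^2\le(\log2)\int_{2^j}^{2^{j+1}}|A_sf(x)|^2\frac{ds}{s}$, hence $V_{2,j}(\mathcal Tf)(x)^2\le(\log2)\int_{2^j}^{2^{j+1}}|A_sf(x)|^2\frac{ds}{s}$ and, summing over $j$,
$$S_2(\mathcal Tf)(x)\le(\log2)^{1/2}\,G^\Omega f(x),\qquad G^\Omega f(x):=\Big(\int_0^\infty|A_sf(x)|^2\,\frac{ds}{s}\Big)^{1/2}.$$
So it suffices to show $\|G^\Omega f\|_{L^p(\mathbb R^n)}\le C_p\|f\|_{L^p(\mathbb R^n)}$ for $(3+\alpha)/(1+\alpha)<p<(3+\alpha)/2$.

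Next I would realize $G^\Omega$ as a regularized vector-valued singular integral. With $\mathbb H:=L^2(\mathbb R_+,ds/s)$ one has $G^\Omega f(x)=\|\{A_sf(x)\}_{s>0}\|_{\mathbb H}$, and $f\mapsto\{A_sf\}_s$ is convolution against the $\mathbb H$-valued kernel $y\mapsto\{d\mu_s(y)\}_s$, where $d\mu_s$ denotes $\Omega\,d\sigma$ dilated by $s$. This kernel is carried by spheres, so it is not in $\mathbb H$ pointwise and the H\"ormander condition makes no sense as it stands; one regularizes by a Littlewood--Paley resolution $\{\rho_k\}$ ($\widehat{\rho_k}(\xi)=\widehat\rho(2^k\xi)$, $\widehat\rho$ supported in $\{1/2\le|\xi|\le2\}$, $\sum_k\widehat\rho(2^k\xi)^2=1$ for $\xi\ne0$) and, pairing the scale $s\approx2^j$ of $A_s$ with the frequency annulus $|\xi|\approx2^{-(j+\ell)}$, writes $G^\Omega f\le\sum_{\ell\in\mathbb Z}T_\ell f$ with $T_\ell f(x)^2=\sum_j\int_{2^j}^{2^{j+1}}\big|\rho_{j+\ell}\ast A_s\rho_{j+\ell}\ast f(x)\big|^2\frac{ds}{s}$. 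For each $\ell$, the kernel of $T_\ell$ is at every scale smooth and compactly frequency-supported, so vector-valued Calder\'on--Zygmund theory and the $\mathbb H$-valued analogues of Lemma~\ref{bs} and Lemma~\ref{bs0} apply.

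For the pieces I would prove two estimates. First an $L^2$ bound with decay, $\|T_\ell f\|_2\le c_\ell\|f\|_2$: by Plancherel and bounded overlap of the $\widehat{\rho_{j+\ell}}$, $c_\ell$ is controlled by $\sup_{|\eta|\approx2^{-\ell}}|\widehat{\Omega\,d\sigma}(\eta)|$; the cancellation \eqref{can of O} gives $c_\ell\lesssim2^{-\ell}$ for $\ell\ge0$, while for $\ell<0$ the Grafakos--Stefanov decay $|\widehat{\Omega\,d\sigma}(\eta)|\lesssim(\log|\eta|)^{-(1+\alpha)}$ (which holds because $\Omega\in\mathcal G_\alpha$; see \cite{GS98}, \cite{GS99}) gives $c_\ell\lesssim(1+|\ell|)^{-(1+\alpha)}$. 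Second an $L^{p_0}$ bound, for $p_0$ in a range about $2$ determined by $\alpha$ and with at most polynomial growth in $\ell$, obtained by dominating each annular average by the rough maximal operator $M^*_\Omega$ (bounded on $L^{p_0}$ for $\mathcal G_\alpha$ kernels, see \cite{GS98}) and invoking Lemma~\ref{bs} and Lemma~\ref{bs0}. Interpolating the two and summing over $\ell$, the series $\sum_\ell\|T_\ell\|_{L^p\to L^p}$ converges precisely on the window $\big((3+\alpha)/(1+\alpha),(3+\alpha)/2\big)$; since the decay $(1+|\ell|)^{-(1+\alpha)}$ for $\ell<0$ is only borderline summable this forces $\alpha>1$, and the window comes out H\"older-dual because the square function estimate is essentially self-dual.

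The two rough-kernel points flagged in the Introduction are exactly where the work lies. First, getting the $L^2$ decay with the sharp power $(1+\alpha)$: no regularity of $\Omega$ is at hand and $\widehat{\Omega\,d\sigma}$ decays only logarithmically, so the bound must be extracted directly from the defining inequality of $\mathcal G_\alpha$, by splitting $\mathbf S^{n-1}$ according to the size of $\log\frac{1}{|\theta\cdot\xi'|}$. Second, verifying the H\"ormander condition for the regularized $\mathbb H$-valued kernels with enough uniformity in $\ell$ to survive the summation: the raw integral $\int_{|y|>2|z|}\|\mathbf K(y-z)-\mathbf K(y)\|_{\mathbb H}\,dy$ is only logarithmically divergent and must be absorbed by the frequency localization, the natural device being to split $\Omega=\Omega\chi_{\{|\Omega|\le\lambda\}}+\Omega\chi_{\{|\Omega|>\lambda\}}$ and optimize in $\lambda$ (the bounded part carrying an honest H\"ormander bound, the tail being small in $L^1$), in the spirit of the $L\log^+\!L$ decomposition used earlier in the paper. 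Making both work uniformly in $\ell$, and confirming that the final bookkeeping leaves the stated nonempty $p$-range, is the crux of the argument.
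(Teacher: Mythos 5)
There is a genuine gap, and it occurs at the very first step and then again, fatally, at the key $L^2$ estimate. Your reduction $S_2(\mathcal Tf)\lesssim G^\Omega f$ with $G^\Omega f=\big(\int_0^\infty|A_sf|^2\,ds/s\big)^{1/2}$, $A_sf(x)=\int_{\mathbf S^{n-1}}\Omega(\theta)f(x-s\theta)\,d\sigma(\theta)$, is a correct pointwise inequality, but it discards exactly the information that makes the theorem true: it controls the $2$-variation purely by the $t$-derivative, i.e. by pure spherical averages. Your subsequent $L^2$ decay for the high-frequency pieces rests on the claim that $\Omega\in\mathcal G_\alpha$ gives $|\widehat{\Omega\,d\sigma}(\eta)|\lesssim(\log|\eta|)^{-(1+\alpha)}$. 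That is not what Grafakos--Stefanov prove: their logarithmic decay is for the annular measure $\nu$ with density $\Omega(y')|y|^{-n}\chi_{\{1\le|y|\le2\}}$, where the radial integration supplies the factor $\min\{1,|\theta\cdot\xi|^{-1}\}$ which the $\mathcal G_\alpha$ condition then converts into log decay. The spherical measure $\Omega\,d\sigma$ has no radial oscillation to exploit, and decay of $\widehat{\Omega\,d\sigma}$ would have to come from curvature, which requires control of $\Omega$ far beyond $\mathcal G_\alpha$. Indeed, take $\Omega$ (with the cancellation \eqref{can of O}) essentially uniformly distributed on two thin bands $\{|\theta\cdot e_1\mp\epsilon|<\eta\}$: its $\mathcal G_\alpha$ norm stays bounded as $\eta\to0$, yet $\widehat{\Omega\,d\sigma}(s e_1)\approx\sin(2\pi\epsilon s)$ for $s\lesssim\eta^{-1}$, so there is no logarithmic decay and $\int|\widehat{\Omega\,d\sigma}(se_1)|^2\,ds/s$ can be made as large as one wishes (and, by superposing bands at many scales, infinite for a single fixed $\Omega\in\bigcap_\alpha\mathcal G_\alpha$). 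So the sum over your high-frequency pieces $\ell<0$ cannot be made to converge by this route; the square function $G^\Omega$ itself is not controlled under the hypotheses. (Separately, even granting your claimed rates $(1+|\ell|)^{-(1+\alpha)}$ at $L^2$ and $(1+|\ell|)$ at $L^{p_0}$, the interpolation bookkeeping would produce a window of the form $\big(\tfrac{2+\alpha}{1+\alpha},2+\alpha\big)$-type requiring only $\alpha>0$, not the stated $\big(\tfrac{3+\alpha}{1+\alpha},\tfrac{3+\alpha}{2}\big)$ with $\alpha>1$; this mismatch is a symptom of the same loss.)

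The paper avoids this loss by never passing to the pure derivative square function. After localizing in frequency ($S_{2,k}$, pairing scale $2^j$ with frequencies $|\xi|\approx2^{k-j}$), the $L^2$ bound uses the inequality $\|\mathfrak a\|_{V_2}\le\|\mathfrak a\|_{L^2}^{1/2}\|\mathfrak a'\|_{L^2}^{1/2}$ (a special case of (39) in \cite{JSW08}): the derivative factor is only bounded (this is where your $A_s$ appears, estimated trivially by $\|\Omega\|_{L^1}$ via \eqref{dft}), while the undifferentiated factor $\nu_{j,t}\ast\Phi_{j,k}f$ carries the genuine Grafakos--Stefanov decay \eqref{f of v a} of the truncated, radially integrated kernel, yielding $(1+|k|)^{-(1+\alpha)/2}$. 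The geometric mean of ``no decay'' and ``full log decay'' is precisely what produces the exponent $(1+\alpha)/2$, and interpolating against the H\"ormander-type $L^p$ bound $C_p(1+|k|)$ (the vector-valued Calder\'on--Zygmund step, which your outline does anticipate) gives the window $\big(\tfrac{3+\alpha}{1+\alpha},\tfrac{3+\alpha}{2}\big)$ and forces $\alpha>1$. To repair your argument you would have to keep the undifferentiated term in play (e.g. via this $V_2$ interpolation inequality or an equivalent device) rather than dominate the block variation by $\int_{2^j}^{2^{j+1}}|A_sf|^2\,ds/s$ alone; as written, the proposed proof cannot be completed.
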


\subsection{Proof of Proposition \ref{pro:dyadic Omega}}

We use again Fourier transform and square function estimates. Note that in the present case, the Fourier transform of the measure $\nu$ defined at the beginning of Section 2 has logarithmic decay (see (9) in \cite{GS98}), which is better than the case $\Omega\in L\log^+ L(\mathsf S^{n-1})$, but worse than the case $\Omega\in L^r(\mathsf S^{n-1})$ ($r>1$). Some estimates from \cite{FGP} and \cite{GS99} are taken for granted here. Let us start the proof.

Let measure $\nu_j$ and operator $T_{2^k}$ be defined as in the proof of Lemma \ref{pro:dyadic LlogL} with $\Omega\in\mathcal{G}_\alpha(\mathbf S^{n-1})$. Let $\phi$ be a Schwartz function such that $\hat{\phi}(\xi)=1$ for
$|\xi|\le2$ and $\hat{\phi}(\xi)=0$ for $|\xi|>4$. We have the following decomposition
\begin{align*}
T_{2^k}f&=\phi_k\ast T_{\Omega}f-\phi_k\ast\sum_{l<0}\nu_{k+l}\ast
f+\sum_{s\ge0}(\delta_0-\phi_k)\ast\nu_{k+s}\ast f\\
&:=T^1_kf-T^2_kf+T_k^3f,
\end{align*}
where $\phi_k$ satisfies $\widehat{\phi_k}(\xi)=\hat{\phi}(2^k\xi)$ and $\delta_0$ is the Dirac measure at 0.
$\mathscr T^if$ denotes the family $\{T^i_kf\}_{k\in\mathbb Z}$  for
$i=1,2,3$. Clearly, we need to prove the inequalities below
\begin{align}\label{Ga i=1,2,3}
\|\lambda [N_{\lambda/3}^{d}(\mathscr T^if)]^{1/2}\|_{L^p(\mathbb
R^n)}\le C_p\|f\|_{L^p(\mathbb R^n)},\ \ i=1,2,3,
\end{align}
for $\frac{(2\alpha+1)(1+\alpha)}{2\alpha^2+\alpha+1/2}<p<\frac{2(1+\alpha)(2\alpha+1)}{4\alpha+1}$.

The case $i=1$ of estimate \eqref{Ga i=1,2,3} is just a combination of  Lemma \ref{var of the 1.1} and Theorem 1 in \cite{FGP},
\begin{align*}
\|\lambda[N^d_\lambda(\mathscr T^1f)]^{1/2}\|_{L^p(\mathbb R^n)}\le
C_p\|T_{\Omega}f\|_{L^p(\mathbb R^n)}\leq C_p\|f\|_{L^p(\mathbb R^n)},
\end{align*}
for all $(2+2\alpha)/(1+2\alpha)<p<2+2\alpha$.

Next, we consider the case $i=3$ of estimate \eqref{Ga i=1,2,3}. For $\frac{(2\alpha+1)(1+\alpha)}{2\alpha^2+\alpha+1/2}<p<\frac{2(1+\alpha)(2\alpha+1)}{4\alpha+1}$, there are $\theta_p\in(\frac2{2\alpha+1},1]$ and $p_1\in(\frac{2+2\alpha}{1+2\alpha},2+2\alpha)$ such that
 $$
 \frac1p=\frac{\theta_p}2+\frac{1-\theta_p}{p_1}.
 $$

 The following known result can be found in \cite[p.80]{FGP} and \cite[p.461]{GS98},
\begin{align}\label{2 estimates rough f}
\|G_sf\|_{L^2(\mathbb R^n)}\leq C(1+s)^{-\alpha-1/2}\|f\|_{L^2(\mathbb R^n)},\  \ s>0,
\end{align}
where
$$G_sf(x)=\big(\sum_k\big|[(\delta_0-\phi_k)\ast\nu_{k+s}]\ast
f(x)\big|^2\big)^{1/2}.$$
Note that $|(\delta_0-\phi)^\wedge(\xi)|\le C\min\{1,|\xi|\}$, $|\hat{\nu_s}(\xi)|\le C$, $|\hat{\nu_s}(\xi)|\le C|2^s\xi|$ when $|2^s\xi|\le 2$ and $|\hat{\nu_s}(\xi)|\le C(\ln|2^s\xi|)^{-1-\alpha}$ when $|2^s\xi|\ge 2$. Then we get
\begin{align*}
|[(\delta_0-\phi_k)\ast\nu_{k+s}]^\wedge(\xi)|\le\bigg\{ \begin{array}{ll}
 C|2^k\xi|,&\text{for}\ |2^k\xi|\le2,\\
C(\ln|2^k\xi|)^{-1-\alpha},&\text{for}\ |2^k\xi|\ge2,
\end{array}
\end{align*}
uniformly in $s>0$. Similarly, for $1<p_2<\infty$
$$
\|\sup_{k}||[(\delta_0-\phi_k)\ast\nu_{k+s}]|\ast f|\|_{L^{p_2}(\mathbb R^n)}\le C\|f\|_{L^{p_2}(\mathbb R^n)}.$$ By using the same argument in \cite[p.461]{GS98} or \cite[p.77]{FGP}, we have uniformly in $s>0$  $$ \|G_sf\|_{L^{p_1}(\mathbb
R^n)}\leq C_{p_1}\|f\|_{L^{p_1}(\mathbb R^n)},$$
which, interpolated with \eqref{2 estimates rough f}, implies
$$
\|G_sf\|_{L^{p}(\mathbb
R^n)}\leq C_{p}(1+s)^{-\theta_p(\alpha+1/2)}\|f\|_{L^{p}(\mathbb R^n)}.
$$
Finally, we sum the above $L^p$ estimates over $s>0$ and get
\begin{align*}
\|\lambda[N^{d}_{\lambda}(\mathscr T^3 f)]^{1/2}\|_{L^p(\mathbb
R^n)}&\leq \sum_{s>0}\|G_sf\|_{L^p(\mathbb R^n)}\leq C_{p}\sum_{s>0}(1+s)^{-\theta_p(\alpha+1/2)}\|f\|_{L^{p}(\mathbb R^n)}\leq C_p\|f\|_{L^p(\mathbb R^n)}.
\end{align*}

The case $i=2$  of estimate \eqref{Ga i=1,2,3}  can be treated similarly. For $l\le0$, we set
$$G_lf(x)=\big(\sum_k\big|[\phi_k\ast\nu_{k+l}]\ast
f(x)\big|^2\big)^{1/2}.$$ Note that $\hat{\phi}$
vanishes for $|\xi|\le 2$ and $l\le0$. Using above estimates of $\hat{\nu_s}$, we have
\begin{align*}
\sum_k\big|\widehat{\phi}(2^k\xi)\big|^2\big|\hat{\nu}(2^{l+k}\xi)\big|^2
&\le\sum_{\frac1{|\xi|}<2^k\le\frac1{2^l|\xi|}}(2^{l+k}|\xi|)^2(2^k|\xi|)^{-4}
+\sum_{2^k\le\frac1{|\xi|}}2(2^{l+k}|\xi|)^2\le C2^{2l}.
\end{align*}
Then, $\|G_lf\|_{L^2(\mathbb R^n)}\le C2^l\|f\|_{L^2(\mathbb R^n)}$.
$\|G_lf\|_{L^{p_1}(\mathbb R^n)}\le C\|f\|_{L^{p_1}(\mathbb R^n)}$ can be
proved as (\ref{Gslm}). Interpolating and summing over $l\le0$,
\begin{align*}
\|\lambda[N^d_\lambda(\mathscr T^2f)]^{1/2}\|_{L^p(\mathbb R^n)}\le \sum_{l\le0}\|G_lf\|_{L^p(\mathbb R^n)}\le \sum_{l\le0}2^{\theta_pl}\|f\|_{L^p(\mathbb R^n)}\le
C_p\|f\|_{L^p(\mathbb R^n)}
\end{align*}
for $1<p<\infty$.

\subsection{Proof of Proposition \ref{pro:short Omega}}
In the present case, the rotation method seems not to work. Instead we appeal to the vector-valued singular integral operator theory. That the underlying kernel is rough brings a lot of  trouble, but homogeneity of the kernel is taken advantage of in getting $L^2$-estimate and H\"ormander condition. Let us start the proof.

For $t\in[1,2]$, we define $\nu_{0,t}$ as
$$\nu_{0,t}(x)=\frac{\Omega(x')}{|x|^n}\chi_{\{t\leq|x|\leq2\}}(x)$$
and $\nu_{j,t}(x)={2^{-jn}}\nu_{0,t}(2^{-j}x)$ for $j\in\mathbb{Z}$.
For $k\in\mathbb{Z}$, we define $\Phi_{j,k}f$ by
$\widehat{\Phi_{j,k}f}(\xi)=\varphi(2^{j-k}\xi)\hat{f}(\xi)$, where $\varphi$ is a Schwartz function such that $supp\ (\hat{\varphi})\subset\{1/2\le|\xi|\le 2\}$ and
$\sum_{k\in\mathbb{Z}}\varphi(2^{-k}\xi)=1$ for all
$\xi\in\mathbb{R}^n\setminus\{0\}$. Observe that
$V_{2,j}(\mathcal{T}f)(x)$ is just the strong  $2$-variation function of the family
$\{\nu_{j,t}\ast f(x)\}_{t\in[1,2]}$, hence
\begin{align*}
S_{2}(\mathcal{T}f)(x)&=\Big(\sum_{j\in\mathbb{Z}}|V_{2,j}(\mathcal{T}f)(x)|^2\Big)^{\frac{1}{2}}
=\Big(\sum_{j\in\mathbb{Z}}\|\{\nu_{j,t}\ast f(x)\}_{t\in[1,2]}\|_{V_2}^2\Big)^{\frac{1}{2}}\\
&\leq\sum_{k\in\mathbb{Z}}\Big(\sum_{j\in\mathbb{Z}}\|\{\nu_{j,t}\ast
\Phi_{j,k}f(x)\}_{t\in[1,2]}\|_{V_2}^2\Big)^{\frac{1}{2}}:=
\sum_{k\in\mathbb{Z}}S_{2,k}(\mathcal{T}f)(x).
\end{align*}

The desired estimate \eqref{short O p} will follow from the following two estimates by
interpolation and summation over $k$,
\begin{align}\label{2 estimates variation}
\|S_{2,k}(\mathcal{T}f)\|_{L^2(\mathbb R^n)}\leq
C(1+|k|)^{-\frac{1+\alpha}{2}}\|f\|_{L^2(\mathbb R^n)}
\end{align}
and
\begin{align}\label{p estimates variation}
\|S_{2,k}(\mathcal{T}f)\|_{L^{p}(\mathbb R^n)}\leq
C_{p}(1+|k|)\|f\|_{L^{p}(\mathbb R^n)},
\end{align}
for all $1<p<\infty$

To deal with (\ref{2 estimates variation}), we borrow the fact $\|\mathfrak{a}\|_{V_2}\le\|\mathfrak{a}\|_{L^2}^{1/2}\|\mathfrak{a}'\|_{L^2}^{1/2}$, where $\mathfrak{a}'=\{\frac{d}{dt}a_t:t\in\mathbb R\}$.
It is a special case of (39) in \cite{JSW08}. Then,
\begin{align*}
[S_{2,k}(\mathcal{T}f)(x)]^2\leq\sum_{j\in\mathbb{Z}}\|\nu_{j,t}\ast
\Phi_{j,k}f(x)\|_{L_t^2([1,2])}\|\frac{d}{dt}[\nu_{j,t}\ast
\Phi_{j,k}f(x)]\|_{L^2_t([1,2])}.
\end{align*}
By Cauchy-Schwarz inequality, we have
\begin{equation}\nonumber
\|S_{2,k}(\mathcal{T}f)\|^2_{L^2(\mathbb R^n)}\le
\Big\|\Big(\sum_{j\in\mathbb{Z}}\|\nu_{j,t}\ast
\Phi_{j,k}f\|^2_{L^2_t([1,2])}\Big)^{1/2}\Big\|_{L^2(\mathbb
R^n)}\Big\|\Big(\sum_{j\in\mathbb{Z}}\|\frac{d}{dt}[\nu_{j,t}\ast
\Phi_{j,k}f]\|^2_{L^2_t([1,2])}\Big)^{1/2}\Big\|_{L^2(\mathbb R^n)}.
\end{equation}

To deal with the first term on the right-hand side, we need the following
estimates
\begin{align}\label{f of v a}
|\widehat{\nu_{j,t}}(\xi)|\leq C\left\{\begin{array}{cc}\ln(|2^j\xi|)^{-1-\alpha}& \;\mathrm{if}\; 2^j|\xi|\geq 2\\
       2^j|\xi|& \;\mathrm{if}\; 2^j|\xi|\leq 2\end{array}\right.,
\end{align}
uniformly in $t\in[1,2]$,
which have been essentially proved in \cite{GS98}. By Plancherel's theorem,
\begin{align*}
\Big\|\Big(\sum_{j\in\mathbb{Z}}\|\nu_{j,t}\ast
\Phi_{j,k}f\|^2_{L^2_t([1,2])}\Big)^{1/2}\Big\|_{L^2(\mathbb
R^n)}^2&=\sum_{j\in\mathbb{Z}}\int_1^2\int_{\mathbb
R^n}|\widehat{\nu_{j,t}}(\xi)|^2|\varphi(2^{j-k}\xi)\hat{f}(\xi)|^2d\xi
dt\\
&\le C(1+|k|)^{-2(1+\alpha)}\|f\|^2_{L^2(\mathbb R^n)}.
\end{align*}

In order to treat the second term, we need an elementary fact. That is, for any Schwartz function $h$,
\begin{equation}\label{dft}
|\big(\frac{d}{dt}[\nu_{j,t}\ast h]\big)^\wedge(\xi)|\le C\|\Omega\|_{L^1(\mathbf S^{n-1})}|\hat{h}(\xi)|
\end{equation}
uniformly in $t\in[1,2]$.
Indeed, by spherical coordinate transformation, a trivial calculation shows
\begin{align*}
\frac{d}{dt}[\nu_{j,t}\ast h(x)]&=\frac{d}{dt}\bigg[\int_{2^jt<|y|\le 2^{j+1}}\frac{\Omega(y')}{|y|^n}h(x-y)dy\bigg]=\frac{d}{dt}\bigg[\int_{\mathbf S^{n-1}}\Omega(y')\int_{2^jt}^{2^{j+1}}\frac{1}{r}h(x-ry')drd\sigma(y')\bigg]\\
&=\frac{1}{t}\int_{\mathbf S^{n-1}}\Omega(y')h(x-2^jty')d\sigma(y').
\end{align*}
Note that $t\in[1,2]$,
\begin{align*}
|\big(\frac{d}{dt}[\nu_{j,t}\ast h]\big)^\wedge(\xi)|&\le C\big|\int_{\mathbb R^n}e^{-2\pi ix\cdot\xi}\int_{\mathbf S^{n-1}}\Omega(y')h(x-2^jty')d\sigma(y')dx\big|\\
&\le C\int_{\mathbf S^{n-1}}|\Omega(y')|\big|\int_{\mathbb R^n}e^{-2\pi ix\cdot\xi}h(x-2^jty')dx\big|d\sigma(y')\\
&\le C\|\Omega\|_{L^1(\mathbf S^{n-1})}|\hat{h}(\xi)|.
\end{align*}
By Plancherel's theorem and \eqref{dft}, we have
\begin{align*}
\Big\|\Big(\sum_{j\in\mathbb{Z}}\|\frac{d}{dt}[\nu_{j,t}\ast
\Phi_{j,k}f]\|^2_{L^2_t([1,2])}\Big)^{\frac12}\Big\|_{L^2(\mathbb
R^n)}^2&\le C\sum_{j\in\mathbb{Z}}\int_{\mathbb
R^n}|\varphi(2^{j-k}\xi)\hat{f}(\xi)|^2d\xi \le
C\|f\|^2_{L^2(\mathbb R^n)}.
\end{align*}
Then we get $L^2$ estimate (\ref{2 estimates variation}).

To achieve the estimate (\ref{p estimates variation}), we use the
vector-valued singular integral theory for convolution
operators. For $t\in[1,2]$ and $j\in\mathbb Z$, we define
$K_k(x)=\{\nu_{j,t}\ast\check{\varphi}_{j-k}(x)\}_{j,t}$,
where $\check{\varphi}_{j-k}(y)=2^{n(k-j)}\check{\varphi}(2^{k-j}y)$ and $K_k$ takes value in the Banach space
$$B=\{a(j,t):\;\|a\|_B:=(\sum_j\|a\|^2_{V_2})^{1/2}<\infty\}.$$
Obviously, $S_{2,k}(\mathcal{T}f)=\|K_k\ast f\|_B$. By the vector-valued
singular integral operator theory,  we have the $L^p$ estimates
$$\|S_{2,k}(\mathcal{T}f)\|_{L^p(\mathbb R^n)}\leq C_p(M_k+N_k)\|f\|_{L^p(\mathbb R^n)},\ \ 1<p<\infty,$$
 where $M_k=(1+|k|)^{-\frac{1+\alpha}{2}}$ is the $L^2$-bound in (\ref{2 estimates variation}) and $N_k$ is any upper
bound for
\begin{align*}
\sup_{y\in\mathbb{R}^n}\int_{|x|>2|y|}\|K_k(x-y)-K_k(x)\|_Bdx.
\end{align*}
To get the desired upper bound $C(1+|k|)$ for above integral, we need the
following facts
$$\|a\|_B\leq(\sum_j\|a\|^2_{V_1})^{1/2}\leq
\sum_j\int^2_1|\frac{d}{dt}a(j,t)|dt
$$
and
$$\frac{d}{dt}\big(\nu_{j,t}\ast\check{\varphi}_{j-k}\big)(x)=\int_{\mathbf S^{n-1}}\frac{\Omega(y')}{t}\check{\varphi}_{j-k}(x-2^jty')d\sigma(y').$$
Therefore,
\begin{align*}
&\int_{|x|>2|y|}\|K_k(x-y)-K_k(x)\|_Bdx\\
\leq&\sum_j\int^2_1\int_{|x|>2|y|}\bigg|\frac{d}{dt}\big(\nu_{j,t}\ast\check{\varphi}_{j-k}\big)(x-y)-\frac{d}{dt}
\big(\nu_{j,t}\ast\check{\varphi}_{j-k}\big)(x)\bigg|dxdt\\
\le&\int^2_1\int_{\mathbf
S^{n-1}}|{\Omega(u')}|\sum_j\int_{|x|>2|y|}|\check{\varphi}_{j-k}(x-y-2^jtu')-\check{\varphi}_{j-k}(x-2^jtu')|dxd\sigma(u')
dt.
\end{align*}
To complete the proof, it suffices to show
$$\sum_j\int_{|x|>2|y|}|\check{\varphi}_{j-k}(x-y-2^jtu')-\check{\varphi}_{j-k}(x-2^jtu')|dx\leq C(1+|k|)$$
uniformly in $k$, $t$ and $u'$. Without loss of generality, we assume
$t=1$ and $u'=\mathbf{1}$. For any fixed $y\in\mathbb R^n$ and $k\in\mathbb Z$, we
divide the sum into two parts $I:=\sum_{2^{j+1}<|y|}$ and $II:=
\sum_{2^{j+1}\geq|y|}$. For the first part, we treat it as follows
\begin{align*}
I&=\sum_{2^{j+1}<|y|}\int_{|x|>\frac{2|y|}{2^{j-k}}}|\check{\varphi}(x-\frac{y+2^j\mathbf{1}}{2^{j-k}})-\check{\varphi}(x-2^k\mathbf{1})|dx\\
&\leq2\sum_{2^{j+1}<|y|}\int_{|x|>\frac{|y|}{2^{j-k+1}}}|\check{\varphi}(x)|dx\leq 2\int_{|x|>2^k}|\check{\varphi}(x)|[\sum_{\frac{2^k|y|}{|x|}<2^{j+1}<|y|}1]dx\\
&\leq
2\int_{|x|>2^k}|\check{\varphi}(x)|\log\ {\frac{|x|}{2^{k-1}}}dx\leq
C.
\end{align*}
For the second part, we use the mean value theorem
\begin{align*}
&II=\sum_{2^{j+1}\geq|y|}\int_{|x|>\frac{2|y|}{2^{j-k}}}|\check{\varphi}(x-\frac{y+2^j\mathbf{1}}{2^{j-k}})-\check{\varphi}(x-2^k\mathbf{1})|dx\\
&\leq C\sum_{2^{j+1}\geq|y|}\min\{1,2^{k-j}|y|\}\leq C(1+|k|).
\end{align*}
This completes the proof of (\ref{p estimates variation}).

\section{Proof of Theorem \ref{thm:maximal}}
As in the proof of Theorem \ref{thm:LlogL}, to show Theorem \ref{thm:maximal}(i), it suffices to prove dyadic $\lambda$-jump estimate and short $2$-variation estimate respectively. Also in the course of proving strong $q$-variation estimate ($q>2$) by the rotation method, we can get short $2$-estimate. Let us start with the dyadic $\lambda$-jump estimate.

\begin{proposition}\label{pro:dyadic maximal}
Let $\Omega\in H^1(\mathbf S^{n-1})$ or $L(\log^+\!\!L)^{1/2}(\mathbf S^{n-1})$. Then
\begin{align}\label{NdM}
\|\lambda\sqrt{N^d_{\lambda}(\mathcal Mf)}\|_{L^p(\mathbb R^n)}\le
C_p\|f\|_{L^p(\mathbb R^n)}, \ 1<p<\infty,
\end{align}
uniformly in $\lambda>0$.
\end{proposition}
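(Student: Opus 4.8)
The plan is to follow the same scheme used for the truncated singular integrals in Section~2, namely to reduce the dyadic $\lambda$-jump estimate for the averaging family $\mathcal M=\{M_t\}_{t>0}$ to a family of square-function (Littlewood--Paley type) estimates via a decomposition of $\Omega$ into pieces with Fourier-analytic control, and then to bound each piece on $L^2$ and on $L^p$ so that the resulting constants can be summed. First I would introduce the measures $\sigma_j$ defined by $\sigma_j\ast f(x)=2^{-jn}\int_{|y|<2^j}\Omega(y')f(x-y)\,dy$, so that $M_{2^k}f=\sigma_k\ast f$ (no telescoping series is needed here, unlike the singular-integral case, since $M_{2^k}$ is itself a single average rather than a tail sum). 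The key classical facts are that $\widehat{\sigma_0}(\xi)\le C\min\{1,|\xi|^{-1/2}\}$ when $\Omega\in L^2(\mathbf S^{n-1})$ (Fourier decay of the surface-carried measure, via the method of \cite{DR86}), and the analogous statement for the normalized pieces of a rough kernel; this $|\xi|^{-1/2}$ decay is exactly why $L(\log^+\!L)^{1/2}$, rather than $L\log^+\!L$, is the natural class for averaging operators.

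For $\Omega\in L(\log^+\!L)^{1/2}(\mathbf S^{n-1})$, I would split $\Omega=\Omega_0+\sum_{m\in\Gamma}\|\Omega\|_{L^1(E_m)}\Omega_m$ exactly as in Lemma~\ref{dec of O} (with the sets $E_m=\{2^{m-1}\le|\Omega|<2^m\}$), noting that the relevant summability condition now becomes $\sum_{m\in\Gamma}\sqrt m\,\|\Omega\|_{L^1(E_m)}\lesssim\|\Omega\|_{L(\log^+\!L)^{1/2}(\mathbf S^{n-1})}$. Writing $\sigma_j^{(m)}$ for the average against $\Omega_m$, one has $|\widehat{\sigma_j^{(m)}}(\xi)|\le C\min\{1,\|\Omega_m\|_{L^2}|2^j\xi|^{-1/2}\}$ and, after interpolating with the trivial bound $\|\Omega_m\|_{L^1}\le 2$ as in \eqref{f of vsm}, a decay of the form $|\widehat{\sigma_j^{(m)}}(\xi)|\le C\min\{1,|2^j\xi|^{-c/m}\}$. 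Then the dyadic jump is pointwise dominated, via the $BP$-type inequality \eqref{contr ineq}, by $\big(\sum_k|\sigma_k^{(m)}\ast f|^2\big)^{1/2}$ (or rather by a discrete Marcinkiewicz-type square function $\big(\sum_k|(\sigma_k^{(m)}-\phi_k\ast\sigma_k^{(m)})\ast f|^2\big)^{1/2}$ after subtracting a smooth piece, whose jump is handled by Lemma~\ref{var of the 1.1}); Plancherel gives an $L^2$ gain $2^{-cs/m}$-type decay, and Lemma~\ref{bs0}, Lemma~\ref{bs} together with the $L^p$-boundedness of the rough maximal operator $M^*$ (which holds since $\Omega_m\in L^2$) give an $L^p$ bound growing at most like $\sqrt m$; interpolating and using Lemma~\ref{bs1} lets me sum over $m\in\Gamma$ against $\|\Omega\|_{L^1(E_m)}$ and conclude.

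For $\Omega\in H^1(\mathbf S^{n-1})$, I would instead use the atomic decomposition of $H^1(\mathbf S^{n-1})$: write $\Omega=\sum_i\lambda_i b_i$ with $\sum_i|\lambda_i|\lesssim\|\Omega\|_{H^1}$ and each $b_i$ a regular $H^1$-atom supported in a cap. The point is that an atom $b$ supported in a cap of radius $\rho$ around $\xi_0$ satisfies, by its cancellation, a Fourier estimate of the form $|\widehat{\sigma_j^{b}}(\xi)|\le C\min\{1,(2^j|\xi|)^{-1/2},2^j|\xi|\,\rho^{-?}\}$ — more precisely one gets a uniform (in the atom) bound $|\widehat{\sigma_j^{b}}(\xi)|\le C\min\{1,|2^j\xi|^{-\delta}\}$ for some fixed $\delta>0$ together with $L^p$-control of the associated square function uniform over atoms, using that $\|b\|_{L^2}$ is controlled by the atom size. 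Summing the contributions of the atoms against $|\lambda_i|$ then finishes the $H^1$ case; this is morally the same mechanism as the $L(\log^+\!L)^{1/2}$ case but with atoms playing the role of the normalized pieces $\Omega_m$. (Alternatively one can quote the corresponding maximal-function and $g$-function estimates for rough kernels in $H^1$ from \cite{GS99} or \cite{LDY}.) The main obstacle, as in Section~2, is quantitative bookkeeping: one must extract the sharp $s$- and $m$- (resp. atom-)dependence of every constant — in the $L^2$ decay, in the $L^p$ square-function bound via Lemma~\ref{bs0}, and in the maximal estimate — so that the double sum over the scale gap $s$ (here encoded simply through the smooth cutoff, since there is no telescoping tail) and over the decomposition index converges to a multiple of $\|\Omega\|_{L(\log^+\!L)^{1/2}}$ or $\|\Omega\|_{H^1}$; the application of Lemma~\ref{bs1} with $t=1/m$ is the device that makes this summation work, exactly as in the proof of Proposition~\ref{pro:dyadic LlogL}.
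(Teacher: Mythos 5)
Your skeleton coincides with the paper's: set $\sigma_{2^k}(y)=2^{-kn}\chi_{\{|y|<2^k\}}(y)\Omega(y')$, dominate $\lambda^2N^d_\lambda(\mathcal Mf)$ pointwise by $\sum_k|f\ast\sigma_{2^k}|^2$, and reduce \eqref{NdM} to $L^p$ bounds for this discrete rough Marcinkiewicz square function (the paper then simply quotes \cite{DFP} for $H^1(\mathbf S^{n-1})$ and \cite{AACP} for $L(\log^+\!L)^{1/2}(\mathbf S^{n-1})$). However, there is a genuine gap at the very first step: Proposition \ref{pro:dyadic maximal} does not assume the cancellation \eqref{can of O}, and without it the square-function domination is vacuous, since $\widehat{\sigma_{2^k}}(\xi)\to c\neq0$ as $2^k|\xi|\to0$ whenever $\int_{\mathbf S^{n-1}}\Omega\,d\sigma\neq0$, so that $\sum_k|f\ast\sigma_{2^k}(x)|^2=\infty$ on a set of positive measure. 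Your decompositions do not repair this: in Lemma \ref{dec of O} each $\Omega_m$ has mean zero but $\Omega_0$ inherits the mean of $\Omega$, and the atomic decomposition of $H^1(\mathbf S^{n-1})$ contains exceptional (constant) atoms with no cancellation. The paper's proof begins precisely with this reduction: write $\Omega=\Omega_0+C(\Omega,n)$ with $\Omega_0$ of mean zero and handle the constant part by the dyadic jump inequality for ordinary averages from \cite{JKRW98}. Your parenthetical alternative, subtracting $\phi_k\ast\sigma_{2^k}\ast f$ and invoking Lemma \ref{var of the 1.1}, does not work as stated, because that lemma treats $\{\phi_k\ast g\}_k$ for a single fixed function $g$, whereas here the measure $\sigma_{2^k}$ varies with $k$.

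A second, quantitative, issue concerns your bookkeeping for $\Omega\in L(\log^+\!L)^{1/2}(\mathbf S^{n-1})$ when $p\neq2$. The per-piece $L^2$ bound is indeed $O(\sqrt m)$ (Plancherel plus $\sum_k\min\{|2^k\xi|^2,|2^k\xi|^{-2c/m}\}\lesssim m$), but Lemma \ref{bs0} applied with $\gamma\sim1/m$, together with Lemma \ref{bs1}, gives an $L^{p_1}$ constant of order $m$, and interpolating these two yields $m^{1-\theta_p/2}$, not $O(\sqrt m)$; summing against $\|\Omega\|_{L^1(E_m)}$ would then require $\sum_m m^{1-\theta_p/2}\|\Omega\|_{L^1(E_m)}<\infty$, which is more than membership in $L(\log^+\!L)^{1/2}$. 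So, as sketched, your argument covers only $p=2$ (or a smaller kernel class). To obtain the genuine $\sqrt m$ growth of the $L^p$ constants one needs the extra device of \cite{AACP} (a Littlewood--Paley decomposition adapted to the lacunary scales $2^{jm}$, so that the $m$-dependence enters only through the $\ell^2$ count of blocks), or one can simply quote \cite{AACP} and \cite{DFP} for the square-function estimates, as the paper does. With the mean-zero reduction added and the square-function bound either quoted or proved via the adapted-scale argument, your proof becomes correct and is essentially the paper's.
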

\begin{proof}  We assume $\Omega$ satisfies the cancelation condition (\ref{can of O}). Otherwise, we write
\begin{align*}
\Omega(x')&=[\Omega(x')-\frac1{\omega_{n-1}}\int_{\mathbf S^{n-1}}\Omega(y')d\sigma(y')]+\frac1{\omega_{n-1}}\int_{\mathbf S^{n-1}}\Omega(y')d\sigma(y')\\
&:=\Omega_0(x')+C(\Omega,n),
\end{align*}
where $\omega_{n-1}$ denotes the area of $\mathbf S^{n-1}$. Thus,
\begin{align*}
M_tf(x)=\frac1{t^n}\int_{|y|<t}\Omega_0(y')f(x-y)dy+C(\Omega,n)\frac1{t^n}\int_{|y|<t}f(x-y)dy.
\end{align*}
\eqref{NdM} has been established in \cite{JKRW98} with $\Omega=1$. Define  $\sigma_{2^k}(y)={2^{-kn}}\chi_{\{|y|<2^k\}}(y)\Omega(y')$, then $M_{2^k}f=\sigma_{2^k}\ast f$.
 The pointwise domination
\begin{equation}\label{Pd}
\lambda^2N^d_{\lambda}(\mathcal Mf)=\lambda^2N_{\lambda}(\{M_{2^k}f\}_k)\leq \sum_{k}|f\ast\sigma_{2^k}|^2
\end{equation}
 reduces the desired estimate to
\begin{equation}\label{Sq es}
\|(\sum_{k}|f\ast\sigma_{2^k}|^2)^{1/2}\|_{L^p(\mathbb R^n)}\leq C_p\|f\|_{L^p(\mathbb R^n)},
\end{equation}
(\ref{Sq es}) can be showed in the similar way as that in
\cite[p.597]{DFP} for $\Omega\in H^1(\mathbf S^{n-1})$ and \cite[p.698]{AACP} for $\Omega\in L(\log^+L)^{1/2}(\mathbf S^{n-1})$. We omit those details for simplicity.
\end{proof}

\begin{remark}
The square function having appeared in the proof of Proposition \ref{pro:dyadic maximal} is actually a discrete analogue of the Marcinkiewicz integral with rough kernels. In \cite{Wal}, Walsh proved that Marcinkiewicz integrals may not be $L^2(\mathbb R^n)$-bounded if the kernel $\Omega\in L(\log^+L)^{1/2-\varepsilon}(\mathbf S^{n-1})$ for $0<\varepsilon<1/2$. Hence it is hopeless to obtain (\ref{NdM}) by using the pointwise domination (\ref{Pd}) with $\Omega\in L^1(\mathbf S^{n-1})$.
\end{remark}

To apply the rotation method, we need the following result in one dimension.

\begin{lemma}\label{pro:1-dim maximal}
Let $\mathscr{M}=\{\mathfrak{M}_t\}_{t>0}$ with $\mathfrak{M}_t$ defined as
\begin{equation}\nonumber
\mathfrak{M}_tf(x)=\frac1{t^n}\int_0^tf(x-s)s^{n-1}ds.
\end{equation}
Then for $1<p<\infty$,
\begin{align*}
\|\lambda\sqrt{N_{\lambda}(\mathscr Mf)}\|_{L^p(\mathbb R)}\le
C_p\|f\|_{L^p(\mathbb R)},
\end{align*}
uniformly in $\lambda>0$. Whence $\|V_q(\mathscr Mf)\|_{L^p(\mathbb R)}\le C_p\|f\|_{L^p(\mathbb R)}$ for $q>2$ and $1<p<\infty$.
\end{lemma}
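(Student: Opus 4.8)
The plan is to reduce everything, via the one-line identity obtained by the change of variables $s=rt$ followed by an integration by parts,
$$\mathfrak{M}_tf(x)=\int_0^1 r^{n-1}f(x-rt)\,dr=A_tf(x)-(n-1)\int_0^1 r^{n-1}A_{rt}f(x)\,dr ,$$
where $A_sf(x)=\tfrac1s\int_0^sf(x-u)\,du$ is the ordinary one-sided average on $\mathbb{R}$, whose jump and variation inequalities are part of the classical one-dimensional theory (\cite{Bou89}, \cite{JKRW98}, \cite{JRW00}). By Lemma \ref{lem:convert lemma} (applied on $\mathbb{R}$) it then suffices to prove, for each fixed $1<p<\infty$, the short $2$-variation estimate $\|S_2(\mathscr{M}f)\|_{L^p(\mathbb{R})}\le C_p\|f\|_{L^p(\mathbb{R})}$ and the dyadic $\lambda$-jump estimate $\sup_{\lambda>0}\|\lambda\sqrt{N^d_\lambda(\mathscr{M}f)}\|_{L^p(\mathbb{R})}\le C_p\|f\|_{L^p(\mathbb{R})}$.

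For the short $2$-variation I would first obtain a pointwise comparison with $\mathscr{A}$. Put $B_tf=(n-1)\int_0^1 r^{n-1}A_{rt}f\,dr$, so that $\mathfrak{M}_tf=A_tf-B_tf$. Since $V_{2,j}$ is a sub-additive seminorm, Minkowski's integral inequality gives $V_{2,j}(\{B_tf\})(x)\le(n-1)\int_0^1 r^{n-1}\,\|\{A_sf(x)\}_{s\in[r2^j,r2^{j+1}]}\|_{V_2}\,dr$; each interval $[r2^j,r2^{j+1}]$ with $r\in(0,1]$ meets at most two consecutive dyadic blocks, so the inner quantity is $\le C\big(V_{2,a}(\mathscr{A}f)(x)^2+V_{2,a+1}(\mathscr{A}f)(x)^2\big)^{1/2}$ with $a=a(r,j)=\lfloor j+\log_2 r\rfloor$, and for fixed $r$ the map $j\mapsto a(r,j)$ is a bijection of $\mathbb{Z}$. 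Squaring, using Cauchy--Schwarz in $r$ against the weight $r^{n-1}$, and summing over $j$ then yields $S_2(\{B_tf\})(x)\le C_n\,S_2(\mathscr{A}f)(x)$, hence $S_2(\mathscr{M}f)(x)\le C_n\,S_2(\mathscr{A}f)(x)$; taking $L^p$ norms and invoking the (well-established) short-variation inequality for $\mathscr{A}$ on $\mathbb{R}$ finishes this part. The identical manipulation performed with $\|\cdot\|_{V_q}$ over $(0,\infty)$, together with the dilation-invariance $\|\{A_{rt}f(x)\}_{t>0}\|_{V_q}=V_q(\mathscr{A}f)(x)$, gives the pointwise bound $V_q(\mathscr{M}f)(x)\le\frac{2n-1}{n}V_q(\mathscr{A}f)(x)$, whence the stated $q$-variation inequality for $q>2$ (alternatively, it follows from the jump inequality by Lemma \ref{lemma1}).

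For the dyadic $\lambda$-jump I would argue exactly as in the proof of Lemma \ref{var of the 1.1}. With $\psi(u)=u^{n-1}\chi_{(0,1]}(u)$ the kernel of $\mathfrak{M}_t$ one has $\int_{\mathbb{R}}\psi=1/n$; letting $\mathbb{E}_k$ denote the dyadic conditional expectation on $\mathbb{R}$, write $\mathfrak{M}_{2^k}f=\tfrac1n\mathbb{E}_kf+\big(\mathfrak{M}_{2^k}f-\tfrac1n\mathbb{E}_kf\big)$. By subadditivity of $N^d_\lambda$ it is enough to bound the dyadic jump of the martingale $\{\tfrac1n\mathbb{E}_kf\}_k$, which is classical (\cite{PiXu88}), and the square function $\big(\sum_k|\mathfrak{M}_{2^k}f-\tfrac1n\mathbb{E}_kf|^2\big)^{1/2}$, which dominates $\lambda\sqrt{N^d_\lambda(\{\mathfrak{M}_{2^k}f-\tfrac1n\mathbb{E}_kf\}_k)}$ pointwise because all indices appearing in a dyadic jump are distinct. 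Comparing with a smooth dyadic average $\phi_k$ ($\widehat\phi$ compactly supported, $\widehat\phi(0)=1$) splits this square function into $\big(\sum_k|\mathfrak{M}_{2^k}f-\tfrac1n\phi_k\ast f|^2\big)^{1/2}$ and $\tfrac1n\mathcal{S}f$, $\mathcal{S}f=\big(\sum_k|\phi_k\ast f-\mathbb{E}_kf|^2\big)^{1/2}$; the first is bounded on $L^2$ by Plancherel and the elementary estimates $|\widehat\psi(\xi)|\le C\min\{1,|\xi|^{-1}\}$ and $|\widehat\psi(\xi)-1/n|\le C|\xi|$ (integration by parts, using the boundary value $\psi(1^-)=1$), and the second is already controlled by \eqref{S22} and \eqref{w11}; the passage to $L^p$, $1<p<\infty$, is the vector-valued Calder\'on--Zygmund argument of \cite{JSW08} carried out in the proof of Lemma \ref{var of the 1.1}. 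This square function is the discrete Marcinkiewicz-type integral referred to in the remark following Proposition \ref{pro:dyadic maximal}.

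The only genuinely new point, and the one I expect to require the most care, is the $L^p$ bound for the square function in the dyadic-jump step: although $\psi$ is merely of bounded variation — its jump at $u=1$ being precisely what forces the sharp $|\xi|^{-1}$ decay of $\widehat\psi$ — it shares with $\chi_{(0,1]}$ every structural feature exploited in \cite{JSW08}, so the Calder\'on--Zygmund estimates go through with the weight $u^{n-1}$ as harmless bookkeeping; everything else reduces cleanly to the known one-dimensional inequalities for the ordinary average $\mathscr{A}$.
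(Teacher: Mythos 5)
Your proposal is correct, but it takes a genuinely different route from the paper. The paper's own proof is very short: it writes $\mathfrak{M}_tf=\mu_t\ast f$ with $\mu(s)=s^{n-1}\chi_{[0,1]}(s)$, observes via van der Corput (integration by parts) that $|\hat{\mu}(\xi)|\le C|\xi|^{-1}$, and then simply invokes Theorem 1.1 of \cite{JSW08} for the dyadic $\lambda$-jump inequality and Lemma 6.1 of \cite{JSW08} for the short $2$-variation, concluding with Lemma \ref{lem:convert lemma} and Lemma \ref{lemma1} exactly as you do. You follow the same skeleton (reduction to dyadic jump plus short $2$-variation) but supply the two ingredients by hand: for the short variation you use the exact identity $\mathfrak{M}_tf=A_tf-(n-1)\int_0^1 r^{n-1}A_{rt}f\,dr$ to get a pointwise domination $S_2(\mathscr{M}f)\le C_nS_2(\mathscr{A}f)$ (and likewise $V_q(\mathscr{M}f)\le \frac{2n-1}{n}V_q(\mathscr{A}f)$, a nice by-product the paper does not state), transferring everything to the classical one-sided averages; for the dyadic jump you redo the martingale-comparison argument of Lemma \ref{var of the 1.1}, controlling the square function $\big(\sum_k|\mathfrak{M}_{2^k}f-\frac1n\phi_k\ast f|^2\big)^{1/2}$ by Plancherel and vector-valued Calder\'on--Zygmund theory (your Fourier estimates $|\widehat{\psi}(\xi)|\le C\min\{1,|\xi|^{-1}\}$, $|\widehat{\psi}(\xi)-1/n|\le C|\xi|$ and the H\"ormander condition for the $\ell^2$-valued kernel are all correct, the terms with $2^k\le|y|$ dropping out by support considerations). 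What each approach buys: the paper's argument is essentially two citations once the van der Corput bound is noted, while yours is more self-contained at the level of the weighted kernel and yields pointwise dominations; on the other hand it is longer without being more elementary in substance, since it still rests on the known one-dimensional jump/variation theory for $\mathscr{A}$ and on the same \cite{JSW08} square-function machinery, and the Fourier decay you derive for $\psi$ in the dyadic-jump step is precisely the estimate that lets the paper bypass the whole transference.
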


\begin{proof}
Note that $\mathfrak{M}_tf(x)=\mu_t\ast f(x)$, where $\mu_t(s)=\frac 1t\mu(\frac st)$ and $\mu(s)=\chi_{\{0\le s\le1\}}(s)s^{n-1}$. Van der Corput's lemma implies that
\begin{align*}
|\hat{\mu}(\xi)|\le\bigg|\int_0^1e^{-2\pi ir\xi}r^{n-1}dr\bigg|\le
C|\xi|^{-1}.
\end{align*}
By Theorem 1.1 in \cite{JSW08}, we obtain the following dyadic $\lambda$-jump inequality
\begin{align*}
\|\lambda\sqrt{N^d_{\lambda}(\mathscr Mf)}\|_{L^p(\mathbb R)}\le
C_p\|f\|_{L^p(\mathbb R)},
\end{align*}
uniformly in $\lambda>0$. Further,
by Lemma 6.1 of \cite{JSW08}, we get the $L^p$ boundedness of the short $2$-variation operator
\begin{equation}\label{S2M}
\|S_2(\mathscr Mf)\|_{L^p(\mathbb R)}\le C_p\|f\|_{L^p(\mathbb R)},\
\ 1<p<\infty.
\end{equation}
Above two estimates and Lemma \ref{lem:convert lemma} imply our desired $\lambda$-jump inequality. Clearly, the $q$-variation inequality is a trivial consequence of Lemma \ref{lemma1}.
\end{proof}
Finally, let us turn to the proof of Theorem \ref{thm:maximal}.
\begin{proof}
We first prove Theorem \ref{thm:maximal}(ii) by using the rotation method applied in Section 2.
We define another family of averaging operators $\mathcal M^1=\{M^1_t\}_{t>0}$ with
$$
M^1_tf(x)=\frac1{t^n}\int_0^tf(x-s\mathbf 1)s^{n-1}ds,
$$
where
$\mathbf{1}=(1,0,\cdots,0)$. Recall the rotation of a function is defined as $(R_{\sigma}
f)(x)=f(\sigma x)$. Then, we have
\begin{equation}\nonumber
M_tf(x)=\int_{\mathbf
S^{n-1}}\Omega(y')\frac1{t^n}\int_0^tf(x-sy')s^{n-1}dsd\sigma(y')=\int_{SO(n)}(R_{\sigma^{-1}}M_t^1R_{\sigma} f)(x)\Omega(\sigma\mathbf 1)d\sigma.
\end{equation}
Using Minkowski's inequality and Proposition \ref{pro:1-dim maximal}, we get for $1<p<\infty$
\begin{align*}
\|V_q(\mathcal M f)\|_{L^p(\mathbb R^n)}\le \int_{SO(n)}\|R_{\sigma^{-1}}V_q(\mathcal M^1R_{\sigma} f)\|_{L^p(\mathbb R^n)}|\Omega(\sigma\mathbf
1)|d\sigma\le C_p\|f\|_{L^p(\mathbb R^n)}.
\end{align*}

By Lemma \ref{lem:convert lemma} and Proposition \ref{pro:dyadic maximal}, our desired result Theorem \ref{thm:maximal}(i) follows from $$\|S_2(\mathcal Mf)\|_{L^p(\mathbb R^n)}\le C_p\|f\|_{L^p(\mathbb R^n)},$$ which is consequence of (\ref{S2M}) by rotation method.
\end{proof}

\par

\noindent \textbf{Acknowledgement.} The first author is supported by NSFC (No.11371057, 11471033), SRFDP (No.20130003110003) and the Fundamental Research Funds for the Central Universities (No. 2014KJJCA10). The second author is supported by MINECO: ICMAT Severo Ochoa project SEV-2011-0087 and ERC Grant StG-256997-CZOSQP (EU). The third author is supported by NSFC (No.11371057, 11401175).

\bibliographystyle{amsplain}

\end{document}